\providecommand{\U}[1]{\protect\rule{.1in}{.1in}}
\newtheorem{theorem}{Theorem}
\newtheorem{corollary}[theorem]{Corollary}
\newtheorem{lemma}[theorem]{Lemma}
\newtheorem{proposition}[theorem]{Proposition}
\newtheorem{remark}[theorem]{Remark}
\newenvironment{proof}[1][Proof]{\noindent\textbf{#1.} }{\ \rule{0.5em}{0.5em}}
{\catcode`\@=11\global\let\AddToReset=\@addtoreset
\AddToReset{equation}{section}

\AddToReset{theorem}{section}

\begin{document}

\title{Keller-Osserman estimates for some quasilinear elliptic systems }
\author{Marie-Fran\c{c}oise BIDAUT-VERON\thanks{Laboratoire de Math\'{e}matiques et
Physique Th\'{e}orique, CNRS UMR 6083, Facult\'{e} des SCiences, 37200 Tours
France. E-mail address: veronmf@univ-tours.fr}
\and Marta GARC\'{I}A-HUIDOBRO\thanks{Departamento de Matem\'{a}ticas, Pontificia
Universidad Cat\'{o}lica de Chile, Casilla 306, Correo 22, Santiago de Chile.
E-mail address: mgarcia@mat.puc.cl}
\and Cecilia YARUR\thanks{Departamento de Matem\'{a}tica y C.C., Universidad de
Santiago de Chile, Casilla 307, Correo 2, Santiago de Chile. E-mail address:
cecilia.yarur@usach.cl}}
\date{.}
\maketitle

\begin{abstract}
In this article we study quasilinear systems of two types, in a domain
$\Omega$ of $\mathbb{R}^{N}$ : with absorption terms, or mixed terms:%
\[
(A)\left\{
\begin{array}
[c]{c}%
\mathcal{A}_{p}u=v^{\delta},\\
\mathcal{A}_{q}v=u^{\mu\,},
\end{array}
\right.  \qquad(M)\left\{
\begin{array}
[c]{c}%
\mathcal{A}_{p}u=v^{\delta},\\
-\mathcal{A}_{q}v=u^{\mu\,},
\end{array}
\right.
\]
where $\delta,\mu>0$ and $1<p,q<N,$ and $D=\delta\mu-(p-1)(q-1)>0;$ the model
case is $\mathcal{A}_{p}=\Delta_{p},\mathcal{A}_{q}=\Delta_{q}.$ Despite of
the lack of comparison principle, we prove a priori estimates of
Keller-Osserman type:
\[
u(x)\leq Cd(x,\partial\Omega)^{-\frac{p(q-1)+q\delta}{D}},\qquad v(x)\leq
Cd(x,\partial\Omega)^{-\frac{q(p-1)+p\mu}{D}}.
\]
Concerning system $(M),$ we show that $v$ always satisfies Harnack inequality.
In the case $\Omega=B(0,1)\backslash\left\{  0\right\}  ,$ we also study the
behaviour near 0 of the solutions of more general weighted systems, giving a
priori estimates and removability results. Finally we prove the sharpness of
the results. \bigskip

\textbf{Keywords.}~Quasilinear elliptic systems, a priori estimates, large
solutions, asymptotic behaviour, Harnack inequality.\bigskip

\textbf{Mathematic Subject Classification (2010) }35B40, 35B45, 35J47, 35J92, 35M30

\end{abstract}

\pagebreak

\section{ Introduction\label{intro}}

In this article we study the nonnegative solutions of quasilinear systems in a
domain $\Omega$ of $\mathbb{R}^{N},$ either with absorption terms, or mixed
terms, that is,
\begin{equation}
(A)\left\{
\begin{array}
[c]{c}%
\mathcal{A}_{p}u=v^{\delta},\\
\mathcal{A}_{q}v=u^{\mu\,},
\end{array}
\right.  \qquad(M)\left\{
\begin{array}
[c]{c}%
\mathcal{A}_{p}u=v^{\delta},\\
-\mathcal{A}_{q}v=u^{\mu\,},
\end{array}
\right.  \label{dpm}%
\end{equation}
where
\[
\delta,\mu>0\quad\text{and \quad}1<p,q<N.
\]
The operators are given in divergence form by
\[
\mathcal{A}_{p}u:=div\left[  \mathrm{A}_{p}(x,u,\nabla u)\right]
,\qquad\mathcal{A}_{q}v:=div\left[  \mathrm{A}_{q}(x,v,\nabla v)\right]  ,
\]
where $\mathrm{A}_{p}$ and $\mathrm{A}_{q}$ are Carath\'{e}odory functions. In
our main results, we suppose that $\mathcal{A}_{p}$ \textit{is S-}%
$p$\textit{-C (strongly-}$p$\textit{-coercive)}, that means (see \cite{BP})
\[
\mathrm{A}_{p}(x,u,\eta).\eta\geq K_{1,p}\left\vert \eta\right\vert ^{p}\geq
K_{2,p}\left\vert \mathrm{A}_{p}(x,u,\eta)\right\vert ^{p^{\prime}}%
,\qquad\forall(x,u,\eta)\in\Omega\times\mathbb{R}^{+}\times\mathbb{R}^{N}.
\]
for some $K_{1,p},K_{2,p}>0,$ and similarly for $\mathcal{A}_{q}$. The model
type for $\mathcal{A}_{p}$ is the $p$-Laplace operator
\[
u\longmapsto\Delta_{p}u=div(|\nabla u|^{p-2}\nabla u).
\]
We prove \textit{a priori estimates of Keller-Osserman type} for such
operators, under a natural condition of "superlinearity":
\begin{equation}
\text{ }D=\delta\mu-(p-1)(q-1)>0, \label{D}%
\end{equation}
and we deduce Liouville type results of nonexistence of entire solutions. We
also study the behaviour near $0$ of nonnegative solutions of possibly
weighted systems of the form
\[
(A_{w})\left\{
\begin{array}
[c]{c}%
\mathcal{A}_{p}u=|x|^{a}\,v^{\delta},\\
\mathcal{A}_{q}v=|x|^{b}\,u^{\mu\,},
\end{array}
\right.  \qquad(M_{w})\left\{
\begin{array}
[c]{c}%
\mathcal{A}_{p}u=|x|^{a}\,v^{\delta},\\
-\mathcal{A}_{q}v=|x|^{b}\,u^{\mu\,},
\end{array}
\right.
\]
in $\Omega\backslash\left\{  0\right\}  ,$ where
\[
a,b\in\mathbb{R},\qquad a>-p,\quad b>-q.
\]
In particular we discuss about the \textit{Harnack inequality} for $u$ or
$v$.\medskip\medskip

Recall some classical results in the scalar case. For the model equation with
an absorption term
\begin{equation}
\Delta_{p}u=u^{Q}, \label{del}%
\end{equation}
in $\Omega,$ with $Q>p-1,$ the first estimate was obtained by Keller \cite{Ke}
and Osserman \cite{Os} for $p=2,$ and extended to the case $p\neq2$ in
\cite{VA}: any nonnegative solution $u\in C^{2}\left(  \Omega\right)  $
satisfies
\begin{equation}
u(x)\leq Cd(x,\partial\Omega)^{-p/(Q-p+1)}, \label{ro}%
\end{equation}
where $d(x,\partial\Omega)$ is the distance to the boundary, and $C=C(N,p,Q)$.
For the equation with a source term
\[
-\Delta_{p}u=u^{Q},
\]
up to now estimate (\ref{ro}), valid for any $Q>p-1$ in the radial case, has
been obtained only for $Q<Q^{\ast}$, where $Q^{\ast}=\frac{N(p-1)+p}{N-p}$ is
the Sobolev exponent, with difficult proofs, see \cite{GS}, \cite{BV} in the
case $p=2$ and \cite{SZ} in the general case $p>1$. For $p=2,$ the estimate,
with a universal constant, is not true for $Q=\frac{N+1}{N-3},$ and the
problem is open between $Q^{\ast}$ and $\frac{N+1}{N-3}.\medskip$

Up to our knowledge all the known estimates for systems are related with
systems for which some comparison properties hold, of competitive type, see
\cite{gmr1}, or of cooperative type, see \cite{DDGM}; or with quasilinear
operators in \cite{gm2}, \cite{WuYa}. Problems $(A)$ and $(M)$ have been the
object of very few works because such properties do not hold. The main ones
concern systems $(A_{w})$ and $(M_{w})$ in the linear case $p=q=2,$ see
\cite{BG} and \cite{BG2}; the proofs rely on the inequalities satisfied by the
mean values $\overline{u}$ and $\overline{v}$ on spheres of radius $r,$ they
cannot be extended to the quasilinear case. A radial study of system $(A)$ was
introduced in \cite{GMLS}, and recently in \cite{BGY3}.\medskip

The problem with two source terms
\[
(S)\left\{
\begin{array}
[c]{c}%
-\mathcal{A}_{p}u=|x|^{a}v^{\delta},\\
-\mathcal{A}_{q}v=|x|^{b}u^{\mu},
\end{array}
\right.
\]
was analyzed in \cite{BP}. The results are based on integral estimates, still
valid under weaker assumptions: from \cite{BP}, $\mathcal{A}_{p}$ is called
\textit{W-}$p$\textit{-C (weakly-}$p$\textit{-coercive)} if
\begin{equation}
\mathrm{A}_{p}(x,u,\eta).\eta\geq K_{p}\;\left\vert \mathrm{A}_{p}%
(x,u,\eta)\right\vert ^{p^{\prime}},\qquad\forall(x,u,\eta)\in\Omega
\times\mathbb{R}^{+}\times\mathbb{R}^{N} \label{W}%
\end{equation}
for some $K_{p}>0$; similarly for $\mathcal{A}_{q}.$ When $\delta,\mu<Q_{1},$
where $Q_{1}=\frac{N(p-1)}{N-p}$, punctual estimates were deduced for S-$p$-C,
S-$q$-C operators and it was shown that $u$ and $v$ satisfy the Harnack
inequality.$\medskip$

In \textbf{Section \ref{TOOLS}}, we give our main tools for obtaining a priori
estimates. First we show that the technique of integral estimates if
fundamental, and can be used also for systems $(A)$ and $(M).$ In Proposition
\ref{Y} we consider both equations with absorption or source terms
\begin{equation}
-\mathcal{A}_{p}u+f=0,\quad\text{or\quad\ }-\mathcal{A}_{p}u=f, \label{subsup}%
\end{equation}
in a domain $\Omega,$ where $f\in L_{loc}^{1}(\Omega),$ $f\geq0$, and obtain
local integral estimates of $f$ with respect to $u$ in a ball $B(x_{0},\rho)$.
When $\mathcal{A}_{p}$ is S-$p$-C, they imply minorizations by the W\"{o}lf
potential of $f$ in the ball
\begin{equation}
W_{1,p}^{f}(B(x_{0},\rho))=\int_{0}^{\rho}\left(  t^{p}\oint_{B(x_{0}%
,t)}f\right)  ^{\frac{1}{p-1}}\frac{dt}{t}, \label{wolf}%
\end{equation}
extending the first results of \cite{KilMa1}, \cite{KilMa2}. The second tool
is the well known weak Harnack inequalities for solutions of (\ref{subsup}) in
case of S-$p$-C operators, and a more general version in case of equation with
absorption, which appears to be very useful. The third one is a boostrap
argument given in \cite{BG} which remains essential.\medskip$\medskip$

In \textbf{Section \ref{trois}} we study both systems $(A)$ and $(M)$. When
$\mathcal{A}_{p}=\Delta_{p}$ and $\mathcal{A}_{q}=\Delta_{q},$ they admit
particular radial solutions
\[
u^{\ast}(x)=A^{\ast}\left\vert x\right\vert ^{-\gamma},v^{\ast}(r)=B^{\ast
}\left\vert x\right\vert ^{-\xi},
\]
where
\begin{equation}
\gamma=\frac{p(q-1)+q\delta}{D},\qquad\xi=\frac{q(p-1)+p\mu}{D}, \label{G}%
\end{equation}
whenever
\begin{align*}
\gamma &  >\frac{N-p}{p-1}\quad\text{and\quad\ }\xi>\frac{N-q}{q-1}%
\qquad\text{for system }(A),\\
\gamma &  >\frac{N-p}{p-1}\quad\text{and\quad}\xi<\frac{N-q}{q-1}%
\qquad\text{for system }(M).
\end{align*}
$\medskip$

Our main result for the system with absorption term $(A)$ extends precisely
the Osserman-Keller estimate of the scalar case (\ref{del}):

\begin{theorem}
\label{absorption} Assume that
\begin{equation}
\mathcal{A}_{p}\text{ is S-}p\text{-C},\quad\mathcal{A}_{q}\text{ is
S-}q\text{-C,} \label{pq}%
\end{equation}
and (\ref{D}) holds. Let $u\in W_{loc}^{1,p}\left(  \Omega\right)  \cap
C\left(  \Omega\right)  ,$ $v\in W_{loc}^{1,q}\left(  \Omega\right)  \cap
C\left(  \Omega\right)  $ be nonnegative solutions of
\[
\left\{
\begin{array}
[c]{c}%
-\mathcal{A}_{p}u+v^{\delta}\leq0,\\
-\mathcal{A}_{q}v+u^{\mu\,}\leq0,
\end{array}
\right.  \qquad\text{in }\Omega.
\]
Then for any $x\in\Omega$
\begin{equation}
u(x)\leq Cd(x,\partial\Omega)^{-\gamma},\qquad v(x)\leq Cd(x,\partial
\Omega)^{-\xi}, \label{maja}%
\end{equation}
with $C=C(N,p,q,\delta,\mu,K_{1,p},K_{2,p},K_{1,q},K_{2,q}).$
\end{theorem}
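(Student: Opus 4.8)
The plan is to exploit the three tools announced in Section~\ref{TOOLS}: the Wolff-potential lower bounds coming from the integral estimates of Proposition~\ref{Y}, the weak Harnack inequality for supersolutions of S-$p$-C operators, and the bootstrap lemma of \cite{BG}. Fix $x_0\in\Omega$ and set $\rho=\tfrac12 d(x_0,\partial\Omega)$; it suffices to bound $u(x_0)$ and $v(x_0)$ by $C\rho^{-\gamma}$ and $C\rho^{-\xi}$ respectively, since the estimate at a general point follows by a standard covering/scaling argument. Because $-\mathcal{A}_p u\ge v^\delta\ge 0$ and $-\mathcal{A}_q v\ge u^\mu\ge 0$, both $u$ and $v$ are nonnegative supersolutions, so the weak Harnack inequality applies to each: for balls $B(x_0,t)\subset\subset\Omega$,
\[
\left(\oint_{B(x_0,t)} u^{s_1}\right)^{1/s_1}\le C\,\inf_{B(x_0,t/2)} u,\qquad
\left(\oint_{B(x_0,t)} v^{s_2}\right)^{1/s_2}\le C\,\inf_{B(x_0,t/2)} v,
\]
for suitable small exponents $s_1,s_2>0$. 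Simultaneously, Proposition~\ref{Y} applied to $-\mathcal{A}_p u + v^\delta\le 0$ gives a lower bound of $u$ at $x_0$ by the Wolff potential $W_{1,p}^{v^\delta}(B(x_0,\rho))$, and likewise $u(x_0)$, or rather $\inf_{B(x_0,\rho)}u$, controls $W^{v^\delta}$ from above in the sense that
\[
\inf_{B(x_0,\rho/2)} u \;\ge\; c\int_0^{\rho}\Bigl(t^{p}\oint_{B(x_0,t)} v^{\delta}\Bigr)^{\frac1{p-1}}\frac{dt}{t},
\]
and symmetrically for $v$ in terms of $W_{1,q}^{u^\mu}$.

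The heart of the argument is to combine these two chains of inequalities into a self-improving pair. Using the weak Harnack inequality to replace the averages $\oint_{B(x_0,t)}v^\delta$ by (a power of) $\inf_{B(x_0,t/2)}v$ — this is where one needs $\delta$ compared against the Harnack exponent, and where the technical care about exponents enters — we get, writing $m_u(t)=\inf_{B(x_0,t)}u$ and $m_v(t)=\inf_{B(x_0,t)}v$,
\[
m_u(\rho)\;\gtrsim\;\int_0^{\rho}\Bigl(t^{p}\,m_v(t/2)^{\delta}\Bigr)^{\frac1{p-1}}\frac{dt}{t},
\qquad
m_v(\rho)\;\gtrsim\;\int_0^{\rho}\Bigl(t^{q}\,m_u(t/2)^{\mu}\Bigr)^{\frac1{q-1}}\frac{dt}{t}.
\]
Now one feeds each estimate into the other. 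Inserting the lower bound for $m_v$ into the integrand of the first inequality, and dropping all but a dyadic piece of the inner integral (or using that $m_u,m_v$ are nonincreasing in the radius), produces an inequality of the schematic form $m_u(\rho)\gtrsim \rho^{\alpha_0} m_u(c\rho)^{\beta_0}$ with $\beta_0=\delta\mu/((p-1)(q-1))>1$ by hypothesis~(\ref{D}); this is precisely the situation handled by the bootstrap lemma of \cite{BG}, which then yields $m_u(\rho)\le C\rho^{-\gamma}$ — the exponent $\gamma=(p(q-1)+q\delta)/D$ being exactly what one obtains by solving the two-dimensional linear system for the scaling exponents, i.e. $\gamma$ and $\xi$ satisfy $\gamma(p-1)=p+\delta\xi$ and $\xi(q-1)=q+\mu\gamma$. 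Finally, $m_u(\rho)\le C\rho^{-\gamma}$ combined once more with the weak Harnack inequality upgrades the bound on the infimum to the pointwise bound $u(x_0)\le C\rho^{-\gamma}$, and symmetrically $v(x_0)\le C\rho^{-\xi}$.

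The main obstacle is the bookkeeping of exponents when passing from $\oint v^\delta$ to a power of $m_v$ via weak Harnack: the weak Harnack inequality is stated with a fixed small exponent $s_2$, so if $\delta\le s_2$ one is fine, but if $\delta>s_2$ one must instead iterate or use that once a preliminary (possibly non-sharp) decay estimate on $v$ is known, the average $\oint v^\delta$ is controlled directly. I would therefore run the scheme in two passes: a first pass producing rough polynomial bounds $m_u(t)\le C t^{-\gamma'}$, $m_v(t)\le C t^{-\xi'}$ with non-optimal exponents (enough to make all integrals converge and all applications of weak Harnack legitimate), and a second pass feeding these back in to sharpen the exponents to the stated $\gamma,\xi$ via the $D>0$ bootstrap. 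One must also check the convergence of the Wolff-potential integrals near $t=0$, which is automatic once the rough upper bounds on $m_u,m_v$ are in hand, and verify that all constants depend only on the listed parameters — this follows because every tool invoked (Proposition~\ref{Y}, weak Harnack, the bootstrap lemma) has constants depending only on $N,p,q$ and the coercivity constants.
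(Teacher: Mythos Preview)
Your proposal contains a sign error that undermines the whole strategy. The inequality $-\mathcal{A}_{p}u+v^{\delta}\leq 0$ means $-\mathcal{A}_{p}u\leq -v^{\delta}\leq 0$, so $u$ (and likewise $v$) is a \emph{subsolution}, not a supersolution. Consequently the weak Harnack inequality you invoke, $\bigl(\oint u^{s_{1}}\bigr)^{1/s_{1}}\leq C\inf u$, is not available; what holds is the subsolution estimate $\sup u\leq C\bigl(\oint u^{\ell}\bigr)^{1/\ell}$. For the same reason, Corollary~\ref{wf} applied to $-\mathcal{A}_{p}u+v^{\delta}\leq 0$ yields (\ref{fri}), namely $CW_{1,p}^{v^{\delta}}(B(x_{0},\rho))+u(x_{0})\leq \sup_{B(x_{0},2\rho)}u$, which is an \emph{upper} bound on the Wolff potential in terms of $\sup u$, not the lower bound $\inf u\gtrsim W_{1,p}^{v^{\delta}}$ that you wrote. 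Your chain of inequalities $m_{u}(\rho)\gtrsim \int_{0}^{\rho}(t^{p}m_{v}(t/2)^{\delta})^{1/(p-1)}\,dt/t$ therefore has no basis here, and the resulting self-improving relation with exponent $\beta_{0}>1$ is in any case not the setting of Lemma~\ref{boot}, which requires $d\in(0,1)$.

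The paper's proof runs in the opposite direction. Proposition~\ref{Y} with $f=v^{\delta}$ gives the \emph{upper} bound $\oint_{\varphi}v^{\delta}\leq C\rho^{-p}\bigl(\oint_{\varphi}u^{\ell}\bigr)^{(p-1)/\ell}$ and symmetrically $\oint_{\varphi}u^{\mu}\leq C\rho^{-q}\bigl(\oint_{\varphi}v^{k}\bigr)^{(q-1)/k}$. When $\mu>p-1$ and $\delta>q-1$ one takes $\ell=\mu$, $k=\delta$ and substitutes one into the other; since $(p-1)(q-1)/\delta\mu<1$ this closes immediately and gives $\bigl(\oint u^{\mu}\bigr)^{1/\mu}\leq C\rho^{-\gamma}$, after which the subsolution weak Harnack inequality (\ref{uh}) yields the pointwise bound. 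When (say) $\delta\leq q-1$, one cannot take $k=\delta$ in Proposition~\ref{Y}; the paper then uses the refined subsolution estimate of Lemma~\ref{Har} with the small $\varepsilon$ to control $\bigl(\oint v^{k}\bigr)^{1/k}$ by $\bigl(\oint v^{\delta}\bigr)^{1/\delta}$ on a slightly larger ball, and it is this $\varepsilon$-loss that is absorbed by the bootstrap Lemma~\ref{boot} (with $d=(p-1)(q-1)/\delta\mu<1$).
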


Our second result shows that the mixed system $(M)$ also satisfies the
Osserman-Keller estimate, \textit{without any restriction on }$\delta$\textit{
and }$\mu,$ and moreover the second function $v$ \textit{always satisfies
Harnack inequality:}

\begin{theorem}
\label{mixed}Assume (\ref{D}),(\ref{pq}). Let $u\in W_{loc}^{1,p}\left(
\Omega\right)  \cap C\left(  \Omega\right)  ,$ $v\in W_{loc}^{1,q}\left(
\Omega\right)  \cap C\left(  \Omega\right)  $ be nonnegative solutions of
\[
\left\{
\begin{array}
[c]{c}%
-\mathcal{A}_{p}u+v^{\delta}\leq0,\\
-\mathcal{A}_{q}v\geqq u^{\mu\,},
\end{array}
\right.  \qquad\text{in }\Omega.
\]
Then (\ref{maja}) still holds for any $x\in\Omega.\medskip$

\noindent Moreover, if $u,v$ are any nonnegative solution of system $(M),$
then $v$ satisfies Harnack inequality in $\Omega,$ and there exists another
$C>0$ as above, such that the punctual inequality holds
\begin{equation}
u^{\mu\,}(x)\leq Cv^{q-1}(x)d(x,\partial\Omega)^{-q}. \label{punc}%
\end{equation}

\end{theorem}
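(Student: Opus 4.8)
The plan is to exploit the structure of the mixed system $(M)$: the second equation $-\mathcal{A}_q v \geq u^\mu \geq 0$ says that $v$ is a nonnegative supersolution of an S-$q$-C operator, so the weak Harnack inequality (the second tool of Section~\ref{TOOLS}) applies directly to $v$, giving that $v$ satisfies Harnack in $\Omega$. For the Keller--Osserman estimate \eqref{maja}, I would first observe that the hypotheses of the first part are exactly those needed so that both $u$ and $v$ are supersolutions of the relevant operators with the right-hand sides controlled: $-\mathcal{A}_p u \geq v^\delta$ and $-\mathcal{A}_q v \geq u^\mu$ pointwise (the sign condition $-\mathcal{A}_q v \geq u^\mu$ being now an inequality in the ``source-like'' direction that still lets us treat $v$ as a supersolution). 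Then I would apply Proposition~\ref{Y} in each ball $B(x_0,\rho)\subset\Omega$ to get lower bounds for $u$ and $v$ in terms of the Wolff potentials $W_{1,p}^{v^\delta}$ and $W_{1,q}^{u^\mu}$ respectively, together with the weak Harnack upper bounds.

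The core is the bootstrap argument of \cite{BG}. Fix $x_0$ and write $\rho = d(x_0,\partial\Omega)$. Using the weak Harnack inequality and the Wolff-potential minorization, one derives a pair of coupled inequalities relating $\inf_{B(x_0,\theta\rho)} u$, $\inf_{B(x_0,\theta\rho)} v$ (for suitable $\theta\in(0,1)$) to integral averages of $v^\delta$ and $u^\mu$ over slightly larger balls, which in turn are bounded below by powers of the infima over those balls. Substituting one estimate into the other and iterating over a sequence of shrinking radii $\rho_k$, the exponents combine through $D=\delta\mu-(p-1)(q-1)$; condition \eqref{D} guarantees the resulting recursion for $\log u$, $\log v$ is contractive, so the iteration converges and yields $u(x_0)\le C\rho^{-\gamma}$, $v(x_0)\le C\rho^{-\xi}$ with $\gamma=\tfrac{p(q-1)+q\delta}{D}$, $\xi=\tfrac{q(p-1)+p\mu}{D}$. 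The constant depends only on the structural data because all inequalities used (weak Harnack, Wolff potential bounds) have structural constants.

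For the pointwise inequality \eqref{punc}: from the second equation $-\mathcal{A}_q v \geq u^\mu$ in $B(x_0,\rho)$, I would apply the Wolff-potential lower bound for the S-$q$-C supersolution $v$ to get, for any fixed fraction of $\rho$,
\[
v(x_0) \ \geq \ c\, W_{1,q}^{u^\mu}\bigl(B(x_0,\tfrac{\rho}{2})\bigr) \ \geq \ c\Bigl( \rho^{q}\,\oint_{B(x_0,\rho/2)} u^\mu \Bigr)^{\frac{1}{q-1}}.
\]
Combining this with the Harnack inequality for $v$ (so that $v$ on $B(x_0,\rho/2)$ is comparable to $v(x_0)$) and the already-established local lower bound for $u$ near $x_0$ coming from the first equation, one obtains $u^\mu(x_0)\le C v^{q-1}(x_0)\rho^{-q}$.

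The main obstacle is the bootstrap step: because $(M)$ admits no comparison principle, one cannot compare $(u,v)$ with the explicit radial solutions $u^\ast,v^\ast$; instead one must close the coupled system of integral/Harnack inequalities purely by iteration, and the delicate point is verifying that the linear recursion in the exponents has spectral radius $<1$ precisely under $D>0$ and that no restriction on the size of $\delta,\mu$ is needed here (unlike the two-source case of \cite{BP}), which works because the ``absorption/supersolution'' sign in the first equation, together with $-\mathcal{A}_q v\ge u^\mu$, makes both one-sided estimates point in the favorable direction.
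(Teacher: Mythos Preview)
There are two genuine gaps in your proposal.

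\medskip
\textbf{1. Weak Harnack does not give full Harnack for $v$.} You write that since $-\mathcal{A}_q v \geq u^\mu \geq 0$, ``the weak Harnack inequality applies directly to $v$, giving that $v$ satisfies Harnack in $\Omega$.'' This is false: the weak Harnack inequality \eqref{wh} for supersolutions gives only
\[
\Bigl(\oint_{B(x_0,2\rho)} v^k\Bigr)^{1/k} \le C\inf_{B(x_0,\rho)} v,
\]
and says nothing about $\sup v$. A supersolution of $-\mathcal{A}_q v\ge 0$ can vanish at a point and be positive nearby. The paper's route is the reverse of yours: it first proves the punctual inequality \eqref{punc}. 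Combining Proposition~\ref{Y} for the second inequality (with $k<\frac{N(q-1)}{N-q}$), Lemma~\ref{Har} for the subsolution $u$, and the weak Harnack inequality \eqref{wh} for $v$, one gets
\[
\sup_{B(x_0,\rho/2)} u^\mu \le C\oint_\varphi u^\mu \le C\rho^{-q}\Bigl(\oint_\varphi v^k\Bigr)^{(q-1)/k}\le C\rho^{-q}\inf_{B(x_0,\rho)} v^{q-1}.
\]
Only then, for the full system $(M)$, one writes $-\mathcal{A}_q v = d\,v^{q-1}$ with $d(x)=u^\mu/v^{q-1}\le C\rho^{-q}$; the boundedness of the coefficient $d$ is precisely what yields the \emph{full} Harnack inequality for $v$ via Serrin--Trudinger. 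So \eqref{punc} is the cause, not the consequence, of Harnack for $v$.

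\medskip
\textbf{2. Sign of the first inequality.} You write ``$-\mathcal{A}_p u \ge v^\delta$'', but the hypothesis is $-\mathcal{A}_p u + v^\delta \le 0$, i.e.\ $\mathcal{A}_p u \ge v^\delta$; $u$ is a \emph{sub}solution, not a supersolution. Consequently the Wolff-potential lower bound \eqref{fra} is unavailable for $u$; what is available is the sup-estimate \eqref{uh} (or its refinement in Lemma~\ref{Har}), which the paper uses. Your talk of ``lower bounds for $u$ and $v$ in terms of the Wolff potentials'' and of a contractive recursion in $\log u,\log v$ does not match the actual structure of the inequalities.

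\medskip
In short, the paper does not run a bootstrap of the Lemma~\ref{boot} type here at all. After \eqref{punc} and the resulting Harnack for $v$, the Keller--Osserman estimate closes in one step:
\[
v^\delta(x_0)\le C\inf v^\delta \le C\oint_\varphi v^\delta \le C\rho^{-p}\sup u^{p-1}\le C\rho^{-p-q(p-1)/\mu} v^{(p-1)(q-1)/\mu}(x_0),
\]
using \eqref{punc} for the last inequality; this yields $v(x_0)\le C\rho^{-\xi}$ directly since $D>0$, and then $u(x_0)\le C\rho^{-\gamma}$ from \eqref{punc}.
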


Notice that the results are new even for $p=q=2.$ As a consequence we deduce
Liouville properties:

\begin{corollary}
\label{Liou}Assume (\ref{D}),(\ref{pq}). Then there exist no entire
nonnegative solutions of systems $(A)$ or $(M).\medskip$
\end{corollary}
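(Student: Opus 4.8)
The plan is to derive Corollary \ref{Liou} directly from the Keller--Osserman estimates of Theorems \ref{absorption} and \ref{mixed}. Suppose $(u,v)$ were an entire nonnegative solution of $(A)$ (respectively $(M)$) on $\Omega=\mathbb{R}^{N}$. In particular $(u,v)$ satisfies the corresponding system of differential inequalities in the hypotheses of Theorem \ref{absorption} (resp.\ Theorem \ref{mixed}) on every bounded subdomain, so the estimates \eqref{maja} apply on $\mathbb{R}^{N}$. For a fixed point $x\in\mathbb{R}^{N}$ and any $R>0$, one has $d(x,\partial B(x_{0},R))\to\infty$ as the ball is taken larger; more concretely, working on balls $B(0,R)$ and evaluating at a fixed $x$, the distance $d(x,\partial B(0,R))\ge R-|x|\to\infty$ as $R\to\infty$.

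First I would fix $x$ and apply \eqref{maja} with $\Omega=B(0,R)$ for $R>|x|$, obtaining
\[
u(x)\le C\,(R-|x|)^{-\gamma},\qquad v(x)\le C\,(R-|x|)^{-\xi},
\]
where, crucially, the constant $C=C(N,p,q,\delta,\mu,K_{1,p},K_{2,p},K_{1,q},K_{2,q})$ does not depend on $R$ or on $x$. Here \eqref{D} guarantees $D>0$, hence $\gamma=\frac{p(q-1)+q\delta}{D}>0$ and $\xi=\frac{q(p-1)+p\mu}{D}>0$, so both right-hand sides tend to $0$ as $R\to\infty$. Letting $R\to\infty$ gives $u(x)=v(x)=0$. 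Since $x$ was arbitrary, $u\equiv v\equiv0$ on $\mathbb{R}^{N}$.

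It remains to observe that $(0,0)$ is not admissible as a genuine solution, or alternatively to note that the statement asserts nonexistence of (nontrivial) entire solutions; in the conventions of the paper a "solution" of $(A)$ or $(M)$ is a pair satisfying the equations with the nonlinear right-hand sides, and the trivial pair does satisfy them, so the correct reading is that no \emph{positive} (or nonzero) entire solution exists, which is exactly what the argument above yields. The same reasoning applies verbatim to $(M)$, using the version of \eqref{maja} proved in Theorem \ref{mixed}, since a solution of $(M)$ satisfies the differential inequalities $-\mathcal{A}_{p}u+v^{\delta}\le0$ and $-\mathcal{A}_{q}v\ge u^{\mu}$ (indeed with equality).

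I do not expect any real obstacle here: the entire content of the corollary is packaged in the two theorems, and the only point requiring a line of care is the uniformity of $C$ in the domain, which is explicitly part of the statements of Theorems \ref{absorption} and \ref{mixed} (the constant depends only on $N,p,q,\delta,\mu$ and the coercivity constants). If one wanted to be slightly more careful about the regularity hypotheses $u\in W^{1,p}_{loc}\cap C$, $v\in W^{1,q}_{loc}\cap C$, one simply notes that an entire solution has these properties on every ball by assumption, so the theorems apply on each $B(0,R)$.
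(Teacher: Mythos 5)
Your argument is essentially identical to the paper's: apply the Keller--Osserman estimates \eqref{maja} on balls of radius $R$, use the $R$-independence of the constant $C$, and let $R\to\infty$ to conclude $u\equiv v\equiv 0$. The only cosmetic difference is that the paper centers the ball at the point $x$ (so $d(x,\partial B(x,R))=R$) rather than at the origin, which avoids the extra $R-|x|$ bookkeeping.
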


\textbf{Section \ref{quat}} concerns the behaviour near 0 of systems with
possible weights $(A_{w})$ and $(M_{w})$, where $\gamma,\xi$ are replaced by
\begin{equation}
\gamma_{a,b}=\frac{(p+a)(q-1)+(q+b)\delta}{D},\qquad\xi_{a,b}=\frac
{(q+b)(p-1)+(p+a)\mu}{D}, \label{Gab}%
\end{equation}
in other terms $\delta\xi_{a,b}=(p-1)\gamma_{a,b}+p+a,$ $\mu\gamma
_{a,b}=(q-1)\xi_{a,b}+q+b.$ We set $B_{r}=B(0,r)$ and $B_{r}^{\prime}%
=B_{r}\backslash\left\{  0\right\}  $ for any $r>0.$ Our results extend and
simplify the results of \cite{BG}, \cite{BG2} in a significant way:

\begin{theorem}
\label{weightabs}Assume (\ref{D}),(\ref{pq}). Let $u\in W_{loc}^{1,p}\left(
B_{1}^{\prime}\right)  \cap C\left(  B_{1}^{\prime}\right)  ,$ $v\in
W_{loc}^{1,q}\left(  B_{1}^{\prime}\right)  \cap C\left(  B_{1}^{\prime
}\right)  $ be nonnegative solutions of
\begin{equation}
\left\{
\begin{array}
[c]{c}%
-\mathcal{A}_{p}u+|x|^{a}\,v^{\delta}\leq0,\\
-\mathcal{A}_{q}v+|x|^{b}u^{\mu\,}\leq0,
\end{array}
\right.  \qquad\text{in }B_{1}^{\prime}. \label{dpo}%
\end{equation}
Then there exists $C=C(N,p,q,a,b,\delta,\mu,K_{1,p},K_{2,p},K_{1,q}%
,K_{2,q})>0$ such that
\begin{equation}
u(x)\leq C\left\vert x\right\vert ^{-\gamma_{a,b}},\qquad v(x)\leq C\left\vert
x\right\vert ^{-\xi_{a,b}}\qquad\text{in }B_{\frac{1}{2}}^{\prime}.
\label{ast}%
\end{equation}

\end{theorem}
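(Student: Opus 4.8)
The plan is to reduce the weighted problem near the origin to the boundary estimate of Theorem~\ref{absorption} by a scaling argument, exactly as one does in the scalar Keller--Osserman theory. Fix a point $x_{0}\in B_{1/2}^{\prime}$ and set $\rho=|x_{0}|/2$, so that the ball $B(x_{0},\rho)$ stays in the punctured ball $B_{1}^{\prime}$ and on it the weights are comparable to a constant: $|x|^{a}\asymp\rho^{a}$ and $|x|^{b}\asymp\rho^{b}$, with constants depending only on $a,b$. The idea is then to rescale $(u,v)$ on $B(x_{0},\rho)$ so as to absorb the weight factors $\rho^{a},\rho^{b}$ and the radius $\rho$ into new unknowns solving an \emph{unweighted} system of the same type on the unit ball, to which Theorem~\ref{absorption} (or rather its proof, which only uses the differential inequalities on a ball) applies.

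Concretely, I would look for exponents $\alpha,\beta$ and try the substitution
\begin{equation}
\tilde u(y)=\rho^{\alpha}\,u(x_{0}+\rho y),\qquad \tilde v(y)=\rho^{\beta}\,v(x_{0}+\rho y),\qquad y\in B_{1}. \label{scaling}
\end{equation}
Because $\mathcal{A}_{p}$ is $S$-$p$-$C$, the rescaled operator $\tilde{\mathcal{A}}_{p}\tilde u(y):=\rho^{p}\,(\mathcal{A}_{p}u)(x_{0}+\rho y)$ is again $S$-$p$-$C$ with the \emph{same} constants $K_{1,p},K_{2,p}$ (coercivity is scale invariant), and similarly for $q$. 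Plugging \eqref{scaling} into \eqref{dpo} and using $|x|\asymp\rho$ on the ball, the first inequality becomes, up to harmless constants, $-\tilde{\mathcal{A}}_{p}\tilde u+\rho^{\,p+a-\alpha+\beta\delta}\,\tilde v^{\delta}\leq0$ and the second $-\tilde{\mathcal{A}}_{q}\tilde v+\rho^{\,q+b-\beta+\alpha\mu}\,\tilde u^{\mu}\leq0$. Choosing $\alpha=\gamma_{a,b}$ and $\beta=\xi_{a,b}$ makes both exponents of $\rho$ vanish, precisely by the identities $\delta\xi_{a,b}=(p-1)\gamma_{a,b}+p+a$ and $\mu\gamma_{a,b}=(q-1)\xi_{a,b}+q+b$ recorded just before the theorem (one rearranges $p+a-\alpha+\beta\delta = \delta\xi_{a,b}-(p-1)\gamma_{a,b}+p+a-\gamma_{a,b}\cdot\text{something}$ — the bookkeeping is routine). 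The constants that appear from $|x|^{a}\asymp\rho^{a}$ can be pushed onto the coercivity constants $K_{i,p},K_{i,q}$, or absorbed by a further trivial rescaling of $\tilde u,\tilde v$, so we land on a system of exactly the form treated in Theorem~\ref{absorption} on $B_{1}$, with structural constants depending only on $N,p,q,a,b,\delta,\mu$ and the original $K$'s.

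Theorem~\ref{absorption} then gives $\tilde u(0)\leq C\,d(0,\partial B_{1})^{-\gamma}=C$ and $\tilde v(0)\leq C$ with $C$ of the claimed form; undoing \eqref{scaling} at $y=0$ yields $u(x_{0})\leq C\rho^{-\gamma_{a,b}}=C'\,|x_{0}|^{-\gamma_{a,b}}$ and $v(x_{0})\leq C'\,|x_{0}|^{-\xi_{a,b}}$, which is \eqref{ast}. Since $x_{0}\in B_{1/2}^{\prime}$ was arbitrary, we are done. The point I expect to require the most care is verifying that the exponents in the rescaled system genuinely cancel \emph{simultaneously} for a single pair $(\alpha,\beta)$ — this is where the superlinearity hypothesis $D>0$ is implicitly used, since $D>0$ is exactly the condition under which the linear system defining $\gamma_{a,b},\xi_{a,b}$ is uniquely solvable with the right signs — and that the weight comparison $|x|^{a}\asymp\rho^{a}$ on $B(x_{0},\rho)$, which introduces multiplicative constants on the \emph{source-like} terms $v^{\delta}$ and $u^{\mu}$ rather than on the operators, can be legitimately absorbed without disturbing the $S$-$p$-$C$ structure; a clean way is to note that if $(u,v)$ solves \eqref{dpo} then $(\lambda u,\sigma v)$ solves a system with the same operators up to modifying $K_{1,p},K_{2,p}$ by fixed powers of $\lambda$, so one has enough freedom to kill the leftover constants. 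A secondary point is that Theorem~\ref{absorption} is stated for solutions on a general domain $\Omega$; here we only invoke it on $\Omega=B_{1}$ with the distance to the boundary evaluated at the center, so no new input is needed.
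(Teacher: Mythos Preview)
Your scaling strategy is valid and gives a genuine alternative to the paper's argument. The paper does \emph{not} invoke Theorem~\ref{absorption} as a black box; instead it reruns its proof directly in the ball $B(x_{0},\rho)$ with $\rho_{0}=|x_{0}|/4$, using that $|x|\asymp\rho_{0}$ there to replace the weighted estimates (\ref{m1}),(\ref{mm}) by their analogues (\ref{bbla}) with $p,q$ traded for $p+a,q+b$; the bootstrap Lemma~\ref{boot} then applies exactly as before. Your route---rescale to kill the weights and apply Theorem~\ref{absorption} on the unit ball---is cleaner conceptually and explains transparently why $\gamma_{a,b},\xi_{a,b}$ are the right exponents, while the paper's route avoids having to check that the S-$p$-C structure survives rescaling. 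On that point your bookkeeping needs correcting: with $\tilde u(y)=\rho^{\alpha}u(x_{0}+\rho y)$ the right rescaled Carath\'eodory function is $\tilde{\mathrm{A}}_{p}(y,s,\eta)=\rho^{(\alpha+1)(p-1)}\mathrm{A}_{p}(x_{0}+\rho y,\rho^{-\alpha}s,\rho^{-\alpha-1}\eta)$, which is S-$p$-C with the \emph{same} $K_{1,p},K_{2,p}$, and then $\tilde{\mathcal{A}}_{p}\tilde u=\rho^{\alpha(p-1)+p}(\mathcal{A}_{p}u)(x_{0}+\rho y)$, not $\rho^{p}$ times it; consequently the exponent in your first displayed inequality should be $\alpha(p-1)+p+a-\beta\delta$, which indeed vanishes for $(\alpha,\beta)=(\gamma_{a,b},\xi_{a,b})$ by the identities quoted before the theorem. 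The leftover constants $c_{1},c_{2}$ from $|x|^{a}\asymp\rho^{a}$ are most cleanly eliminated by a second rescaling of the same type (solve $\lambda^{p-1}\sigma^{-\delta}c_{1}=1=\sigma^{q-1}\lambda^{-\mu}c_{2}$, possible precisely because $D>0$), after which Theorem~\ref{absorption} applies verbatim; your remark about ``modifying $K_{1,p},K_{2,p}$ by fixed powers of $\lambda$'' is not quite the mechanism, since the S-$p$-C constants are in fact preserved.
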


\begin{theorem}
\label{weightmix}Assume (\ref{D}),(\ref{pq}). Let $u\in W_{loc}^{1,p}\left(
B_{1}^{\prime}\right)  \cap C\left(  B_{1}^{\prime}\right)  ,$ $v\in
W_{loc}^{1,q}\left(  B_{1}^{\prime}\right)  \cap C\left(  B_{1}^{\prime
}\right)  $ be nonnegative solutions of
\begin{equation}
\left\{
\begin{array}
[c]{c}%
-\mathcal{A}_{p}u+|x|^{a}\,v^{\delta}\leq0,\\
-\mathcal{A}_{q}v\geq|x|^{b}u^{\mu\,},
\end{array}
\right.  \qquad\text{in }B_{1}^{\prime}. \label{dpu}%
\end{equation}
in $B_{1}^{\prime}.$ Then there exists $C>0$ as in theorem \ref{weightabs}
such that
\begin{equation}
u(x)\leq C\left\vert x\right\vert ^{-\gamma_{a,b}},\qquad v(x)\leq
C\min(\left\vert x\right\vert ^{-\xi_{a,b}},\left\vert x\right\vert
^{-\frac{N-q}{q-1}}),\qquad\text{in }B_{\frac{1}{2}}^{\prime}. \label{oth}%
\end{equation}
Moreover if $(u,v)$ is any nonnegative solution of $(M_{w}),$ then $v$
satisfies Harnack inequality in $B_{\frac{1}{2}}^{\prime},$ and there exist
another $C>0$ as above, such that
\begin{equation}
|x|^{b+q}u^{\mu\,}(x)\leq Cv^{q-1}(x),\qquad\text{in }B_{\frac{1}{2}}^{\prime
}. \label{ath}%
\end{equation}

\end{theorem}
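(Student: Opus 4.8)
The plan is to combine the techniques already developed in Section~\ref{TOOLS} and Section~\ref{trois} with a careful scaling to the punctured ball, treating the weights $|x|^a$ and $|x|^b$ as essentially constant on dyadic annuli. The three main inputs are: the local integral estimates of Proposition~\ref{Y} and the resulting lower bounds by Wolff potentials; the weak Harnack inequality (in its version adapted to equations with absorption); and the bootstrap argument of \cite{BG}. The estimate \eqref{oth} for $u$ and the first bound $v(x)\le C|x|^{-\xi_{a,b}}$ are obtained exactly as in Theorem~\ref{mixed}, but with the exponents $\gamma,\xi$ replaced by $\gamma_{a,b},\xi_{a,b}$: on a ball $B(x_0,\rho)$ with $\rho\simeq|x_0|/2$ and $x_0$ near $0$, the weights satisfy $|x|^a\simeq|x_0|^a$ and $|x|^b\simeq|x_0|^b$, so the system \eqref{dpu} behaves like the unweighted mixed system with the coupling constants rescaled; the identities $\delta\xi_{a,b}=(p-1)\gamma_{a,b}+p+a$ and $\mu\gamma_{a,b}=(q-1)\xi_{a,b}+q+b$ are precisely what makes the bootstrap close with the new exponents.

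Next I would establish the Harnack inequality for $v$ when $(u,v)$ solves the full system $(M_w)$. Since $-\mathcal{A}_q v = |x|^b u^{\mu}\ge 0$ in $B_1'$, $v$ is a nonnegative supersolution of an S-$q$-C operator, so the weak Harnack inequality gives a lower bound on $v$ in terms of an average; combined with the upper bound from \eqref{oth} and the Wolff potential minorization of $W_{1,q}^{|x|^b u^{\mu}}$ coming from Proposition~\ref{Y}, one upgrades this to the full Harnack inequality on $B_{1/2}'$, just as in the proof of Theorem~\ref{mixed}. Note the extra bound $v(x)\le C|x|^{-(N-q)/(q-1)}$ in \eqref{oth}: this comes from the fact that $v$ is $\mathcal{A}_q$-superharmonic and nonnegative near the isolated singularity $0$, hence dominated by the fundamental-solution rate $|x|^{-(N-q)/(q-1)}$ of the $q$-Laplacian-type operator; this is a standard removable-singularity-type bound for supersolutions and does not even use the first equation.

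Finally, for the punctual inequality \eqref{ath}, I would localize: fix $x_0$ near $0$, set $\rho=|x_0|/4$, and apply the weak Harnack inequality to the supersolution $v$ on $B(x_0,2\rho)$ to get $\inf_{B(x_0,\rho)} v \ge c\,\bigl(\rho^{q}\oint_{B(x_0,\rho)}|x|^b u^{\mu}\bigr)^{1/(q-1)}$ (this is exactly the kind of estimate Proposition~\ref{Y} produces for S-$q$-C operators). Since $|x|^b\simeq|x_0|^b$ and $u$ is, by the already-proven upper bound, comparable to its sup on $B(x_0,\rho)$ up to the harmonic-type oscillation, one bounds $\oint_{B(x_0,\rho)} u^{\mu}$ from below by $c\,u^{\mu}(x_0)$ — here one uses the weak Harnack inequality \emph{for $u$}, which holds because the first equation says $-\mathcal{A}_p u = |x|^a v^{\delta}-(\text{something}\ge 0)$, i.e.\ $u$ is a subsolution of $-\mathcal{A}_p u \le |x|^a v^{\delta}$ and the absorption version of the weak Harnack inequality applies. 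Putting these together yields $v^{q-1}(x_0)\ge c\,\rho^{q}|x_0|^{b}u^{\mu}(x_0)\simeq c\,|x_0|^{b+q}u^{\mu}(x_0)$, which is \eqref{ath}.

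The main obstacle I expect is the last step: getting a \emph{two-sided} control of $u$ on the small ball $B(x_0,\rho)$ so that the average of $u^\mu$ is comparable to $u^\mu(x_0)$. Unlike $v$, which is a genuine supersolution and enjoys Harnack, $u$ only satisfies a differential \emph{inequality} with an absorption term and an a priori unknown right-hand side $|x|^a v^\delta$; one must invoke the stronger ``weak Harnack inequality for equations with absorption'' advertised in Section~\ref{TOOLS} (the one the authors call ``very useful''), and check that the absorption term $|x|^a v^\delta$, controlled via the already-established bound $v\le C|x|^{-\xi_{a,b}}$, does not destroy the scaling. Once that oscillation control on $u$ is in hand, the rest is bookkeeping with the dyadic scaling and the algebraic identities defining $\gamma_{a,b},\xi_{a,b}$.
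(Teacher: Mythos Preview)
Your strategy is essentially the paper's, but two points in your account need correction.

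First, the order for the bound $v(x)\le C|x|^{-(N-q)/(q-1)}$ is inverted. This is \emph{not} a pointwise bound available for generic $\mathcal{A}_q$-supersolutions near an isolated singularity: a nonnegative supersolution can be arbitrarily large pointwise. What one gets for free (from \cite{B02}, \cite[Propositions~2.2, 2.3]{BP}) is $|x|^b u^\mu\in L^1(B_{1/2})$ and the \emph{average} bound
\[
\Bigl(\oint_{B(0,\rho)}v^k\Bigr)^{1/k}\le C\rho^{-\frac{N-q}{q-1}},\qquad 0<k<\tfrac{N(q-1)}{N-q}.
\]
Passing to a pointwise bound requires Harnack for $v$, and Harnack is deduced \emph{from} the punctual inequality: once $u^\mu(x_0)\le C|x_0|^{-(q+b)}v^{q-1}(x_0)$ is known, one writes $-\mathcal{A}_q v=d\,v^{q-1}$ with $d(x)=|x|^b u^\mu/v^{q-1}\le C|x|^{-q}$ and invokes the standard Harnack for equations with bounded (after scaling) coefficient. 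So the correct order is: punctual inequality $\Rightarrow$ Harnack $\Rightarrow$ pointwise $|x|^{-(N-q)/(q-1)}$ bound.

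Second, your ``main obstacle'' is not an obstacle. You do not need two-sided control of $u$, nor any refined Harnack with absorption. Since $|x|^a v^\delta\ge 0$, the first inequality gives $-\mathcal{A}_p u\le 0$ outright, so $u$ is a plain $\mathcal{A}_p$-subsolution and Lemma~\ref{Har} (valid for every $s>0$, in particular $s=\mu$) yields immediately
\[
u^\mu(x_0)\le\sup_{B(x_0,\rho/2)}u^\mu\le C\oint_{B(x_0,\rho)}u^\mu.
\]
The size of $|x|^a v^\delta$ is irrelevant here. Chaining this with Proposition~\ref{Y} applied to $v$ (with $f=|x|^b u^\mu$ and some $k\in(q-1,\tfrac{N(q-1)}{N-q})$) and the weak Harnack inequality \eqref{wh} for the supersolution $v$ gives, with $\rho=|x_0|/4$,
\[
u^\mu(x_0)\le C\oint_{\varphi}u^\mu\le C\rho^{-q}|x_0|^{-b}\Bigl(\oint_{B(x_0,2\rho)}v^k\Bigr)^{(q-1)/k}\le C|x_0|^{-(q+b)}\inf_{B(x_0,\rho)}v^{q-1},
\]
which is exactly \eqref{ath}; this is the paper's derivation of \eqref{punc} transported to the weighted setting.
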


Moreover we give removability results for the two systems $(A_{w})$ and
$(M_{w}),$ see Theorems \ref{remabs}, \ref{remmix}, whenever $\mathcal{A}_{p}$
and $\mathcal{A}_{q}$ satisfy monotonicity and homogeneity properties,
extending to the quasilinear case \cite[Corollary 1.2]{BG} and \cite[Theorem
1.1]{BG2}.\medskip

In \textbf{Section \ref{cinq}} we show that our results on Harnack inequality
are optimal, even in the radial case. And we prove the sharpness of the
removability conditions.

\section{Main tools \label{TOOLS}}

For any $x\in\mathbb{R}^{N}$ and $r>0$, we set $B(x,r)=\left\{  y\in
\mathbb{R}^{N}/\ |y-x|<r\right\}  $ and $B_{r}=B(0,r).$

\noindent For any function $w\in L^{1}(\Omega),$ and for any weight function
$\varphi\in L^{\infty}(\Omega)$ such that $\varphi\geq0,$ $\varphi\neq0,$ we
denote by
\[
\oint_{\varphi}w=\frac{1}{\int_{\Omega}\varphi}\int_{\Omega}w\varphi
\]
the mean value of $w$ with respect to $\varphi$ and by
\[
\oint_{\Omega}w=\frac{1}{|\Omega|}\int_{\Omega}w=\oint_{1}w.
\]

\noindent For any function $g\in L_{loc}^{1}(\Omega),$ we say that a function
$u\in W_{loc}^{1,p}(\Omega)$ satisfies
\[
-\mathcal{A}_{p}u\geqq g\qquad\text{in }\Omega,\qquad(\text{resp. }%
\leqq,\text{ resp.}=)
\]
if $\mathrm{A}_{p}(x,u,\nabla u)\in L_{loc}^{p^{\prime}}(\Omega)$ and
\begin{equation}
-\int_{\Omega}\mathrm{A}_{p}(x,u,\nabla u).\nabla\phi\geqq\int_{\Omega}%
g\phi,\qquad(\text{resp. }\leqq,\text{ resp.}=) \label{ws}%
\end{equation}
for any nonnegative $\phi\in W^{1,\infty}(\Omega)$ with compact support in
$\Omega.\medskip$

\subsection{Integral estimates under weak conditions}

Next we prove integral inequalities on the second member $f$ of equations
(\ref{subsup}) in terms of the function $u,$ for either with source or with
absorption terms, obtained by multiplication by $u^{\alpha}$ with $\alpha<0$
for the source case, $\alpha>0$ for the absorption case. The method is now
classical, initiated by Serrin \cite{S2} and Trudinger \cite{Tr}, leading to
Harnack inequalities for S-$p$-C operators. These estimates were developped
for the $p$-Laplace operator in \cite{KilMa1}. Under weak conditions on the
operator, this technique of multiplication by $u^{\alpha}$ was used with
specific $f$ for obtaining Liouville results in \cite{MiPo}. It was developped
for general $f$ in \cite[Proposition 2.1]{BP} where the notion of W-$p$-C
operator was introduced. More recent Liouville results were given in
\cite[Theorem 2.1]{DAMi}, and in \cite{FaS} for the case of absorption terms.

\begin{proposition}
\label{Y}Let $\mathcal{A}_{p}$ be W-$p$-C. Let $f\in L_{loc}^{1}(\Omega),$
$f\geq0$ and let $u\in W_{loc}^{1,p}(\Omega)$ be any nonnegative solution of
inequality
\begin{equation}
-\mathcal{A}_{p}u\geqq f,\qquad\text{in }\Omega, \label{eqs}%
\end{equation}
or of inequality
\begin{equation}
-\mathcal{A}_{p}u+f\leqq0,\qquad\text{in }\Omega. \label{eqa}%
\end{equation}
Let $\xi\in\mathcal{D}(\Omega),$ with values in $\left[  0,1\right]  ,$ and
$\varphi=\xi^{\lambda},$ $\lambda>0,$ and $S_{\xi}=$supp$|\nabla\xi|.\medskip$

Then for any $\ell>p-1,$ there exists $\lambda(p,\ell)$ such that for
$\lambda\geq\lambda(p,\ell),$ there exists $C=C(N,p,K_{p},\ell,\lambda)>0$
such that%
\begin{equation}
\int_{\Omega}f\varphi\leqq C\left\vert S_{\xi}\right\vert \max_{\Omega}%
|\nabla\xi|^{p}\left(  \oint_{S_{\xi}}u^{\ell}\varphi\right)  ^{\frac
{p-1}{\ell}}. \label{inksi}%
\end{equation}

\end{proposition}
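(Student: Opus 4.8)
The plan is to test the weak formulation against $\phi = u^{\alpha}\varphi$ for a suitable exponent $\alpha$, choosing $\alpha > 0$ in the absorption case (\ref{eqa}) and $\alpha \in (-(p-1),0)$... actually $\alpha<0$ in the source case (\ref{eqs}), and then absorb the gradient terms that arise. First I would verify that $\phi$ is an admissible test function: in the absorption case this is immediate once we know $u\in L^\infty_{loc}$ via the standard weak Harnack / local boundedness for supersolutions of S-$p$-C operators (here only W-$p$-C is assumed, so one truncates $u$ by $\min(u,k)$, derives the estimate with a constant independent of $k$, and lets $k\to\infty$ using Fatou and monotone convergence); in the source case one works with $(u+\varepsilon)^{\alpha}$ and lets $\varepsilon\to 0$ at the end. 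Plugging in and using the Leibniz rule, $\nabla\phi = \alpha u^{\alpha-1}\varphi\,\nabla u + u^{\alpha}\nabla\varphi$, so that
\[
-\int_\Omega \mathrm{A}_p(x,u,\nabla u)\cdot\nabla\phi
= -\alpha\int_\Omega u^{\alpha-1}\varphi\,\mathrm{A}_p(x,u,\nabla u)\cdot\nabla u
- \int_\Omega u^{\alpha}\,\mathrm{A}_p(x,u,\nabla u)\cdot\nabla\varphi.
\]

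In the absorption case, with $\alpha>0$ the first term on the right is $\leq 0$ by the sign of $\mathrm{A}_p(x,u,\eta)\cdot\eta\geq 0$, while the left side is $\geq \int_\Omega f u^{\alpha}\varphi$. Thus $\int_\Omega f u^{\alpha}\varphi$ is controlled by $\big|\int_\Omega u^{\alpha}\,\mathrm{A}_p\cdot\nabla\varphi\big|$, which by the W-$p$-C inequality (\ref{W}) and Young's inequality I bound by $\epsilon\int u^{\alpha-1}\varphi\,\mathrm{A}_p\cdot\nabla u$ plus a term of the form $C\int u^{\alpha+p-1}\varphi^{1-p'}|\nabla\xi|^p\lambda^{p'}$; one must check the coercivity term can be reabsorbed using $-\alpha\int u^{\alpha-1}\varphi\,\mathrm{A}_p\cdot\nabla u$ — but that term has the wrong sign when $\alpha>0$, so instead in the absorption case one simply discards the left-hand coercivity contribution entirely and estimates the $\nabla\varphi$ term directly: $|\int u^{\alpha}\mathrm A_p\cdot\nabla\varphi|\le \lambda\int u^{\alpha}\xi^{\lambda-1}|\mathrm A_p||\nabla\xi|\le K_p^{-1/p}\lambda\int u^\alpha\xi^{\lambda-1}(\mathrm A_p\cdot\nabla u)^{1/p'}|\nabla\xi|$, and then Hölder with exponents $p,p'$ splits this into $(\int u^{\alpha-1}\xi^\lambda \mathrm A_p\cdot\nabla u)^{1/p'}$ — again needing reabsorption of a term that, for $\alpha>0$, is not available as a negative contribution. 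The correct route in the absorption case is the Hölder splitting with the three-factor product so that the $\mathrm A_p\cdot\nabla u$ factor is bounded via the equation: since $-\mathcal A_p u\ge f\ge 0$ is false here (we have the opposite sign), instead one uses that from (\ref{eqa}), testing with $\xi^\lambda$ gives $\int \mathrm A_p\cdot\nabla(\xi^\lambda)\le 0$... the cleaner bookkeeping is: set $\alpha>0$, keep the negative term $-\alpha\int u^{\alpha-1}\varphi\,\mathrm A_p\cdot\nabla u$ on the right, and observe it is $\le -\alpha K_p\int u^{\alpha-1}\varphi|\mathrm A_p|^{p'}$; then in the bound for $|\int u^\alpha\mathrm A_p\cdot\nabla\varphi|$ use Young's inequality in the form $|\mathrm A_p||\nabla\varphi| u^\alpha = (|\mathrm A_p|u^{(\alpha-1)/p'}\varphi^{1/p'})(|\nabla\varphi|\varphi^{-1/p'}u^{\alpha-(\alpha-1)/p'})\le \epsilon|\mathrm A_p|^{p'}u^{\alpha-1}\varphi + C_\epsilon |\nabla\varphi|^p\varphi^{1-p}u^{\alpha+p-1}$, choose $\epsilon<\alpha K_p$ to reabsorb, and arrive at $\int f u^\alpha\varphi\le C\int u^{\alpha+p-1}|\nabla\xi|^p\xi^{\lambda-p}$, valid provided $\lambda\ge p$ so that $\xi^{\lambda-p}\le 1$.

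The source case (\ref{eqs}) is symmetric with $\alpha\in(1-p,0)$: now $-\alpha>0$, the first right-hand term $-\alpha\int u^{\alpha-1}\varphi\,\mathrm A_p\cdot\nabla u\ge 0$ has the right sign to be kept, the left side is $\ge \int f u^\alpha\varphi$ (note $u^\alpha$ may blow up where $u=0$, hence the $(u+\varepsilon)^\alpha$ regularization), and the same Young/Hölder reabsorption yields the bound on $\int f(u+\varepsilon)^\alpha\varphi$; letting $\varepsilon\to 0$ and then bounding $u^\alpha\ge (\sup_{S_\xi}u)^\alpha$... here one must be more careful since $\alpha<0$, so instead one keeps $\int f u^\alpha\varphi$ and, because $f\ge 0$, one does not discard it — the final inequality then reads $\int_\Omega f u^\alpha\varphi\le C|S_\xi|\max|\nabla\xi|^p$, and one converts the weight $u^\alpha$ into the claimed form by Hölder: $\int f\varphi = \int (f u^\alpha\varphi)^{?}\cdots$ — concretely, write $\int_{S_\xi} f\varphi \le (\int_{S_\xi} f u^\alpha\varphi)^{\theta}(\int_{S_\xi} u^{\beta}\varphi)^{1-\theta}$ for appropriate $\theta,\beta$, but this is circular; the actual mechanism (as in \cite{BP}) is that in the source case the a priori bound is on $\int f u^\alpha$, and only after combining the two equations of the system does one recover a pure $\int f$ estimate. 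For the statement as given — which is the absorption-type conclusion (\ref{inksi}) with $(\oint_{S_\xi}u^\ell)^{(p-1)/\ell}$ on the right — the relevant case is (\ref{eqa}), and one finishes by Hölder on $\int_{S_\xi}u^{\alpha+p-1}$: choosing $\alpha+p-1 = \ell\cdot\frac{p-1}{\ell}$ is not quite it; rather one picks any $\ell > p-1$, sets $\alpha$ so that $\alpha + p - 1 \le \ell$ (possible for $\alpha$ small, $\alpha>0$), applies Hölder $\int_{S_\xi}u^{\alpha+p-1}\le |S_\xi|^{1-(\alpha+p-1)/\ell}(\int_{S_\xi}u^\ell)^{(\alpha+p-1)/\ell}$, and then — to get exactly the exponent $(p-1)/\ell$ — one instead uses the left-over factor $u^{\alpha}\ge$ (lower bound) from the left side, i.e. one does NOT discard $\int f u^\alpha\varphi\ge (\inf_{\mathrm{supp}\varphi}u)^\alpha\int f\varphi$; combining, $\int f\varphi \le C(\inf u)^{-\alpha}|S_\xi|\max|\nabla\xi|^p(\oint_{S_\xi}u^\ell)^{(\alpha+p-1)/\ell}\cdot|S_\xi|^{(\ell-\alpha-p+1)/\ell}$ and finally letting $\alpha\to 0^+$ kills the $(\inf u)^{-\alpha}$ and the extra power of $|S_\xi|$, leaving precisely (\ref{inksi}) with $\lambda(p,\ell)$ the threshold needed for the reabsorption ($\lambda\ge p$ suffices, refined to depend on $\ell$ through the admissible range of $\alpha$). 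The main obstacle is this last limiting argument $\alpha\to 0$ together with making the test function genuinely admissible under only the weak coercivity (\ref{W}) — this forces the truncation-and-pass-to-the-limit scheme and careful tracking that every constant stays independent of the truncation level $k$ and of $\varepsilon$.
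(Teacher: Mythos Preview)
Your intermediate step is essentially correct: testing with $u^{\alpha}\varphi$ (suitably regularized by truncation $\min(u,k)+\delta$ and mollification), using the W-$p$-C bound $|\mathrm{A}_p|\le K_p^{-1/p'}(\mathrm{A}_p\!\cdot\!\nabla u)^{1/p'}$, and reabsorbing via Young/H\"older does yield
\[
|\alpha|\int_\Omega u^{\alpha-1}\varphi\,\mathrm{A}_p\!\cdot\!\nabla u
\;+\;\int_\Omega f\,u^{\alpha}\varphi
\;\le\; C\,|S_\xi|\,\max|\nabla\xi|^p\left(\oint_{S_\xi}u^{\ell}\right)^{\frac{\alpha+p-1}{\ell}},
\]
for $\alpha\in(1-p,0)$ in case (\ref{eqs}) and $\alpha\in(0,1)$ in case (\ref{eqa}). (Your sign discussion is a bit tangled: in \emph{both} cases the gradient term appears on the left with coefficient $|\alpha|>0$, so reabsorption works uniformly.) This matches step (i) of the paper's proof.

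The genuine gap is the passage to (\ref{inksi}). Your plan is to send $\alpha\to 0^{+}$, using $\int f\,u^{\alpha}\varphi\ge(\inf_{\mathrm{supp}\,\varphi}u)^{\alpha}\int f\varphi$. This fails on two counts. First, the reabsorption constant behaves like $C\sim|\alpha|^{1-p}\to\infty$ as $\alpha\to 0$, since Young's inequality requires $\epsilon<|\alpha|K_p$. Second, nothing prevents $\inf_{\mathrm{supp}\,\varphi}u=0$, in which case $(\inf u)^{-\alpha}=\infty$. So the limit does not produce a finite constant.

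The paper's route is different and does not send $\alpha\to 0$. One \emph{fixes} an $\alpha=\alpha(p,\ell)$ bounded away from $0$, and notes that the display above also bounds the gradient quantity
\[
\int_\Omega u^{\alpha-1}\varphi\,\mathrm{A}_p\!\cdot\!\nabla u
\;\le\; C\,|S_\xi|^{1/\theta'}\max|\nabla\xi|^p\left(\int_{S_\xi}u^{\ell}\right)^{\frac{\alpha+p-1}{\ell}}.
\]
Then one tests the inequality with $\varphi$ alone (no power of $u$), obtaining $\int f\varphi\le\lambda\int\xi^{\lambda-1}|\mathrm{A}_p||\nabla\xi|$, inserts the identity $1=u_\delta^{(\alpha-1)/p'}u_\delta^{(1-\alpha)/p'}$, and splits by H\"older with exponents $p',p$:
\[
\int f\varphi\le C\left(\int u_\delta^{\alpha-1}\varphi\,\mathrm{A}_p\!\cdot\!\nabla u\right)^{1/p'}
\left(\int_{S_\xi}u_\delta^{(1-\alpha)(p-1)}\xi^{\lambda-p}|\nabla\xi|^p\right)^{1/p}.
\]
The first factor is controlled by the gradient estimate just obtained; the second by a further H\"older with $\tau=\ell/((1-\alpha)(p-1))$. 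The exponents then combine to give exactly $(p-1)/\ell$, with $\lambda(p,\ell)$ determined by $\lambda\ge p\tau'$. This two-step structure (first derive a gradient bound at fixed $\alpha\neq 0$, then feed it into the test with $\varphi$ alone) is the missing idea in your plan; it handles (\ref{eqs}) and (\ref{eqa}) simultaneously and avoids any degeneration of constants.
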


\begin{proof}
(i) First assume that $\ell>p-1+\alpha,$ with $\alpha\in\left(  1-p,0\right)
$ in case of equation (\ref{eqs}), $\alpha\in\left(  0,1\right)  $ (any
$\alpha>0$ if $u\in L_{loc}^{\infty}(\Omega))$ in case of equation
(\ref{eqa}). We claim that there exists $\lambda(p,\alpha,\ell)$ such that for
any $\lambda\geq\lambda(p,\alpha,\ell)$
\begin{equation}
\int_{\Omega}fu^{\alpha}\varphi\leqq C\left\vert S_{\xi}\right\vert
\max_{\Omega}|\nabla\xi|^{p}\left(  \oint_{S_{\xi}}u^{\ell}\varphi\right)
^{\frac{p-1+\alpha}{\ell}}, \label{inalp}%
\end{equation}
for some $C=C(N,p,K_{p},\alpha,\ell,\lambda).$ For proving (\ref{inalp}), one
can assume that $u^{\ell}\in L^{1}(B(x_{0},\rho)).$ Let $\varphi=\xi^{\lambda
}$, where $\lambda>0$ will be chosen after. Let $\delta>0,k\geq1,$ and
$(\eta_{n})$ be a sequence of mollifiers; we set $u_{\delta}=u+\delta,$
$u_{\delta,k}=\min(u,k)+\delta$ and approximate $u$ by $u_{\delta
,k,n}=u_{\delta,k}\ast\eta_{n}$, and we take $\phi=u_{\delta,k,n}^{\alpha
}\varphi$ as a test function. Then in any case, from (\ref{W}) and H\"{o}lder
inequality,
\begin{align*}
&  \left\vert \alpha\right\vert \int_{\Omega}u_{\delta,k,n}^{\alpha-1}%
\varphi\mathrm{A}_{p}(x,u,\nabla u).\nabla u_{\delta,k,n}+\int_{\Omega
}fu_{\delta,k,n}^{\alpha}\varphi\\
&  \leq\lambda\int_{S_{\xi}}u_{\delta,k,n}^{\alpha}\xi^{\lambda-1}%
|\mathrm{A}_{p}(x,u,\nabla u)||\nabla\xi|\\
&  \leq\lambda K_{p}^{-1/p^{\prime}}\int_{S_{\xi}}u_{\delta,k,n}^{\alpha}%
\xi^{\lambda-1}(\mathrm{A}_{p}(x,u,\nabla u).\nabla u)^{1/p^{\prime}}%
|\nabla\xi|\\
&  \leq\lambda K_{p}^{-1/p^{\prime}}\left(  \int_{S_{\xi}}u_{\delta
,k,n}^{\alpha-1}\xi^{\lambda}\mathrm{A}_{p}(x,u,\nabla u).\nabla u\right)
^{\frac{1}{p^{\prime}}}\left(  \int_{S_{\xi}}u_{\delta,k,n}^{\alpha+p-1}%
\xi^{\lambda-p}|\nabla\xi|^{p}\right)  ^{\frac{1}{p}}.
\end{align*}
Otherwise $(\nabla u_{\delta,k,n})$ tends to $\chi_{\left\{  u\leq k\right\}
}\nabla u$ in $L_{loc}^{p}(\Omega)$, and up to subsequence $a.e.$ in $\Omega,$
and $\mathrm{A}_{p}(x,u,\nabla u)\in L_{loc}^{p^{\prime}}(\Omega).$ By letting
$n\rightarrow\infty,$ we obtain
\begin{align*}
&  \left\vert \alpha\right\vert \int_{\left\{  u\leq k\right\}  }u_{\delta
,k}^{\alpha-1}\xi^{\lambda}\mathrm{A}_{p}(x,u,\nabla u).\nabla u+\int_{\Omega
}fu_{\delta,k}^{\alpha}\xi^{\lambda}\\
&  \leq\lambda K_{p}^{-1/p^{\prime}}\left(  \int_{S_{\xi}}u_{\delta,k}%
^{\alpha-1}\xi^{\lambda}\mathrm{A}_{p}(x,u,\nabla u).\nabla u\right)
^{\frac{1}{p^{\prime}}}\left(  \int_{S_{\xi}}u_{\delta,k}^{\alpha+p-1}%
\xi^{\lambda-p}|\nabla\xi|^{p}\right)  ^{\frac{1}{p}}\\
&  \leq\frac{\left\vert \alpha\right\vert }{2}\int_{S_{\xi}}u_{\delta
,k}^{\alpha-1}\xi^{\lambda}\mathrm{A}_{p}(x,u,\nabla u).\nabla u+C\int
_{S_{\xi}}u_{\delta,k}^{\alpha+p-1}\xi^{\lambda-p}|\nabla\xi|^{p},
\end{align*}
with $C=C(\alpha,K_{p},p,\lambda);$ otherwise, for $\alpha<1$ (or $u\in
L_{loc}^{\infty}(\Omega)$ and taking $k\geq\sup_{S_{\xi}}u)$
\begin{align*}
\int_{\Omega}u_{\delta,k}^{\alpha-1}\xi^{\lambda}\mathrm{A}_{p}(x,u,\nabla
u).\nabla u  &  =\int_{\left\{  u\leq k\right\}  }u_{\delta,k}^{\alpha-1}%
\xi^{\lambda}\mathrm{A}_{p}(x,u,\nabla u).\nabla u+\int_{\left\{  u>k\right\}
}u_{\delta,k}^{\alpha-1}\xi^{\lambda}\mathrm{A}_{p}(x,u,\nabla u).\nabla u\\
&  \leq\int_{\left\{  u\leq k\right\}  }u_{\delta,k}^{\alpha-1}\xi^{\lambda
}\mathrm{A}_{p}(x,u,\nabla u).\nabla u+Mk^{\alpha-1}%
\end{align*}
where $M=\int_{\Omega}\xi^{\lambda}\mathrm{A}_{p}(x,u,\nabla u).\nabla u$ (or
$M=0)$ is independent of $k$ and $\delta.$ Then, for any $\theta>1,$%
\begin{align*}
&  \frac{\left\vert \alpha\right\vert }{2}\int_{\left\{  u\leq k\right\}
}u_{\delta,k}^{\alpha-1}\xi^{\lambda}\mathrm{A}_{p}(x,u,\nabla u).\nabla
u+\int_{\Omega}fu_{\delta,k}^{\alpha}\xi^{\lambda}\leq C\int_{S_{\xi}%
}u_{\delta,k}^{\alpha+p-1}\xi^{\lambda-p}|\nabla\xi|^{p}+M\left\vert
\alpha\right\vert k^{\alpha-1}\\
&  \leq C\left(  \int_{S_{\xi}}u_{\delta,k}^{(\alpha+p-1)\theta}\xi^{\lambda
}\right)  ^{\frac{1}{\theta}}\left(  \int_{S_{\xi}}\xi^{\lambda-p\theta
^{\prime}}|\nabla\xi|^{p\theta^{\prime}}\right)  ^{\frac{1}{\theta^{\prime}}%
}+M\left\vert \alpha\right\vert k^{\alpha-1}.
\end{align*}
Choosing $\theta=\ell/(\alpha+p-1)>1,$ and $\lambda\geq\lambda(p,\alpha
,\ell)=p\theta^{\prime},$ we find
\begin{align*}
&  \frac{\left\vert \alpha\right\vert }{2}\int_{\left\{  u\leq k\right\}
}u_{\delta,k}^{\alpha-1}\xi^{\lambda}\mathrm{A}_{p}(x,u,\nabla u).\nabla
u+\int_{\Omega}fu_{\delta,k}^{\alpha}\xi^{\lambda}\\
&  \leq C\left(  \int_{S_{\xi}}u_{\delta,k}^{\ell}\varphi\right)
^{\frac{\alpha+p-1}{\ell}}\left(  \int_{S_{\xi}}|\nabla\xi|^{p\theta^{\prime}%
}\right)  ^{\frac{1}{\theta^{\prime}}}+M\left\vert \alpha\right\vert
k^{\alpha-1}\\
&  \leq C\left\vert S_{\xi}\right\vert ^{\frac{1}{\theta^{\prime}}}%
\max_{\Omega}|\nabla\xi|^{p}\left(  \int_{S_{\xi}}u_{\delta}^{\ell}%
\varphi\right)  ^{\frac{\alpha+p-1}{\ell}}+M\left\vert \alpha\right\vert
k^{\alpha-1},
\end{align*}
with a new constant $C=C(N,p,K,\alpha,\ell).$ As $k\rightarrow\infty,$ we
deduce
\begin{equation}
\frac{\left\vert \alpha\right\vert }{2}\int_{\Omega}u_{\delta}^{\alpha
-1}\varphi\mathrm{A}_{p}(x,u,\nabla u).\nabla u+\int_{\Omega}fu_{\delta
}^{\alpha}\varphi\leq C\left\vert S_{\xi}\right\vert ^{\frac{1}{\theta
^{\prime}}}\max_{\Omega}|\nabla\xi|^{p}\left(  \int_{S_{\xi}}u_{\delta}^{\ell
}\varphi\right)  ^{\frac{\alpha+p-1}{\ell}}. \label{chou}%
\end{equation}
Finally as $\delta\rightarrow0$ we get (\ref{inalp}) with a new constant $C.$
Moreover we deduce an estimate of the gradient terms:%
\begin{equation}
\frac{\left\vert \alpha\right\vert }{2}\int_{\Omega}u^{\alpha-1}%
\varphi\mathrm{A}_{p}(x,u,\nabla u).\nabla u\leq C\left\vert S_{\xi
}\right\vert ^{\frac{1}{\theta^{\prime}}}\max_{\Omega}|\nabla\xi|^{p}\left(
\int_{\Omega}u^{\ell}\varphi\right)  ^{\frac{\alpha+p-1}{\ell}}. \label{grat}%
\end{equation}

\noindent(ii) Next we only assume that $\ell>p-1,$ $u^{\ell}\in L^{1}%
(B(x_{0},\rho)).$ Let $\varphi$ as above, and fix some $\alpha=\alpha(p,\ell)$
such that $\alpha\in\left(  1-p,0\right)  $ and $(1-\alpha)(p-1)<\ell$ for
(\ref{eqs})$,$ $\alpha\in\left(  0,1\right)  $ and $\alpha+p-1<$ $\ell$ for
(\ref{eqa}). In any case $\tau=\ell/(1-\alpha)(p-1)>1,$ and $1/\theta
p^{\prime}+1/p\tau=(p-1)/\ell.$ Let $\lambda\geq\lambda(p,\alpha(p,\ell
),\ell)\geq p\tau^{\prime}.$ We take $\varphi$ as a test function and from
(\ref{chou}) we deduce successively, with new constants $C,$
\begin{align*}
\int_{\Omega}f\varphi &  \leq\lambda\int_{\Omega}\xi^{\lambda-1}%
|\mathrm{A}_{p}(x,u,\nabla u)|\left\vert \nabla\xi\right\vert \leq
C\int_{\Omega}\xi^{\lambda-1}|\mathrm{A}_{p}(x,u,\nabla u)|\left\vert
\nabla\xi\right\vert u_{\delta}^{\frac{\alpha-1}{p^{\prime}}}u_{\delta}%
^{\frac{1-\alpha}{p^{\prime}}}\\
&  \leq C\left(  \int_{S_{\xi}}u_{\delta}^{\alpha-1}|\mathrm{A}_{p}(x,u,\nabla
u)|^{p^{\prime}}\varphi\right)  ^{\frac{1}{p^{\prime}}}\left(  \int_{S_{\xi}%
}u_{\delta}^{(1-\alpha)(p-1)}\xi^{\lambda-p}|\nabla\xi|^{p}\right)  ^{\frac
{1}{p}}\\
&  \leq C\left(  \int_{S_{\xi}}u_{\delta}^{\alpha-1}\varphi\mathrm{A}%
_{p}(x,u,\nabla u).\nabla u\right)  ^{\frac{1}{p^{\prime}}}\left(
\int_{S_{\xi}}u_{\delta}^{\ell}\varphi\right)  ^{\frac{1}{p\tau}}\left(
\int_{S_{\xi}}\xi^{\lambda-p\tau^{\prime}}|\nabla\xi|^{p\tau^{\prime}}\right)
^{\frac{1}{p\tau^{\prime}}}\\
&  \leq C\left\vert S_{\xi}\right\vert ^{\frac{1}{\theta^{\prime}p^{\prime}%
}+\frac{1}{p\tau^{\prime}}}\max_{\Omega}|\nabla\xi|^{p}\left(
%TCIMACRO{\dint _{S_{\xi}}}%
%BeginExpansion
{\displaystyle\int_{S_{\xi}}}
%EndExpansion
u_{\delta}^{\ell}\varphi\right)  ^{\frac{1}{p^{\prime}\theta}+\frac{1}{p\tau}%
}\\
&  \leqq C\left\vert S_{\xi}\right\vert ^{1-\frac{p-1}{\ell}}\max_{\Omega
}|\nabla\xi|^{p}\left(
%TCIMACRO{\dint _{S_{\xi}}}%
%BeginExpansion
{\displaystyle\int_{S_{\xi}}}
%EndExpansion
u_{\delta}^{\ell}\varphi\right)  ^{\frac{p-1}{\ell}};
\end{align*}
and (\ref{inksi}) follows as $\delta\rightarrow0.$
\end{proof}

\begin{corollary}
\label{ball} Under the assumptions of Proposition \ref{Y}, consider any ball
$B(x_{0},2\rho)\subset\Omega,$ and any $\varepsilon\in\left(  0,\frac{1}%
{2}\right]  .$ Let $\varphi=\xi^{\lambda}$ with $\xi$ such that
\begin{equation}
\xi=1\text{ in }B(x_{0},\rho),\quad\quad\xi=0\text{ in }\Omega\backslash
\bar{B}(x_{0},\rho(1+\varepsilon))\quad\quad|\nabla\xi|\leq\frac{C_{0}%
}{\varepsilon\rho}. \label{phi}%
\end{equation}
\medskip Then for any $\ell>p-1,$ there exists $\lambda(p,\ell)>0$ such that
for $\lambda\geq\lambda(p,\ell),$ there exists $C=C(N,p,K,\ell,\lambda)>0$
such that
\begin{equation}
\oint_{\varphi}f\leq C(\varepsilon\rho)^{-p}\left(  \oint_{\varphi}u^{\ell
}\right)  ^{\frac{p-1}{\ell}}. \label{ino}%
\end{equation}

\end{corollary}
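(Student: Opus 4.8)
The plan is to deduce Corollary \ref{ball} from Proposition \ref{Y} by a direct specialization, since Corollary \ref{ball} is essentially just the statement of Proposition \ref{Y} rewritten for a concrete cutoff function on concentric balls, with the averages taken with respect to $\varphi$ rather than on $S_\xi$. The main work is bookkeeping with the constants, not any new analytic input.

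**First I would** fix the ball $B(x_0,2\rho)\subset\Omega$ and $\varepsilon\in(0,\tfrac12]$, and choose $\xi\in\mathcal D(\Omega)$ satisfying (\ref{phi}); such a $\xi$ exists by standard mollification, with $C_0$ a universal constant. With this choice one has $S_\xi=\mathrm{supp}|\nabla\xi|\subset \bar B_{\rho(1+\varepsilon)}\setminus B_\rho$, hence $|S_\xi|\leq |B_{\rho(1+\varepsilon)}|\leq C(N)\rho^N$ (using $\varepsilon\leq\tfrac12$), and $\max_\Omega|\nabla\xi|^p\leq C_0^p(\varepsilon\rho)^{-p}$. Plugging these two bounds into (\ref{inksi}) from Proposition \ref{Y} gives, for $\ell>p-1$ and $\lambda\geq\lambda(p,\ell)$,
\[
\int_\Omega f\varphi \leq C(N,p,K_p,\ell,\lambda)\,\rho^N(\varepsilon\rho)^{-p}\left(\oint_{S_\xi}u^\ell\right)^{\frac{p-1}{\ell}}.
\]

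**Next I would** convert both the left-hand integral and the right-hand average over $S_\xi$ into averages with respect to $\varphi$. On the left, since $\varphi=\xi^\lambda$ is supported in $\bar B_{\rho(1+\varepsilon)}$ and equals $1$ on $B_\rho$, we have $\int_\Omega\varphi\geq |B_\rho|=c(N)\rho^N$, so $\int_\Omega f\varphi = \left(\int_\Omega\varphi\right)\oint_\varphi f\geq c(N)\rho^N\oint_\varphi f$. On the right, I want to dominate $\oint_{S_\xi}u^\ell$ by $\oint_\varphi u^\ell$. This is the one point needing a small argument: $\oint_{S_\xi}u^\ell = |S_\xi|^{-1}\int_{S_\xi}u^\ell$, while $\oint_\varphi u^\ell=\left(\int_\Omega\varphi\right)^{-1}\int_\Omega u^\ell\varphi$. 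The cleanest route is to not pass through $\oint_\varphi u^\ell$ at all on the right of (\ref{inksi}) but rather re-run the proof of Proposition \ref{Y} keeping the weight $\varphi$ inside the average — indeed inequality (\ref{inalp}) and its $\alpha$-free descendant already produce $\left(\oint_{S_\xi}u^\ell\varphi\right)^{(p-1)/\ell}$ before one throws the weight away; alternatively, since on $S_\xi$ one has $0\leq\varphi\leq 1$ there is no pointwise domination in the needed direction, so the correct statement is genuinely with $\oint_\varphi u^\ell$ on the right, and the simplest fix is to observe that the proof of Proposition \ref{Y} in fact gives $\int_\Omega f\varphi\leq C|S_\xi|\max|\nabla\xi|^p\left(\oint_\varphi u^\ell\right)^{(p-1)/\ell}$ because every occurrence of $u^\ell$ in the final chain of inequalities already carries the factor $\varphi=\xi^\lambda$, and $\int_{S_\xi}u_\delta^\ell\varphi\leq \int_\Omega u_\delta^\ell\varphi=\left(\int_\Omega\varphi\right)\oint_\varphi u^\ell$ while $|S_\xi|^{-1}\int_\Omega\varphi\geq$ a constant times $|S_\xi|^{-1}|B_\rho|\geq c(N)$ (again by $\varepsilon\leq\tfrac12$). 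Combining, $\oint_\varphi f\leq C(N,p,K_p,\ell,\lambda)(\varepsilon\rho)^{-p}\left(\oint_\varphi u^\ell\right)^{(p-1)/\ell}$, which is (\ref{ino}).

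**The main obstacle** I anticipate is exactly this last weight-tracking step: making sure that the average on the right of the final estimate is taken against the \emph{same} weight $\varphi$ as on the left, which forces one to either cite the intermediate inequality (\ref{inalp}) of the proof (which still carries $\varphi$ inside) rather than only the thrown-away form (\ref{inksi}), or to absorb the discrepancy between $|S_\xi|$ and $\int_\Omega\varphi$ using $\varepsilon\leq\tfrac12$ so that $\rho^N$ and $(\varepsilon\rho)^N$-type quantities are comparable up to a dimensional constant. Once the relation $c(N)\rho^N\leq\int_\Omega\varphi\leq C(N)\rho^N$ and $|S_\xi|\leq C(N)\rho^N$ are in hand, the factor $\rho^N$ cancels between the two sides and only $(\varepsilon\rho)^{-p}$ survives, so no $\varepsilon$-dependence enters the constant $C$ beyond what is already displayed.
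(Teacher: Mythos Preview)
Your proposal is correct and is exactly what the paper intends: the corollary is stated in the paper without a separate proof, being an immediate specialization of Proposition~\ref{Y} to the cutoff (\ref{phi}). You have correctly identified the only point that requires care, namely that (\ref{inksi}) carries $\oint_{S_\xi}u^\ell$ rather than $\oint_\varphi u^\ell$ on the right, and you resolve it in the right way---by going back to the intermediate inequalities (\ref{inalp}) and (\ref{chou}) in the proof of Proposition~\ref{Y}, where the weight $\varphi=\xi^\lambda$ is still present inside the $u^\ell$-integral, together with the comparisons $|S_\xi|\leq C(N)\rho^N$ and $\int_\Omega\varphi\geq|B_\rho|=c(N)\rho^N$ valid for $\varepsilon\leq\tfrac12$.
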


\begin{remark}
\label{part} If $S_{\xi}=\cup_{i=1}^{k}S_{\xi}^{i}$ where the $S_{\xi}^{i}$
are $2$ by $2$ disjoint, then (\ref{inksi}) can be replaced by
\begin{equation}
\int_{\Omega}f\varphi\leqq C%
%TCIMACRO{\dsum _{i=1}^{k}}%
%BeginExpansion
{\displaystyle\sum_{i=1}^{k}}
%EndExpansion
\left\vert S_{\xi}^{i}\right\vert \max_{S_{\xi}^{i}}|\nabla\xi|^{p}\left(
\oint_{S_{\xi}^{i}}u^{\ell}\right)  ^{\frac{p-1}{\ell}}. \label{inde}%
\end{equation}

\end{remark}

\subsection{Punctual estimates under strong conditions}

When $\mathcal{A}_{p}$ is S-$p$-C, the estimate (\ref{grat}) of the gradient
is the beginning of the proof of the well-known weak Harnack inequalities:

\begin{theorem}
[\cite{S1}, \cite{Tr}](i) Let $\mathcal{A}_{p}$ be S-$p$-C, and $u\in
W_{loc}^{1,p}\left(  \Omega\right)  $ be nonnegative, such that
\[
-\mathcal{A}_{p}u\leqq0\quad\quad\text{in }\Omega;
\]
then for any ball $B(x_{0},3\rho)\subset\Omega,$ and any $\ell>p-1,$%
\begin{equation}
\sup_{B(x_{0},\rho)}u\leq C\left(  \oint_{B(x_{0},2\rho)}u^{\ell}\right)
^{\frac{1}{\ell}}, \label{uh}%
\end{equation}
with $C=C(N,p,\ell,K_{1,p},K_{2,p}).$ \medskip

\noindent(ii) Let $w\in W_{loc}^{1,p}\left(  \Omega\right)  $ be nonnegative,
such that%
\[
-\mathcal{A}_{p}w\geq0\quad\quad\text{in }\Omega;
\]
then for any ball $B(x_{0},3\rho)\subset\Omega,$ for any $\ell\in\left(
0,N(p-1)/(N-p)\right)  $
\begin{equation}
\left(  \oint_{B(x_{0},2\rho)}v^{\ell}\right)  ^{\frac{1}{\ell}}\leq
C\inf_{B(x_{0},\rho)}v. \label{wh}%
\end{equation}

\end{theorem}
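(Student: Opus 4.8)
The plan is to recall that these are the classical interior Harnack estimates of Serrin \cite{S1} and Trudinger \cite{Tr}, and to indicate how the integral machinery of Proposition \ref{Y} furnishes the starting point in the present S-$p$-C setting. For the subsolution estimate (\ref{uh}), I would begin exactly as in part (i) of the proof of Proposition \ref{Y}: multiply $-\mathcal{A}_p u\leqq 0$ by $u_\delta^{\alpha}\varphi$ with $\alpha>0$, and use strong $p$-coercivity $\mathrm{A}_p(x,u,\eta).\eta\geq K_{1,p}|\eta|^p$ to convert the gradient estimate (\ref{grat}) into a genuine bound on $\int |\nabla(u_\delta^{(\alpha+p-1)/p}\varphi^{1/p})|^p$. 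Feeding this into the Sobolev inequality produces a reverse-H\"older inequality relating the $L^{s^\ast}$ norm of a power of $u$ on a slightly smaller ball to its $L^{s}$ norm on a slightly larger one, with $s^\ast/s=N/(N-p)>1$. A Moser iteration over a shrinking family of balls then yields $\sup_{B(x_0,\rho)}u\leq C(\oint_{B(x_0,2\rho)}u^{\ell_0})^{1/\ell_0}$ for some fixed $\ell_0>p-1$; the passage from this particular exponent to an arbitrary $\ell>p-1$ is a standard interpolation argument (for $\ell\geq\ell_0$ by H\"older plus the known bound, for $p-1<\ell<\ell_0$ by a Young-inequality absorption trick, using that the estimate already holds on all intermediate balls).

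For the supersolution estimate (\ref{wh}) I would take $-\mathcal{A}_p w\geqq 0$ and test with $w_\delta^{\alpha}\varphi$ using a \emph{negative} exponent $\alpha\in(1-p,0)$, which is the source-type case already set up in part (i)/(ii) of Proposition \ref{Y}. Two distinct iterations are needed. First, again by S-$p$-C and Sobolev, one obtains a reverse-H\"older chain for negative powers $w^{-\beta}$, $\beta>0$, which by Moser iteration downward gives $(\oint_{B(x_0,2\rho)}w^{\ell})^{1/\ell}\leq C\,(\oint_{B(x_0,3\rho/2)}w^{-\beta_0})^{-1/\beta_0}$ for small $\beta_0>0$ and all $\ell$ in the stated range $(0,N(p-1)/(N-p))$; the restriction on $\ell$ is exactly the range in which the positive-power iteration for $w^{\beta}$, $\beta\in(0,\beta_0]$, can be run to connect $(\oint w^{\ell})^{1/\ell}$ with $(\oint w^{\beta_0})^{1/\beta_0}$. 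Second, to pass from a small positive power to a small negative power one invokes the John–Nirenberg lemma: testing with $\alpha=-1$ (i.e. $w_\delta^{-1}\varphi$) shows that $\log w$ has bounded mean oscillation on $B(x_0,3\rho/2)$, whence $\oint w^{\beta_0}\cdot\oint w^{-\beta_0}\leq C$ for $\beta_0$ small enough. Chaining the three pieces gives (\ref{wh}), and then $\inf_{B(x_0,\rho)}w\geq c(\oint_{B(x_0,2\rho)}w^{\ell})^{1/\ell}$.

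The main obstacle, and the reason one cites \cite{S1,Tr} rather than reproducing everything, is the John–Nirenberg step: establishing the BMO bound for $\log w$ requires a careful Caccioppoli estimate with the logarithmic test function and a covering argument, and it is precisely here that strong coercivity (both inequalities in the S-$p$-C condition, controlling $|\mathrm{A}_p|$ from above by $(\mathrm{A}_p.\eta)^{1/p'}$ and below by $|\eta|^p$) is used in an essential, non-negotiable way — the weak-$p$-coercivity of Proposition \ref{Y} is not enough. Everything else (Caccioppoli, Sobolev, Moser iteration, exponent interpolation) is routine once (\ref{grat}) is in hand; I would simply remark that the constants depend only on $N,p,\ell,K_{1,p},K_{2,p}$ and refer to Serrin and Trudinger for the details, noting that the argument applies verbatim to the structure-condition operators considered here since only the two S-$p$-C inequalities enter.
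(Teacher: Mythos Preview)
The paper does not actually prove this theorem: it is stated as a known result with citations to Serrin \cite{S1} and Trudinger \cite{Tr}, accompanied only by the remark that ``the estimate (\ref{grat}) of the gradient is the beginning of the proof of the well-known weak Harnack inequalities.'' Your proposal is a correct outline of the classical Moser-iteration proof (Caccioppoli from (\ref{grat}), Sobolev, iteration, John--Nirenberg for the supersolution half), which is precisely what the cited references contain and what the paper's one-line remark points to; there is nothing further to compare.
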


Next we give a more precise version of weak Harnack inequality (\ref{uh}).
Such a kind of inequality was first established in the parabolic case in
\cite{DiBe}.

\begin{lemma}
\label{Har} Let $\mathcal{A}_{p}$ be S-$p$-C, and $u\in W_{loc}^{1,p}\left(
\Omega\right)  $ be nonnegative, such that
\[
-\mathcal{A}_{p}u\leqq0\quad\quad\text{in }\Omega;
\]
then for any $s>0$, there exists a constant $C=C(N,p,s,K_{1,p},K_{2,p})$, such
that for any ball $B(x_{0},2\rho)\subset\Omega$ and any $\varepsilon\in\left(
0,\frac{1}{2}\right]  ,$
\begin{equation}
\sup_{B(x_{0},\rho)}u\leq C\varepsilon^{-\frac{Np^{2}}{s^{2}}}\left(
\oint_{B(x_{0},\rho(1+\varepsilon))}u^{s}\right)  ^{\frac{1}{s}}. \label{hari}%
\end{equation}

\end{lemma}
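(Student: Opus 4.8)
The plan is to bootstrap the standard weak Harnack inequality (\ref{uh}) from a fixed exponent $\ell>p-1$ down to an arbitrary small $s>0$, while simultaneously keeping explicit track of how the constant depends on the shrinking factor $\varepsilon$. First I would fix $\ell = p$ (or any convenient value $>p-1$), for which (\ref{uh}) holds with a constant depending only on $N,p,K_{1,p},K_{2,p}$. The idea is an interpolation-iteration scheme à la De Giorgi/Moser: for a chain of radii $\rho_j = \rho(1+\varepsilon)(1 - 2^{-j}(1-1/(1+\varepsilon)))$ decreasing from $\rho(1+\varepsilon)$ down to $\rho$, apply (\ref{uh}) on each pair $B(x_0,\rho_{j+1})\subset B(x_0,\rho_j)$ (after rescaling so that the tripled ball fits, which costs only a fixed dilation constant), to get
\[
\sup_{B(x_0,\rho_{j+1})} u \leq C\Big(\oint_{B(x_0,\rho_j)} u^{\ell}\Big)^{1/\ell}.
\]

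Next I would convert the $L^\ell$ average into an $L^s$ average with $s<\ell$ by the standard trick
\[
\oint_{B(x_0,\rho_j)} u^{\ell} \leq \Big(\sup_{B(x_0,\rho_j)} u\Big)^{\ell - s}\oint_{B(x_0,\rho_j)} u^{s},
\]
and absorb the sup-term using Young's inequality with a small parameter. The volume ratio $|B(x_0,\rho_j)|/|B(x_0,\rho(1+\varepsilon))|$ and the ratio $\rho_j/\rho_{j+1}$ are both controlled, and replacing the $L^s$ average over $B(x_0,\rho_j)$ by the $L^s$ average over the full ball $B(x_0,\rho(1+\varepsilon))$ costs a factor $(\rho(1+\varepsilon)/\rho_j)^{N/s}$, which is bounded by a constant since $\rho_j \geq \rho$. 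Setting $M_j = \sup_{B(x_0,\rho_j)} u$ and $A = (\oint_{B(x_0,\rho(1+\varepsilon))} u^s)^{1/s}$, this yields a recursion of the form
\[
M_{j+1} \leq \tfrac12 M_j + C\, b^{j}\, A,
\]
where $b>1$ comes from the geometric factors $(\rho_j/\rho_{j+1})^{N/\min(\ell,s)}$ accumulated at step $j$, together with the Young-inequality constant raised to a power growing linearly in $j$. Iterating the recursion and summing the geometric-type series $\sum 2^{-j} b^{j}$ (which converges after one more Young split, or one simply telescopes $M_0$ away since $M_0 \leq M$ is finite by (\ref{uh}) applied once on a slightly larger ball) gives $M_{\text{final}} = \sup_{B(x_0,\rho)} u \leq C\, A$ with a constant of the claimed form.

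The delicate point is the $\varepsilon$-dependence of the constant: the claim is $C\varepsilon^{-Np^2/s^2}$. The exponent $-N/s$ appears already from a single application of a volume-ratio estimate (the annulus $B(\rho(1+\varepsilon))\setminus B(\rho)$ has measure comparable to $\varepsilon\rho^N$), but the power $p^2/s$ extra comes from the iteration: at each of the roughly $\log(1/\varepsilon)$ (or, more precisely, the full infinite chain with geometrically decreasing gaps) steps one loses a factor whose exponent is tied to the Moser exponent $p^* / p = N/(N-p)$ ratio, i.e. to $p$, and the number of effective iterations needed to pass from exponent $\ell$ down to $s$ scales like $\log(\ell/s)/\log(p^*/(p-1))$, contributing the second power of $p$ and the second power of $1/s$. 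So the main obstacle — and the part requiring genuine care rather than routine estimation — is organizing the iteration so that the geometric series of loss-factors converges and the accumulated exponent of $\varepsilon$ is exactly (or at worst bounded by) $Np^2/s^2$; one must choose the Young parameter at step $j$ proportional to $2^{-j}$ (not constant) and check that $\prod_j (\text{Young const})_j^{1/\ell}$ stays finite with the stated blow-up rate as $s\to 0$. Everything else — the covering/rescaling to legitimize applying (\ref{uh}) on sub-balls, the volume comparisons, the final relabeling — is standard.
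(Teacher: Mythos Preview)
Your overall strategy---fix $\ell=p$, interpolate $\oint u^{\ell}\le(\sup u)^{\ell-s}\oint u^{s}$, absorb the $\sup$ by Young, and iterate over a geometric chain of radii---is exactly the paper's strategy. But the execution has a real gap in two coupled places: the direction of the recursion and the choice of the Young parameter.

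First, your formula $\rho_{j}=\rho(1+\varepsilon)\bigl(1-2^{-j}(1-\tfrac{1}{1+\varepsilon})\bigr)$ actually gives $\rho_{0}=\rho$ and $\rho_{j}\uparrow\rho(1+\varepsilon)$, the opposite of what you describe. More importantly, once you apply (\ref{uh}) at step $j$ with gap $\sim 2^{-j}\varepsilon\rho$, the Harnack constant is $\sim(2^{j}/\varepsilon)^{N}$, so after Young the additive correction carries a factor $b^{j}$ with $b=(2/\varepsilon)^{N\ell/s}\gg 2$. A recursion $M_{j+1}\le\tfrac12 M_{j}+Cb^{j}A$ (or the outward version $M_{j}\le\tfrac12 M_{j+1}+Cb^{j}A$) with a \emph{fixed} absorption constant $\tfrac12$ then produces the series $\sum 2^{-k}b^{k}$, which diverges. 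Your suggested fix $\eta_{j}\propto 2^{-j}$ does not obviously rescue this: the Young penalty $\eta_{j}^{-(\ell-s)/s}$ then grows like $2^{j(\ell-s)/s}$, and you have not checked that the resulting product--sum is finite, let alone that it yields the exponent $Np^{2}/s^{2}$.

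The paper's resolution is to iterate \emph{outward} with $\rho_{n}=\rho(1+\tfrac{\varepsilon}{2}+\cdots+(\tfrac{\varepsilon}{2})^{n})$, set $M_{n}=\sup_{B(x_{0},\rho_{n})}u^{p}$, and take the Young parameter to be a \emph{constant} $\delta=\tfrac{1}{2b}$ with $b=(2/\varepsilon)^{Np/r}$, $r=s/p$. Then $M_{n}\le\delta M_{n+1}+\kappa b^{n+1}(\oint u^{s})^{p/s}$ and, since $\delta b=\tfrac12$, the telescoped sum is simply $\kappa b\sum(\delta b)^{i}=2\kappa b$. The exponent $Np^{2}/s^{2}$ then falls out algebraically from $(2\kappa b)^{1/p}$: because $\delta=\tfrac{1}{2b}$ forces $\kappa\sim b^{1/r-1}$, one gets $(\kappa b)^{1/p}\sim b^{1/(rp)}=(2/\varepsilon)^{N/r^{2}}=(2/\varepsilon)^{Np^{2}/s^{2}}$. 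Your heuristic for the exponent (Moser ratio $p^{*}/p$, number of iterations $\sim\log(\ell/s)$) is not where it comes from; it is purely the interaction of the $\varepsilon$-dependent $b$ with the choice $\delta=1/(2b)$ and the Young exponent $1/r$.
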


\begin{proof}
From a slight adaptation of the usual case where $\varepsilon=\frac{1}{2},$
for any $\ell>p-1,$ there exists $C=C(N,\ell)>0$ such that for any
$\varepsilon\in\left(  0,\frac{1}{2}\right)  ,$
\begin{equation}
\sup_{B(x_{0},\rho)}u\leqq C\varepsilon^{-N}\left(  \oint_{B(x_{0}%
,\rho(1+\varepsilon))}u^{\ell}\right)  ^{\frac{1}{\ell}}. \label{num}%
\end{equation}
Thus we can assume $s\leq p-1.$ We fix for example $\ell=p,$ and define a
sequence $(\rho_{n})$ by $\rho_{0}=\rho,$ and $\rho_{n}=\rho(1+\frac
{\varepsilon}{2}+...+(\frac{\varepsilon}{2})^{n})$ for any $n\geq1,$ and we
set $M_{n}=\sup_{B(x_{0},\rho_{n})}u^{p}.$ From (\ref{num}) we obtain, with
new constants $C=C(N,p),$
\[
M_{n}\leqq C(\frac{\rho_{n+1}}{\rho_{n}}-1)^{-Np}\oint_{B(x_{0},\rho_{n+1}%
)}u^{p}\leq C(\frac{\varepsilon}{2})^{-(n+1)Np}\oint_{B(x_{0},\rho_{n+1}%
)}u^{p}.
\]
From the Young inequality, for any $\delta\in\left(  0,1\right)  $, and any
$r<1,$ we obtain
\begin{align*}
M_{n}  &  \leqq C(\frac{\varepsilon}{2})^{-(n+1)Np}M_{n+1}^{1-r}\oint
_{B(x_{0},\rho_{n+1})}u^{pr}\\
&  \leqq\delta M_{n+1}+r\delta^{1-1/r}(C(\frac{\varepsilon}{2})^{-(n+1)Np}%
)^{\frac{1}{r}}\left(  \oint_{B(x_{0},\rho_{n+1})}u^{pr}\right)  ^{\frac{1}%
{r}}.
\end{align*}
Defining $\kappa=$ $r\delta^{1-1/r}C{}^{\frac{1}{r}}$ and $b=(\frac
{\varepsilon}{2})^{-Np/r}$, we find
\[
M_{n}\leqq\delta M_{n+1}+b^{n+1}\kappa\left(  {\oint_{B(x_{0},\rho_{n+1})}%
}u^{pr}\right)  ^{\frac{1}{r}}.
\]
Taking $\delta=\frac{1}{2b}$ and iterating, we obtain
\begin{align*}
M_{0}  &  =\sup_{B(x_{0},\rho)}u^{p}\leqq\delta^{n+1}M_{n+1}+b\kappa%
%TCIMACRO{\dsum _{i=0}^{n}}%
%BeginExpansion
{\displaystyle\sum_{i=0}^{n}}
%EndExpansion
(\delta b)^{i}\left(  {\oint_{B(x_{0},\rho_{n+1})}}u^{pr}\right)  ^{\frac
{1}{r}}\\
&  \leqq\delta^{n+1}M_{n+1}+2b\kappa\left(  \oint_{B(x_{0},\rho_{n+1})}%
u^{pr}\right)  ^{\frac{1}{r}}.
\end{align*}
Since $B(x_{0},\rho_{n+1})\subset B(x_{0},\rho(1+\varepsilon)),$ going to the
limit as $n\rightarrow\infty,$ and returning to $u,$ we deduce
\[
\sup_{B(x_{0},\rho)}u\leq(2b\kappa)^{1/p}\left(  {\oint_{B(x_{0}%
,\rho(1+\varepsilon))}}u^{pr}\right)  ^{\frac{1}{rp}},
\]
and the conclusion follows by taking $r=s/p.\medskip$
\end{proof}

It is interesting to make the link between Proposition \ref{Y}, with the
powerful estimates issued from the potential theory, involving
\textit{W\"{o}lf potentials}, proved in \cite{KilMa1}, \cite{KilMa2} and
\cite{KilZh}. Here we show that the lower estimates hold for any S-$p$-C operator.

\begin{corollary}
\label{wf} Suppose that $\mathcal{A}_{p}$ is S-$p$-C. Let $f\in L_{loc}%
^{1}(\Omega),$ $f\geq0$ and $u\in W_{loc}^{1,p}(\Omega)$ be any nonnegative
such that
\[
-\mathcal{A}_{p}u\geqq f,\qquad\text{in }\Omega;
\]
then for any ball $B(x_{0},2\rho)\subset\Omega,$
\begin{equation}
CW_{1,p}^{f}(B(x_{0},\rho))+\inf_{B(x_{0},2\rho)}u\leq\underset{x\rightarrow
x_{0}}{\text{ }\;\lim\inf}\;u(x),\text{ } \label{fra}%
\end{equation}
where $W_{1,p}^{f}$ is the W\"{o}lf potential of $f$ defined at (\ref{wolf}),
and $C=C(N,p,K_{1,p},K_{2,p})$. If $u$ satisfies (\ref{eqa}), then
\begin{equation}
CW_{1,p}^{f}(B(x_{0},\rho))+\underset{x\rightarrow x_{0}}{\;\lim\sup
\;}u(x)\leq\sup_{B(x_{0},2\rho)}u. \label{fri}%
\end{equation}

\end{corollary}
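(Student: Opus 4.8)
The plan is to deduce Corollary \ref{wf} from the integral estimate of Corollary \ref{ball} (i.e. inequality (\ref{ino})) combined with the weak Harnack inequalities (\ref{uh}) and (\ref{wh}), by a dyadic summation that reconstructs the Wolf potential. I will treat the source case (\ref{eqs}) first; the absorption case (\ref{eqa}) is entirely parallel with the roles of $\sup$ and $\inf$ and of the two weak Harnack inequalities exchanged.

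First I would fix $B(x_0,2\rho)\subset\Omega$ and introduce the dyadic radii $r_j=2^{-j}\rho$ for $j\geq 0$, together with the annular cutoffs: for each $j$ apply Corollary \ref{ball} on the ball $B(x_0,2r_{j+1})=B(x_0,r_j)$ with a fixed choice of $\varepsilon$ (say $\varepsilon=\tfrac12$) and a fixed exponent $\ell\in(p-1,N(p-1)/(N-p))$, which is admissible since $p<N$. This gives
\[
\oint_{B(x_0,r_{j+1})} f \;\leq\; C\, r_{j+1}^{-p}\left(\oint_{B(x_0,r_{j+1})} u^{\ell}\right)^{\frac{p-1}{\ell}}.
\]
Since $u$ is a nonnegative supersolution ($-\mathcal{A}_p u\geq f\geq 0$, so in particular $-\mathcal{A}_p u\geq 0$), the weak Harnack inequality (\ref{wh}) applied on $B(x_0,3r_{j+1})\subset B(x_0,2\rho)$ (adjusting the dyadic scale by a harmless constant, or working with radii $r_j=3^{-j}\rho$ instead) yields
\[
\left(\oint_{B(x_0,r_{j+1})} u^{\ell}\right)^{\frac{1}{\ell}}\;\leq\; C\,\inf_{B(x_0,r_{j+2})} u \;\leq\; C\,\liminf_{x\to x_0} u(x).
\]
Raising to the power $p-1$ and combining with the previous display, I get, for every $j$,
\[
r_{j+1}^{p}\oint_{B(x_0,r_{j+1})} f \;\leq\; C\left(\liminf_{x\to x_0} u(x)\right)^{p-1},
\]
hence $\bigl(r_{j+1}^{p}\oint_{B(x_0,r_{j+1})} f\bigr)^{1/(p-1)}\leq C\,\liminf_{x\to x_0} u(x)$, with $C$ depending only on $N,p,K_{1,p},K_{2,p}$ (the fixed $\ell$ and $\varepsilon$ are absorbed).

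The last step is to recognize the left-hand side of (\ref{fra}) as (a constant times) the sum over $j$ of these quantities. Indeed, on each dyadic interval $t\in(r_{j+1},r_j)$ one has $t^p\oint_{B(x_0,t)} f \leq 2^p\, r_{j+1}^p \oint_{B(x_0,r_{j+1})} f$ (using monotonicity of $t\mapsto \int_{B(x_0,t)}f$ and $t< r_j=2r_{j+1}$, together with $|B(x_0,t)|\geq |B(x_0,r_{j+1})|$), so that
\[
W_{1,p}^{f}(B(x_0,\rho))=\sum_{j\geq 0}\int_{r_{j+1}}^{r_j}\left(t^p\oint_{B(x_0,t)}f\right)^{\frac{1}{p-1}}\frac{dt}{t}\;\leq\; C\sum_{j\geq 0}\left(r_{j+1}^p\oint_{B(x_0,r_{j+1})}f\right)^{\frac{1}{p-1}},
\]
and the geometric-type decay is supplied not by the sum itself — which could a priori diverge — but by the observation that the corollary is only asserting the finite quantity $CW_{1,p}^f(B(x_0,\rho))$ is bounded by $\liminf u$, and the bound follows once one notes that the potential telescopes against $\inf_{B(x_0,2\rho)}u$; the clean way is to sum the estimate $(r_{j+1}^p\oint f)^{1/(p-1)}\leq C(\inf_{B(x_0,r_{j+2})}u - \inf_{B(x_0,2\rho)}u)$-type increments. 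More simply: adding $\inf_{B(x_0,2\rho)}u$ and using that $\inf_{B(x_0,r_{j+2})}u\uparrow \liminf_{x\to x_0}u(x)$ gives (\ref{fra}) directly from the per-scale bound, since each term is controlled by $C\,\liminf_{x\to x_0}u(x)$ and the Wolf potential integral converges precisely when $u$ is finite near $x_0$.

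The main obstacle I anticipate is the bookkeeping that turns the \emph{per-scale} bounds into a bound on the \emph{sum} defining $W_{1,p}^f$: a naive summation loses a factor growing in $j$. The right fix — and the one the authors surely intend — is to exploit that $t\mapsto \inf_{B(x_0,t)}u$ is monotone, so the geometric series of increments telescopes; concretely one shows $\bigl(r_{j+1}^p\oint_{B(x_0,r_{j+1})}f\bigr)^{1/(p-1)} + \inf_{B(x_0,r_{j+1})}u \leq C\,\inf_{B(x_0,r_{j+2})}u$ for a possibly different $C$, or iterates the weak Harnack inequality across scales rather than applying it afresh each time, so that the constant does not accumulate. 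For the absorption case (\ref{eqa}) one uses instead the $\sup$-side weak Harnack (\ref{uh}) (valid because $-\mathcal{A}_p u\leq -f\leq 0$) to bound $\bigl(\oint u^\ell\bigr)^{1/\ell}$ from above by $\sup_{B(x_0,r_{j+2})}u$, and the monotonicity of $t\mapsto\sup_{B(x_0,t)}u$ plays the analogous telescoping role, yielding (\ref{fri}).
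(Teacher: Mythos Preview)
Your per-scale estimate is correct, but the summation step has a genuine gap that you yourself flag without resolving. Applying (\ref{wh}) directly to $u$ gives only
\[
\left(r_{j}^{p}\oint_{B(x_0,r_j)}f\right)^{1/(p-1)}\leq C\inf_{B(x_0,r_{j+1})}u\leq C\liminf_{x\to x_0}u,
\]
and summing over $j$ produces a divergent series on the left against a fixed right-hand side. Your proposed fix, an inequality of the form $\bigl(r_{j+1}^p\oint f\bigr)^{1/(p-1)}+m_{j+1}\leq C\,m_{j+2}$ with $m_k=\inf_{B(x_0,r_k)}u$, does not telescope when $C>1$: summing yields $\sum_j a_j\leq (C-1)\sum_j m_{j+2}+\text{boundary terms}$, which is again useless. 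The vague alternative of ``iterating weak Harnack across scales'' does not supply a mechanism either.

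The missing idea, and the one the paper uses, is to \emph{subtract the infimum before applying weak Harnack}. Set $w=u-m_{2\rho}$ on $B(x_0,2\rho)$; then $w\geq 0$ and $-\mathcal{B}_p w\geq f$ for the shifted operator $\mathcal{B}_p w=\operatorname{div}\mathrm{A}_p(x,w+m_{2\rho},\nabla w)$, which is still S-$p$-C. Now Corollary \ref{ball} and (\ref{wh}) applied to $w$ give
\[
\left(\rho^{p}\oint_{B(x_0,\rho)}f\right)^{1/(p-1)}\leq C\left(\oint_{B(x_0,2\rho)}w^{\ell}\right)^{1/\ell}\leq C\inf_{B(x_0,\rho)}w=C\,(m_{\rho}-m_{2\rho}).
\]
Repeating this at the dyadic radii $\rho_j=2^{1-j}\rho$ (so $2\rho_j=\rho_{j-1}$) yields the \emph{increment} bound $\bigl(\rho_j^{p}\oint_{B(x_0,\rho_j)}f\bigr)^{1/(p-1)}\leq C(m_{\rho_j}-m_{\rho_{j-1}})$, and now the sum telescopes exactly:
\[
C\,W_{1,p}^{f}(B(x_0,\rho))\leq\sum_{j\geq 1}(m_{\rho_j}-m_{\rho_{j-1}})=\liminf_{x\to x_0}u-\inf_{B(x_0,2\rho)}u,
\]
which is (\ref{fra}).

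Your absorption case has an additional problem: (\ref{uh}) reads $\sup u\leq C(\oint u^\ell)^{1/\ell}$, which is a \emph{lower} bound on the average, not the upper bound you need in (\ref{ino}). The paper instead sets $y=M_{2\rho}-u$ with $M_{2\rho}=\sup_{B(x_0,2\rho)}u$, observes that $y\geq 0$ satisfies $-\mathcal{C}_p y\geq f$ for another S-$p$-C operator, and applies the source-case argument above to $y$; since $\inf_{B(x_0,2\rho)} y=0$ and $\liminf_{x\to x_0} y=M_{2\rho}-\limsup_{x\to x_0} u$, inequality (\ref{fri}) follows.
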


\begin{proof}
(i) The function $w=u-$ $m_{2\rho},$ where $m_{\rho}=\inf_{B(x_{0},\rho)}u$,
is nonnegative in $B(x_{0},2\rho),$ and satisfies the inequality
$-\mathcal{B}_{p}w\geq f,$ where
\[
w\longmapsto\mathcal{B}_{p}w=\operatorname{div}\mathrm{A}_{p}(x,w+m_{2\rho
},\nabla w)
\]
is also a S-$p$-C operator. Then from Proposition \ref{Y} with $\xi$ as in
(\ref{phi}), fixing $\ell\in\left(  0,\frac{N(p-1)}{N-p}\right)  $ and
$\varepsilon=\frac{1}{2},$ and applying Harnack inequality (\ref{wh}), there
exists $C=C(N,p,K_{1,p},K_{2,p})$ such that
\[
2C\left(  \rho^{1-N}\int_{B(x_{0},\rho)}f\right)  ^{\frac{1}{p-1}}\leq
\rho^{-1}\left(  \oint_{B(x_{0},2\rho)}(u-m_{2\rho})^{\ell}\right)  ^{\frac
{1}{\ell}}\leq\rho^{-1}(m_{\rho}-m_{2\rho}).
\]
Setting $\rho_{j}=2^{1-j}\rho,$ as in \cite{KilMa1},
\[
CW_{1,p}^{f}(B(x_{0},\rho))\leq%
%TCIMACRO{\dsum \limits_{j=1}^{\infty}}%
%BeginExpansion
{\displaystyle\sum\limits_{j=1}^{\infty}}
%EndExpansion
(m_{\rho_{j}}-m_{\rho_{j-1}})=\lim m_{\rho_{j}}-\inf_{B(x_{0},2\rho
)}u=\underset{x\rightarrow x_{0}}{\text{ }\;\lim\inf\;}u-\inf_{B(x_{0},2\rho
)}u.
\]
(ii) The function $y=M_{2\rho}-u$ where $M_{2\rho}=\sup_{B(x_{0},2\rho)}u$
satisfies the inequality $-\mathcal{C}_{p}w\geq f$ in $B(x_{0},2\rho),$ where
\[
w\longmapsto\mathcal{C}_{p}w:=\operatorname{div}\left[  \mathrm{A}%
_{p}(x,M_{2\rho}-w,\nabla w)\right]
\]
is still S-$p$-C. Then
\[
W_{1,p}^{f}(B(x_{0},\rho)\leqq C(\sup_{B(x_{0},2\rho)}u-\underset{x\rightarrow
x_{0}}{\;\lim\sup\;}u),
\]
and (\ref{fri}) follows.\bigskip
\end{proof}

\begin{remark}
The minorizations by W\"{o}lf potentials (\ref{fra}) and (\ref{fri}) have been
proved in \cite{KilMa1} and \cite{KilZh} for S-$p$-C operators of type
$\mathcal{A}_{p}u:=div\left[  \mathrm{A}_{p}(x,\nabla u)\right]  $ independent
of $u,$ satisfying moreover monotonicity and homogeneity properties, in
particular $\mathcal{A}_{p}(-u)=-\mathcal{A}_{p}u$. The solutions are defined
in the sense of potential theory, and may not belong to $W_{loc}^{1,p}\left(
\Omega\right)  ,$ $f$ can be a Radon measure; majorizations by W\"{o}lf
potentials are also given, with weighted operators, see \cite{KilMa2} and
\cite{KilZh}. In the same way Proposition \ref{Y} can also be extended to
weighted operators, see \cite[Remark 2.4]{BP} and \cite{FaS}, or to the case
of a Radon measure when $\mathcal{A}_{p}$ is S-$p$-C by using the notion of
local renormalized solution introduced in \cite{B03}.\medskip
\end{remark}

\subsection{A bootstrap result}

Finally we give a variant of a result of \cite[Lemma 2.2]{BG}:\medskip\ 

\begin{lemma}
\label{boot}Let $d,h\in\mathbb{R}$ with $d\in\left(  0,1\right)  $ and
$y,\Phi$ be two positive functions on some interval $\left(  0,R\right]
,\;$and $y$ is nondecreasing. Assume that there exist some $K,M>0$ and
$\varepsilon_{0}\in\left(  0,\frac{1}{2}\right]  \;$such that, for any
$\varepsilon\in\left(  0,\varepsilon_{0}\right]  $,
\[
y(\rho)\leqq K\varepsilon^{-h}\Phi(\rho)y^{d}\left[  \rho(1+\varepsilon
)\right]  \qquad\text{and }\max_{\tau\in\left[  \rho,3\frac{\rho}{2}\right]
}\Phi(\tau)\leqq M\text{ }\Phi(\rho),\qquad\forall\rho\in\left(  0,\frac{R}%
{2}\right]  .
\]
Then there exists $C=C(K,M,d,h,\varepsilon_{0})>0$ such that
\begin{equation}
y(\rho)\leqq C\Phi(\rho)^{\frac{1}{1-d}},\qquad\forall\rho\in\left(
0,\frac{R}{2e}\right]  . \label{jjj}%
\end{equation}

\end{lemma}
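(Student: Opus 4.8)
The plan is to iterate the inequality $y(\rho)\leqq K\varepsilon^{-h}\Phi(\rho)y^{d}[\rho(1+\varepsilon)]$ along a geometric sequence of radii, choosing at each step an $\varepsilon$ small enough that the radii stay bounded by $R/2$, and track how the powers of $\Phi$, the constants $K$, and the penalty factors $\varepsilon^{-h}$ accumulate. Since $d\in(0,1)$, the exponents $d,d^2,d^3,\dots$ on the ``tail'' term $y[\rho_n]$ decay geometrically, so the contribution of $y(\rho_n)$ at the final step tends to a finite limit (here one uses that $y$ is nondecreasing and bounded on $[\rho,R/2]$ by $y(R/2)<\infty$, so $y(\rho_n)^{d^n}\to 1$). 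Meanwhile the accumulated exponent on $\Phi$ is $1+d+d^2+\cdots=\frac{1}{1-d}$, which is exactly the target exponent in (\ref{jjj}); the product over $n$ of the constants $K^{d^n}$ converges, and the product of $\varepsilon_n^{-hd^n}$ (with $\varepsilon_n$ chosen of the form $c\,\theta^n$) also converges because $\sum n d^n<\infty$. The doubling-type condition $\max_{[\rho,3\rho/2]}\Phi\leqq M\Phi(\rho)$ is what lets us replace $\Phi(\rho_n)$ by a controlled multiple of $\Phi(\rho)$ at each iteration, contributing another convergent product $M^{\,(\text{something})}$.

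\medskip\noindent
Concretely, first I would fix $\theta\in(0,1)$ (say $\theta=\tfrac12$) and set $\varepsilon_n=\varepsilon_0\theta^{\,n}$, and define $\rho_0=\rho$, $\rho_{n+1}=\rho_n(1+\varepsilon_n)$. One checks $\rho_\infty=\rho\prod_{n\geq0}(1+\varepsilon_0\theta^n)\leqq \rho\,e^{\varepsilon_0\sum\theta^n}=\rho\,e^{\varepsilon_0/(1-\theta)}$, and with $\theta=\tfrac12$, $\varepsilon_0\leqq\tfrac12$ this is $\leqq \rho\,e$; so if $\rho\leqq R/(2e)$ then all $\rho_n\leqq R/2$ and the hypotheses apply at each $\rho_n$. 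Also each ratio $\rho_{n+1}/\rho_n\leqq 3/2$, so the doubling condition gives $\Phi(\rho_{n+1})\leqq M\,\Phi(\rho_n)$, hence $\Phi(\rho_n)\leqq M^{n}\Phi(\rho)$. Then iterating the main inequality $n$ times starting from $\rho_0$ yields, after taking logarithms to organize the bookkeeping, an estimate of the shape
\[
y(\rho)\leqq \Big(\textstyle\prod_{j=0}^{n-1}K^{d^{j}}\varepsilon_j^{-h d^{j}}\Big)\,\Phi(\rho)^{\sum_{j=0}^{n-1}d^{j}}\,M^{\sum_{j=0}^{n-1} j\,d^{j}}\; y(\rho_n)^{d^{n}}.
\]

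\medskip\noindent
Finally I would let $n\to\infty$. The exponent $\sum_{j=0}^{n-1}d^j\to\frac{1}{1-d}$; the factor $M^{\sum j d^j}$ converges since $\sum_{j\geq0} j d^j=\frac{d}{(1-d)^2}<\infty$; the constant $\prod K^{d^j}=K^{1/(1-d)}$ and $\prod \varepsilon_j^{-hd^j}=\prod(\varepsilon_0\theta^j)^{-hd^j}=\varepsilon_0^{-h/(1-d)}\theta^{-h\sum jd^j}$ both converge; and $y(\rho_n)^{d^n}\to 1$ because $1\leqq y(\rho_n)\leqq y(R/2)$ is bounded (after, if necessary, first replacing $y$ by $\max(y,1)$, which still satisfies the hypotheses up to adjusting $K$) so its logarithm times $d^n\to0$. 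Collecting all these convergent products into a single constant $C=C(K,M,d,h,\varepsilon_0)$ (note $\theta$ was fixed absolutely, e.g. $\theta=\tfrac12$) gives $y(\rho)\leqq C\,\Phi(\rho)^{1/(1-d)}$ for all $\rho\in(0,R/(2e)]$, which is (\ref{jjj}). The main technical obstacle is purely bookkeeping: ensuring the triple product of constants, $\varepsilon$-penalties, and $M$-powers all converge simultaneously, which hinges on $d<1$ making both $\sum d^j$ and $\sum j d^j$ finite; the one genuine subtlety is handling $y(\rho_n)^{d^n}\to1$, which requires the a priori finiteness of $y$ near the endpoint and possibly the harmless normalization $y\geqq1$.
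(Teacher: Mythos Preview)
Your proposal is correct and follows essentially the same iteration as the paper's proof: set $\varepsilon_n=\varepsilon_0 2^{-n}$, check that the resulting radii stay below $e\rho\leq R/2$, use the doubling hypothesis to get $\Phi(\rho_n)\leq M^n\Phi(\rho)$, and collect the three convergent products into the explicit constant $C=(K\varepsilon_0^{-h})^{1/(1-d)}\,2^{h/(1-d)^2}\,M^{d/(1-d)^2}$. One small note: your parenthetical normalization $y\mapsto\max(y,1)$ is unnecessary (and does not obviously preserve the hypothesis when $\Phi$ is small), since $y(\rho_n)^{d^n}\to 1$ already follows from $0<y(\rho_n)\leq y(R/2)<\infty$ and $d^n\to 0$.
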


\begin{proof}
Let $\varepsilon_{m}=\varepsilon_{0}/2^{m}(m\in\mathbb{N)},$ and
$P_{m}=(1+\varepsilon_{1})..(1+\varepsilon_{m}).$ Then $(P_{m})$ has a finite
limit $P>0,$ and more precisely $P\leq e^{2\varepsilon_{0}}\leq e.$ For any
$\rho\in\left(  0,\frac{R}{2e}\right]  $ and any $m\geq1,$
\[
y(\rho P_{m-1})\leq K\varepsilon_{m}^{-h}\Phi(\rho P_{m-1})y^{d}(\rho P_{m}).
\]
By induction, for any $m\geq1,$
\[
y(\rho)\leq K^{1+d+..+d^{m-1}}\varepsilon_{1}^{-h}\varepsilon_{2}%
^{-hd}..\varepsilon_{m}^{-hd^{m-1}}\Phi(\rho)\Phi^{d}(\rho P_{1}%
)..\Phi^{d^{m-1}}(\rho P_{m-1})y^{d^{m}}(\rho P_{m}).
\]
Hence from the assumption on $\Phi$,
\[
y(\rho)\leq(K\varepsilon_{0}^{-h})^{1+d+..+d^{m-1}}2^{k(1+2d+..+md^{m-1}%
)}M^{d+2d^{2}+..+(m-1)d^{m-1}}\Phi(\rho)^{1+d+..+d^{m-1}}y^{d^{m}}(\rho
P_{m});
\]
and $y^{d^{m}}(\rho P_{m})\leq y^{d^{m}}(e\rho)\leq y^{d^{m}}(\frac{R}{2}),$
and $\lim y^{d^{m}}(\frac{R}{2})=1,$ because $d<1.$ Hence (\ref{jjj}) follows
with $C=(K\varepsilon_{0}^{-h})^{1/(1-d)}2^{h/(1-d)^{2}}M^{d/(1-d)^{2}%
}.\medskip$
\end{proof}

\section{ Keller-Osserman estimates \label{trois}}

\subsection{The scalar case}

First consider the solutions of inequality
\begin{equation}
-\mathcal{A}_{p}u+cu^{Q}\leq0,\qquad\text{in }\Omega, \label{ink}%
\end{equation}
with $Q>p-1$ and $c>0.$ From the integral estimates of Proposition \ref{Y} we
get easily Keller-Osserman estimates in the scalar case of the equation with
absorption, without any hypothesis of monotonicity on the operator:

\begin{proposition}
\label{scal}Let $Q>p-1$, $c>0.$ If $\mathcal{A}_{p}$ is S-$p$-C, and $u\in
W_{loc}^{1,p}\left(  \Omega\right)  \cap C\left(  \Omega\right)  $ is a
nonnegative solution of (\ref{ink}), there exists a constant $C=C(N,p,K_{1,p}%
,K_{2,p},Q)>0$ such that, for any $x\in\Omega,$
\begin{equation}
u(x)\leq Cc^{-1/(Q+1-p)}d(x,\partial\Omega)^{-p/(Q+1-p)}. \label{onk}%
\end{equation}

\end{proposition}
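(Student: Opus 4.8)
The plan is to derive the Keller--Osserman bound (\ref{onk}) directly from the integral estimate of Corollary \ref{ball}, combined with the weak Harnack inequality (\ref{uh}) and the bootstrap Lemma \ref{boot}. Fix a point $x_{0}\in\Omega$ and write $R=d(x_{0},\partial\Omega)$, so that every ball $B(x_{0},r)$ with $r<R$ lies in $\Omega$. Since $-\mathcal{A}_{p}u\leqq -cu^{Q}\leqq0$, $u$ is a nonnegative subsolution, and (\ref{uh}) (applied with exponent $\ell=p$, say, and with the refined radius-dependence of Lemma \ref{Har}) gives, for $B(x_{0},\rho(1+\varepsilon))\subset\Omega$,
\[
\sup_{B(x_{0},\rho)}u\leq C\varepsilon^{-\kappa}\left(\oint_{B(x_{0},\rho(1+\varepsilon))}u^{p}\right)^{1/p}
\]
for some exponent $\kappa=\kappa(N,p)$. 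The idea is to feed the $L^{p}$ average on the right back into the absorption term via Corollary \ref{ball}.

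The key step is the following. Apply Corollary \ref{ball} to equation (\ref{eqa}) with $f=cu^{Q}$ and with the exponent $\ell$ in that corollary chosen as $\ell=Q$ (legitimate since $Q>p-1$), on the ball $B(x_{0},2\rho)$ with cutoff $\xi$ as in (\ref{phi}). This yields
\[
c\oint_{\varphi}u^{Q}\leq C(\varepsilon\rho)^{-p}\left(\oint_{\varphi}u^{Q}\right)^{(p-1)/Q},
\]
hence, since $Q>p-1$, $\oint_{\varphi}u^{Q}\leq C'(c\,(\varepsilon\rho)^{p})^{-Q/(Q+1-p)}$, and in particular $\oint_{B(x_{0},\rho)}u^{Q}\leq C'(c(\varepsilon\rho)^{p})^{-Q/(Q+1-p)}$. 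Now I use H\"older (or rather the fact that $Q>p-1$, so $u^{p}$ is controlled by $u^{Q}$ on bounded domains only after also using a lower power) — more precisely, since we want an $L^{p}$-average and $p$ could be larger or smaller than $Q$, the clean route is to apply the weak Harnack inequality (\ref{uh}) with $\ell=Q$ directly, writing
\[
\sup_{B(x_{0},\rho)}u\leq C\left(\oint_{B(x_{0},2\rho)}u^{Q}\right)^{1/Q}\leq C\left(c(\rho)^{p}\right)^{-1/(Q+1-p)} = C c^{-1/(Q+1-p)}\rho^{-p/(Q+1-p)}.
\]
Taking $\rho=R/4$ (so that $B(x_{0},2\rho)\subset\Omega$ with room to spare for the cutoff in Corollary \ref{ball}) gives $u(x_{0})\leq\sup_{B(x_{0},R/4)}u\leq C c^{-1/(Q+1-p)}R^{-p/(Q+1-p)}$, which is exactly (\ref{onk}) up to renaming the constant.

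The main obstacle — and the reason Lemma \ref{boot} is mentioned as an essential tool — is that the exponent-matching above is a bit too glib: the weak Harnack inequality (\ref{uh}) is stated with concentric balls of ratio related to $3\rho$, and the integral estimate of Corollary \ref{ball} involves $\varphi=\xi^{\lambda}$ supported in $B(x_{0},\rho(1+\varepsilon))$, so one cannot in a single step transfer the bound on $\oint u^{Q}$ over one ball to a pointwise sup over the same ball. The honest argument sets $y(\rho)=\sup_{B(x_{0},\rho)}u$, which is nondecreasing in $\rho$, combines (\ref{hari}) (the $\varepsilon$-refined Harnack of Lemma \ref{Har}, with $s=Q$) with the Corollary \ref{ball} estimate applied on $B(x_{0},\rho(1+\varepsilon))$, and obtains an inequality of the form $y(\rho)\leq K\varepsilon^{-h}\Phi(\rho)\,y(\rho(1+\varepsilon))^{d}$ with $\Phi(\rho)=(c\rho^{p})^{-1/(Q+1-p)}$, $d=(p-1)/Q\in(0,1)$, and suitable $h$; one checks $\max_{\tau\in[\rho,3\rho/2]}\Phi(\tau)\leq M\Phi(\rho)$ since $\Phi$ is a negative power of $\rho$. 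Then Lemma \ref{boot} delivers $y(\rho)\leq C\Phi(\rho)^{1/(1-d)}=C(c\rho^{p})^{-\frac{1}{Q+1-p}}$ for all $\rho\leq R/(2e)$, whence (\ref{onk}) follows by choosing $\rho$ comparable to $R$ and using the definition $d(x,\partial\Omega)$. The scaling check $\frac{1}{1-d}=\frac{Q}{Q+1-p}$ and $\frac{p}{Q+1-p}$ for the exponent of $\rho$ confirms the stated power.
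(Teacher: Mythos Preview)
Your direct argument in the second paragraph is correct and is exactly the paper's proof: apply Corollary~\ref{ball} with $f=cu^{Q}$ and $\ell=Q$ (fixing $\varepsilon=\tfrac12$), observe that the resulting inequality
\[
c\oint_{\varphi}u^{Q}\leq C\rho^{-p}\Bigl(\oint_{\varphi}u^{Q}\Bigr)^{(p-1)/Q}
\]
\emph{self-closes} because the same quantity appears on both sides, yielding the absolute bound $\bigl(\oint_{B(x_{0},\rho)}u^{Q}\bigr)^{1/Q}\leq Cc^{-1/(Q+1-p)}\rho^{-p/(Q+1-p)}$; then one application of the weak Harnack inequality (\ref{uh}) with $\ell=Q$ gives $u(x_{0})\leq C\bigl(\oint_{B(x_{0},\rho)}u^{Q}\bigr)^{1/Q}$, and choosing $\rho$ comparable to $d(x_{0},\partial\Omega)$ finishes.

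Your last paragraph is unnecessary. The worry that the direct argument is ``too glib'' is misplaced: there is no mismatch of balls to repair, because the integral estimate already produces an \emph{absolute} bound on the $L^{Q}$ average (no residual dependence on $u$ on a larger ball remains once the inequality is self-closed). Hence no iteration is required, and Lemma~\ref{boot} plays no role here; the paper reserves the bootstrap for the system cases where the two integral estimates are coupled and one exponent may fall below $p-1$ or $q-1$. Your alternative bootstrap route would also work in principle, but it is a detour, and the recursion you sketch (with $\Phi(\rho)=(c\rho^{p})^{-1/(Q+1-p)}$ and $d=(p-1)/Q$) is not quite derived: to obtain $y(\rho)\leq K\varepsilon^{-h}\Phi(\rho)y(\rho(1+\varepsilon))^{d}$ you would have to forgo the self-closing and instead bound $\oint_{\varphi}u^{Q}$ on the right of (\ref{ino}) by $\sup u^{Q}$, which changes the form of $\Phi$. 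In short: keep your second paragraph, drop the third.
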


\begin{proof}
Let $B(x_{0},\rho_{0})\subset\Omega$, and $u\in W^{1,p}\left(  B(x_{0}%
,\rho_{0})\right)  .$ From Corollary \ref{ball} with $\rho\leq\frac{\rho_{0}%
}{2},$ $\varepsilon=\frac{1}{2},$ and $\ell=Q$ and a function $\varphi$
satisfying (\ref{phi}), we obtain for $\lambda=\lambda(p,Q)$
\begin{equation}
\oint_{\varphi}u^{Q}\leq c^{-1}C\rho^{-p}\left(  \oint_{\varphi}u^{Q}\right)
^{\frac{p-1}{Q}}, \label{mo}%
\end{equation}
where $C=C(N,p,K_{1,p},K_{2,p},Q).$ Then with another $C>0$ as above,
\[
\left(  \oint_{B(x_{0},\rho)}u^{Q}\right)  ^{\frac{1}{Q}}\leq Cc^{-\frac
{1}{Q+1-p}}\rho^{-\frac{p}{Q+1-p}}.
\]
Since $\mathcal{A}_{p}$ is S-$p$-C, from the weak Harnack inequality
(\ref{uh}), with another constant $C$ as above,
\[
u(x_{0})\leq C\left(  \oint_{B(x_{0},\rho)}u^{Q}\right)  ^{\frac{1}{Q}}\leq
c^{-\frac{1}{Q+1-p}}\rho^{-\frac{p}{Q+1-p}},
\]
and (\ref{onk}) follows by taking $\rho_{0}=d(x_{0},\partial\Omega).$\bigskip
\end{proof}

\subsection{The systems $(A)$ and $(M)$}

Here we prove theorems \ref{absorption}, \ref{mixed}, and Corollary
\ref{Liou}. We recall that $\gamma$ and $\xi$ are defined by (\ref{G}) under
the condition (\ref{D}) of superlinearity:
\[
\gamma=\frac{p(q-1)+q\delta}{D},\qquad\xi=\frac{q(p-1)+p\mu}{D},\qquad\text{
}D=\delta\mu-(p-1)(q-1)>0.
\]
\medskip

\begin{proof}
[Proof of Theorem \ref{absorption}]Consider a ball $B(x_{0},\rho_{0}%
)\subset\Omega,$ $\varepsilon\in\left(  0,\frac{1}{2}\right]  ,$ and a
function $\varphi$ satisfying (\ref{phi}) with $\lambda$ large enough.\medskip

(i) Case $\mu>p-1,\ \delta>q-1$. Here $C$ denotes different constants which
only depend on $N,p,q,\delta,\mu,$ and $K_{1,p},K_{2,p},K_{1,q},K_{2,q}.$ We
take $\varepsilon=\frac{1}{2}$ and apply Corollary \ref{ball} with $\rho
\leq\frac{\rho_{0}}{2}$ to the solution $u$ with $f=v^{\delta},$ and with
$\ell=\mu>p-1$. since $\mathcal{A}_{p}$ is W-$p$-C, from (\ref{ino}), we
obtain
\begin{equation}
\oint_{\varphi}v^{\delta}\leq C\rho^{-p}\left(  \oint_{\varphi}u^{\mu}\right)
^{\frac{p-1}{\mu}}, \label{m1}%
\end{equation}
and similarly we apply it to the solution $v$ with now $f=u^{\mu}$ and
$\ell=\delta>q-1:$ since $\mathcal{A}_{q}$ is W-$q$-C, we obtain
\begin{equation}
\oint_{\varphi}u^{\mu}\leq C\rho^{-q}\left(  \oint_{\varphi}v^{\delta}\right)
^{\frac{q-1}{\delta}}. \label{m2}%
\end{equation}
We can assume that $\oint_{\varphi}u^{\mu}>0.$ Indeed if $\oint_{\varphi
}u^{\mu}=0,$ then $u=0$ in $B(x_{0},\rho_{0}).$ Then $\nabla u=0,$ thus
$v^{\delta}=0$ and then the estimates are trivially verified. Replacing
(\ref{m2}) in (\ref{m1}) we deduce
\[
\oint_{\varphi}v^{\delta}\leq C\rho^{-p-q\frac{p-1}{\mu}}\left(
\oint_{\varphi}v^{\delta}\right)  ^{\frac{(q-1)(p-1)}{\mu\delta}},
\]
and similarly for $u,$ hence
\begin{equation}
\left(  \oint_{\varphi}v^{\delta}\right)  ^{\frac{1}{\delta}}\leq C\rho^{-\xi
},\qquad\left(  \oint_{\varphi}u^{\mu}\right)  ^{\frac{1}{\mu}}\leq
C\rho^{-\gamma}. \label{ingn}%
\end{equation}
Moreover, since $\mathcal{A}_{q}$ is S-$q$-C, then from the usual weak Harnack
inequality, since $v\in L_{loc}^{\infty}(\Omega),$ and $\varphi(x)=1$ in
$B(x_{0},\rho),$ with values in $\left[  0,1\right]  ,$
\[
\sup_{B(x_{0},\frac{\rho}{2})}v\leq C\left(  \oint_{B(x_{0},\rho)}v^{\delta
}\right)  ^{\frac{1}{\delta}}\leq\left(  \oint_{\varphi}v^{\delta}\right)
^{\frac{1}{\delta}}\leq C\rho^{-\xi}.
\]
Similarly%
\[
\sup_{B(x_{0},\frac{\rho}{2})}u\leq C\rho^{-\gamma},
\]
because $\mathcal{A}_{p}$ is S-$p$-C.\medskip

\noindent(ii) Case $\mu>p-1,$ and $\delta\leq q-1.$ Here we still apply
Corollary \ref{ball} with $\rho\leq\frac{\rho_{0}}{2}$, $\varepsilon\in\left(
0,1/4\right]  ,$ and a function $\varphi$ satisfying (\ref{phi}). Since
$\mu>p-1,$ we still obtain (\ref{m1}); and for any $k>q-1,$ and $\lambda$
large enough,
\begin{equation}
\oint_{\varphi}u^{\mu}\leq C(\varepsilon\rho)^{-q}\left(  \oint_{\varphi}%
v^{k}\right)  ^{(q-1)/k}, \label{mm}%
\end{equation}
and from Lemma \ref{Har},%
\[
\left(  \oint_{\varphi}v^{k}\right)  ^{1/k}\leq\sup_{B(x_{0},\rho
(1+\varepsilon))}v\leq C\varepsilon^{-\frac{Nq^{2}}{\delta^{2}}}\left(
\oint_{B(x_{0},\rho(1+2\varepsilon))}v^{\delta}\right)  ^{\frac{1}{\delta}}.
\]
Then with new constants $C,$ setting $m=q+\delta^{-2}Nq^{2}(q-1),$ and
$h=(p-1)\mu^{-1}m,$
\begin{equation}
\oint_{\varphi}u^{\mu}\leq C\varepsilon^{-m}\rho^{-q}\left(  \oint
_{B(x_{0},\rho(1+2\varepsilon))}v^{\delta}\right)  ^{\frac{(q-1)}{\delta}},
\label{jou}%
\end{equation}
hence from (\ref{m1}) and (\ref{jou}),%
\[
\oint_{B(x_{0},\rho)}v^{\delta}\leq C\oint_{\varphi}v^{\delta}\leq C\rho
^{-p}\left(  \oint_{\varphi}u^{\mu}\right)  ^{\frac{p-1}{\mu}}\leq
C\varepsilon^{-h}\rho^{-\frac{p\mu+q(p-1)}{\mu}}\left(  \oint_{B(x_{0}%
,\rho(1+2\varepsilon))}v^{\delta}\right)  ^{\frac{(p-1)(q-1)}{\delta\mu}},
\]
for any $\rho\leq\frac{\rho_{0}}{2}.$ Next we apply the boostrap Lemma
\ref{boot} with $R=\rho_{0},$ $y(\rho)=\oint_{B(x_{0},\rho)}v^{\delta}$,
$\Phi(r)=r^{-\frac{p\mu+q(p-1)}{\mu}}$ and $2\varepsilon$. We deduce that
\[
\left(  \oint_{B(x_{0},\rho)}v^{\delta}\right)  ^{1/\delta}\leq C\rho^{-\xi},
\]
for any $\rho<\frac{\rho_{0}}{2}e,$ and thus also%
\[
\sup_{B(x_{0},\frac{\rho}{2})}v\leq C\left(  \oint_{B(x_{0},\rho)}v^{\delta
}\right)  ^{\frac{1}{\delta}}\leq C\rho^{-\xi},\qquad\sup_{B(x_{0},\frac{\rho
}{2})}u\leq C\left(  \oint_{B(x_{0},\rho)}u^{\mu}\right)  ^{1/\mu}\leq
C\rho^{-\gamma}.
\]
In particular
\begin{equation}
u(x_{0})\leq C\rho_{0}^{-\gamma},\qquad v(x_{0})\leq C\rho_{0}^{-\xi},
\label{ghi}%
\end{equation}
for any ball $B(x_{0},\rho_{0})\subset\Omega,$ and the estimates (\ref{maja})
follow by taking $\rho_{0}=d(x_{0},\partial\Omega)$.\medskip\medskip
\end{proof}

\begin{proof}
[Proof of Theorem \ref{mixed}]We consider a ball $B(x_{0},\rho_{0})$ such that
$B(x_{0},2\rho_{0})\subset\Omega.$ From Proposition \ref{Y}, we have the same
estimates: for any $\ell>p-1,k>q-1,$ $\rho\leq\rho_{0},$%

\[
\oint_{\varphi}u^{\mu}\leq C\rho^{-q}\left(  \oint_{\varphi}v^{k}\right)
^{\frac{q-1}{k}},\qquad\oint_{\varphi}v^{\delta}\leq C\rho^{-p}\left(
\oint_{\varphi}u^{\ell}\right)  ^{\frac{p-1}{\ell}}.
\]
From Lemma \ref{Har} (even if $\mu<p-1),$ we have
\[
\sup_{B(x_{0},\frac{\rho}{2})}u^{\mu}\leq C\oint_{B(x_{0},\rho)}u^{\mu}.
\]
Taking $k<\frac{N(q-1)}{N-q},$ and using the weak Harnack inequality for $v,$
we obtain
\begin{align*}
\sup_{B(x_{0},\frac{\rho}{2})}u^{\mu}  &  \leq C\oint_{B(x_{0},\rho)}u^{\mu
}\leq C\oint_{\varphi}u^{\mu}\leq C\rho^{-q}\left(  \oint_{\varphi}%
v^{k}\right)  ^{\frac{q-1}{k}}\\
&  \leq C\rho^{-q}\left(  \oint_{B(x_{0},2\rho)}v^{k}\right)  ^{\frac{q-1}{k}%
}\leq C\rho^{-q}\inf_{B(x_{0},\rho)}v^{(q-1)};
\end{align*}
hence (\ref{punc}) holds in $B(x_{0},\frac{\rho}{2}).$ Moreover if
$v(x_{0})=0,$ then $u=0$ in $B(x_{0},\frac{\rho}{2}),$ then also $v=0$ in
$B(x_{0},\frac{\rho}{2}).$ Since $\Omega$ is connected, it implies that
$v\equiv0,$ and then $u\equiv0.$ If $v\not \equiv 0,$ then $v$ stays positive
in $\Omega,$ and we can write
\begin{equation}
-\mathcal{A}_{q}v=dv^{q-1},\qquad\text{in }\Omega, \label{qd}%
\end{equation}
with $d(x)=u^{\mu}/v^{(q-1)}\leq C\rho^{-q}$ in $B(x_{0},\frac{\rho}{2});$ in
particular
\begin{equation}
d(x_{0})=\frac{u^{\mu}(x_{0})}{v^{q-1}(x_{0})}\leq C\rho^{-q}, \label{vvi}%
\end{equation}
thus (\ref{punc}) holds and $v$ satisfies Harnack inequality in $\Omega:$
there exists a constant $C>0$ such that%
\[
\sup_{B(x_{0},\rho)}v\leq C\text{ }\inf_{B(x_{0},\rho)}v.
\]
Therefore%
\begin{align}
v^{\delta}(x_{0})  &  \leq\sup_{B(x_{0},\rho)}v^{\delta}\leq C\text{ }%
\inf_{B(x_{0},\rho)}v^{\delta}\leq C\oint_{\varphi}v^{\delta}\leq C\rho
^{-p}\left(  \oint_{\varphi}u^{\ell}\right)  ^{\frac{p-1}{\ell}}\nonumber\\
&  \leq C\rho^{-p}\sup_{B(x_{0},2\rho)}u^{p-1}\leq C\rho^{-p}\rho
^{-q\frac{p-1}{\mu}}\inf_{B(x_{0},4\rho)}v^{\frac{(q-1)(p-1)}{\mu}}\nonumber\\
&  \leq C\rho^{-(p+q\frac{p-1}{\mu})}v^{\frac{(q-1)(p-1)}{\mu}}(x_{0});
\label{fin}%
\end{align}
and (\ref{ghi}) follows again from (\ref{fin}) and (\ref{vvi}).\medskip
\end{proof}

\begin{remark}
Once we have proved (\ref{vvi}) we can obtain the estimate on $u$ in another
way: we have the relation in the ball
\[
\mathcal{A}_{p}u=v^{\delta}\geq cu^{\frac{\delta\mu}{q-1}\,}\qquad\text{in
}B(x_{0},\rho_{0}),
\]
with $c=C_{1}\rho_{0}^{\frac{q\delta}{q-1}};$ then from Osserman-Keller
estimates of Proposition \ref{scal} with $Q=$ $\frac{\delta\mu}{q-1}>p-1,$ we
deduce that
\[
u(x)\leq C_{2}c^{-1/Q}\rho_{0}^{-\frac{p}{Q+1-p}}=C_{3}\rho_{0}^{-\gamma
},\qquad\text{in }B(x_{0},\frac{\rho_{0}}{2}).
\]
\medskip
\end{remark}

The Liouville results are a direct consequence of the estimates:\medskip
\medskip

\begin{proof}
[Proof of Corollary \ref{Liou}]Let $x\in\mathbb{R}^{N}$ be arbitrary. Applying
the estimates in a ball $B(x,R)$, we deduce that $u(x)\leq CR^{-\gamma
},v(x)\leq CR^{-\xi}.$ Then we get $u(x)=v(x)=0$ by making $R$ tend to
$\infty.$\medskip
\end{proof}

\begin{remark}
In the scalar case of inequality (\ref{ink}) it was proved in \cite{FaS} that
the Liouville result is also valid for a W-$p$-C operator. In the case of
systems $(A)$ or $(M),$ the question is open. Indeed the method is based on
the multiplication of the inequality by $u^{\alpha}$ with $\alpha$ large
enough, and cannot be extended to the system.
\end{remark}

\section{Behaviour near an isolated point\label{quat}}

\subsection{The systems $(A_{w})$ and $(M_{w}).$}

Here we prove theorems \ref{weightabs} and \ref{weightmix}. We recall that
$\gamma_{a,b}$ and $\xi_{a,b}$ are defined by (\ref{Gab}) under condition
(\ref{D}) :
\[
\gamma_{a,b}=\frac{(p+a)(q-1)+(q+b)\delta}{D},\;\xi_{a,b}=\frac
{(q+b)(p-1)+(p+a)\mu}{D},\;D=\delta\mu-(p-1)(q-1)>0.
\]
\medskip

\begin{proof}
[Proof of Theorem \ref{weightabs}]It is a variant of Theorem \ref{absorption}:
we consider $\Omega=B_{1}^{\prime}$ and $x_{0}\in B_{\frac{1}{2}}^{\prime},$
and take $\rho_{0}=\frac{\left\vert x_{0}\right\vert }{4}$. Here we apply
Proposition \ref{Y} in the ball $B(x_{0},\rho)$ with $\rho\leq\frac{\rho_{0}%
}{2}$ and $\varepsilon\in\left(  0,\frac{1}{4}\right]  .$ The estimates
(\ref{m1}) and (\ref{mm}) are replaced by%

\begin{equation}
\oint_{\varphi}\left\vert x\right\vert ^{a}v^{\delta}\leq C(\varepsilon
\rho)^{-p}\left(  \oint_{\varphi}u^{\ell}\right)  ^{\frac{p-1}{\ell}}%
,\qquad\oint_{\varphi}\left\vert x\right\vert ^{b}u^{\mu}\leq C(\varepsilon
\rho)^{-q}\left(  \oint_{\varphi}v^{k}\right)  ^{\frac{q-1}{k}}, \label{bla}%
\end{equation}
for any $\ell>p-1,k>q-1;$ and $2\rho_{0}\leq\left\vert x\right\vert \leq
6\rho_{0}$ in $B(x_{0},2\rho_{0}),$ then in any of the cases $a\leq0$ or
$a>0,$ with a new constant $C,$%
\begin{equation}
\oint_{\varphi}v^{\delta}\leq C\varepsilon^{-p}\rho^{-(p+a)}\left(
\oint_{\varphi}u^{\ell}\right)  ^{\frac{p-1}{\ell}},\qquad\oint_{\varphi
}u^{\mu}\leq C\varepsilon^{-q}\rho^{-(q+b)}\left(  \oint_{\varphi}%
v^{k}\right)  ^{\frac{q-1}{k}}. \label{bbla}%
\end{equation}
Then all the proof is the same up to the change from $p,q$ into $p+a$ and
$q+b.$ We deduce the same estimates with $\gamma,\xi$ replaced by
$\gamma_{a,b},\xi_{a,b}:$
\begin{equation}
u(x_{0})\leq C\left\vert x_{0}\right\vert ^{-\gamma_{a,b}},\qquad v(x_{0})\leq
C\left\vert x_{0}\right\vert ^{-\xi_{a,b}}, \label{newest}%
\end{equation}
where $C$ depends on $N,p,q,a,b,\delta,\mu,$ and $K_{1,p},K_{2,p}%
,K_{1,q},K_{2,q}$.\bigskip
\end{proof}

\begin{proof}
[Proof of theorem \ref{weightmix}]In the same way we obtain estimate
(\ref{newest}), then we only need to prove the estimate with respect to
$\left\vert x\right\vert ^{-\frac{N-q}{q-1}}$. We can apply to the function
$v$ the results of \cite{B02}, recalled in \cite[Propositions 2.2 and 2.3]%
{BP}: $|x|^{b}u^{\mu\,}\in L^{1}\left(  B_{\frac{1}{2}}\right)  ,$ and for any
$k\in\left(  0,\frac{N(q-1)}{N-q}\right)  ,$ and $\rho>0$ small enough,
\begin{equation}
\left(  \oint_{B(0,\rho)}v^{k}\right)  ^{\frac{1}{k}}\leq C\rho^{-\frac
{N-q}{q-1}}. \label{lim}%
\end{equation}
Moreover, arguing as in the proof of (\ref{punc}), we obtain the punctual
inequality
\begin{equation}
u^{\mu}(x_{0})\leq C\left\vert x_{0}\right\vert ^{-(q+b)}v^{q-1}(x_{0}%
),\qquad\text{in }B_{\frac{1}{2}}^{\prime}, \label{pol}%
\end{equation}
which implies that
\[
d(x_{0})=\left\vert x_{0}\right\vert ^{b}\frac{u^{\mu}(x_{0})}{v^{q-1}(x_{0}%
)}\leq C\left\vert x_{0}\right\vert ^{-q}.
\]
Then $v$ satisfies the Harnack inequality in $B_{\frac{1}{2}}^{\prime},$
hence, from (\ref{lim}),
\[
v(x_{0})\leq\left(  \oint_{B(x_{0},\frac{\left\vert x_{0}\right\vert }{2}%
)}v^{k}\right)  ^{\frac{1}{k}}\leq C\left\vert x_{0}\right\vert ^{-\frac
{N-q}{q-1}},
\]
and (\ref{oth}) follows.
\end{proof}

\subsection{Removability results}

Here we suppose that%

\[
(C_{p})\left\{
\begin{array}
[c]{c}%
\mathcal{A}_{p}u:=div\left[  \mathrm{A}_{p}(x,\nabla u)\right]  ,\qquad
\qquad\mathcal{A}_{p}\text{ is S-}p\text{-C,}\\
\\
(\mathrm{A}_{p}(x,\xi)-\mathrm{A}_{p}(x,\zeta)).\left(  \xi-\zeta\right)
>0,\text{ }\qquad\text{for }\xi\neq\zeta,\\
\\
\mathrm{A}_{p}(x,\lambda\xi)=\left\vert \lambda\right\vert ^{p-2}%
\lambda\mathrm{A}_{p}(x,\xi),\text{ }\qquad\text{for }\lambda\neq0,
\end{array}
\right.
\]
and similarly for $\mathcal{A}_{q}.$ We give sufficient conditions ensuring
that at least one of the functions $u,v$ or both are bounded. We obtain the
two following results, relative to systems $(A_{w})$ and $(M_{w})$:

\begin{theorem}
\label{remabs} Assume (\ref{D}), $(C_{p}),(C_{q})$. Let $u\in W_{loc}%
^{1,p}\left(  B_{1}^{\prime}\right)  ,$ $v\in W_{loc}^{1,q}\left(
B_{1}^{\prime}\right)  $ be nonnegative solutions of
\[
\left\{
\begin{array}
[c]{c}%
-\mathcal{A}_{p}u+|x|^{a}\,v^{\delta}\leq0,\\
-\mathcal{A}_{q}v+|x|^{b}u^{\mu\,}\leq0,
\end{array}
\right.  \qquad\text{in }B_{1}^{\prime}.
\]
\medskip

\noindent(i) If $\gamma_{a,b}\leq\frac{N-p}{p-1},$ then $u$ is bounded near
$0;$ if $\xi_{a,b}\leq\frac{N-q}{q-1},$ then $v$ is bounded.\medskip

\noindent(ii) If moreover $\left(  u,v\right)  $ is a solution of $(A_{w})$
and $u$ is bounded near $0$ and $\delta>\frac{(p+a)(q-1)}{N-q}$ (or
$\delta=\frac{(p+a)(q-1)}{N-q}$ if $\mathcal{A}_{p}=\Delta_{p}$) then $v$ is
also bounded. In the same way if $v$ is bounded and $\mu>\frac{(q+b)(p-1)}%
{N-p}$ (or $\mu=\frac{(q+b)(p-1)}{N-p}$ if $\mathcal{A}_{q}=\Delta_{q}$) then
$u$ is also bounded.
\end{theorem}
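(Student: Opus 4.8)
The plan is to feed the a priori estimates of Theorem \ref{weightabs} into the classical removable-singularity theory for quasilinear operators satisfying $(C_p)$, $(C_q)$, the coupling between the two inequalities being what makes the critical exponents $\frac{N-p}{p-1}$ and $\frac{N-q}{q-1}$ reachable. Note first that, since $|x|^a v^\delta\ge 0$ and $|x|^b u^\mu\ge 0$, the hypotheses say that $u$ is a nonnegative $\mathcal{A}_p$-subsolution and $v$ a nonnegative $\mathcal{A}_q$-subsolution in $B_1'$, i.e. $-\mathcal{A}_p u\le 0$ and $-\mathcal{A}_q v\le 0$ there; and by Theorem \ref{weightabs} we have $u(x)\le C|x|^{-\gamma_{a,b}}$, $v(x)\le C|x|^{-\xi_{a,b}}$ in $B_{1/2}'$.

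For part (i) suppose $\gamma_{a,b}\le\frac{N-p}{p-1}$ (the case $\xi_{a,b}\le\frac{N-q}{q-1}$ for $v$ being identical after replacing $(p,a,\gamma)$ by $(q,b,\xi)$). If the inequality is strict, the a priori bound gives $u(x)=o(|x|^{-(N-p)/(p-1)})$. Since $\{0\}$ has zero $p$-capacity and, under $(C_p)$, there is a positive $\mathcal{A}_p$-harmonic function $\Phi$ on $B_{1/2}'$ comparable to $|x|^{-(N-p)/(p-1)}$, homogeneity of $\mathcal{A}_p$ makes every $\varepsilon\Phi+M_0$ still $\mathcal{A}_p$-harmonic; comparing $u$ with these barriers on the annuli $B_{1/2}\backslash B_\rho$ (where $M_0$ bounds $u$ near $\partial B_{1/2}$ and $u\le\varepsilon\Phi$ on $\partial B_\rho$ for $\rho$ small, by the $o$-growth) and letting $\rho\to0$, then $\varepsilon\to0$, we get $u\le M_0$ on $B_{1/2}'$, so $u$ is bounded and its singularity removable. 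The borderline case $\gamma_{a,b}=\frac{N-p}{p-1}$ is where the coupling is essential (the fundamental solution itself is an admissible $\mathcal{A}_p$-subsolution at this rate): if $u$ were unbounded, the a priori estimate forces $u$ to be comparable to the fundamental solution near $0$, so $|\mathrm{A}_p(x,\nabla u)|\le C|x|^{-(N-1)}$ and integration of $\mathcal{A}_p u\ge|x|^a v^\delta$ over dyadic annuli show that $\mathcal{A}_p u$ carries no concentrated mass at $0$; feeding this into the second inequality and arguing as in the linear analysis of \cite{BG}, \cite{BG2} but with the weak Harnack inequalities and the bootstrap Lemma \ref{boot} in place of the mean-value ODEs, one either improves the exponent of $v$ enough that $v$ is bounded by the subcritical case—which reinjected improves the exponent of $u$ strictly below $\frac{N-p}{p-1}$—or reaches directly an incompatible amount of mass; either way the assumed unboundedness is contradicted.

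For part (ii) assume $u\le M$ near $0$ and that $(u,v)$ solves $(A_w)$, so $\mathcal{A}_p u=|x|^a v^\delta$ in $B_1'$. Interior gradient estimates for $\mathcal{A}_p$-solutions give $|\mathrm{A}_p(x,\nabla u)|\le C(M/|x|)^{p-1}$; integrating $\mathcal{A}_p u=|x|^a v^\delta$ over $B_r\backslash B_\rho$ and letting $\rho\to0$ (the flux through $\partial B_\rho$ is $O(M^{p-1}\rho^{N-p})\to0$ because $N>p$) yields $\int_{B_r}|x|^a v^\delta\le CM^{p-1}r^{N-p}$. Averaging over the annulus $B_{2r}\backslash B_r$, where $|x|^a\asymp r^a$, gives $(\oint_{B_{2r}\backslash B_r}v^\delta)^{1/\delta}\le C|x|^{-(p+a)/\delta}$, and the weak Harnack inequality (\ref{uh}) for the subsolution $v$ (with $\ell=\delta$ when $\delta>q-1$, otherwise after one application of the bootstrap Lemma \ref{boot}) upgrades this to $v(x)\le C|x|^{-(p+a)/\delta}$ near $0$. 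Since $\delta>\frac{(p+a)(q-1)}{N-q}$ is equivalent to $\frac{p+a}{\delta}<\frac{N-q}{q-1}$, this means $v(x)=o(|x|^{-(N-q)/(q-1)})$ and the subcritical removability of part (i) makes $v$ bounded. When $\mathcal{A}_p=\Delta_p$, the absence of concentrated $p$-capacitary mass for a bounded $p$-subharmonic function sharpens the bound to $\int_{B_r}|x|^a v^\delta=o(r^{N-p})$, which accommodates the equality $\delta=\frac{(p+a)(q-1)}{N-q}$. The implication "$v$ bounded and $\mu>\frac{(q+b)(p-1)}{N-p}$ (resp. $=$, if $\mathcal{A}_q=\Delta_q$) $\Rightarrow$ $u$ bounded" is proved verbatim after exchanging $(p,a,\delta)$ with $(q,b,\mu)$.

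The step I expect to be genuinely delicate is the critical/equality thresholds: establishing that a bounded — and, in part (i), a critically growing — $\mathcal{A}_p$-subsolution carries no concentrated mass at $0$, quantifying the gain $o(r^{N-p})$ in the $\Delta_p$ case, and organizing the coupled bootstrap in part (i) so that $\gamma_{a,b}=\frac{N-p}{p-1}$ is actually attained. Away from these thresholds the statement reduces cleanly to the a priori estimates plus standard removability theory for $(C_p)$, $(C_q)$ operators.
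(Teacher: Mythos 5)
Your plan is correct in outline and the sub‑critical part of (i) matches the paper (which packages the barrier comparison with the $\mathcal{A}_p$-fundamental solution into Lemma \ref{supu}, built from the renormalized Dirichlet problem $-\mathcal{A}_p E=\delta_0$ of \cite{DMOP}). But the two ``delicate'' spots you flag are in fact the places where you have gaps, and the paper's route through them is different from what you sketch.

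\emph{Critical case of (i).} Your second paragraph gestures at a dichotomy but never produces a contradiction. The paper's argument is concrete and worth knowing: the a priori estimate $u\le C|x|^{-(N-p)/(p-1)}$ plus Lemma \ref{fon} gives $f=|x|^{a}v^{\delta}\in L^{1}$ near $0$; and the alternative of Lemma \ref{supu} says that if $u$ is unbounded then the \emph{lower} bound $\sup_{|x|=r_{0}}u\ge Cr_{0}^{(p-N)/(p-1)}$ holds. Then (\ref{bbla}) and Lemma \ref{Har} convert $u^{\mu}(x_{0})$ into an estimate by $\bigl(\int_{r_{0}/2\le|x|\le 3r_{0}/2}f\bigr)^{(q-1)/\delta}$, and the exponents match up exactly at $\gamma_{a,b}=\frac{N-p}{p-1}$ so that $\int_{r_{0}/2\le|x|\le 3r_{0}/2}f\ge C>0$ on every dyadic annulus, which is incompatible with $f\in L^{1}$. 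You have all the needed tools (Proposition \ref{Y}, Lemmas \ref{Har}, \ref{supu}, \ref{fon}) available but do not assemble them; the missing idea is precisely Lemma \ref{fon} combined with Lemma \ref{supu}'s lower bound.

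\emph{Part (ii).} Your flux argument would work in spirit, but the pointwise bound $|\mathrm{A}_p(x,\nabla u)|\le C(M/|x|)^{p-1}$ relies on interior $C^{1}$ gradient estimates that are not available under $(C_p)$ alone (only coercivity, monotonicity, homogeneity are assumed — no Hölder continuity of $\mathrm{A}_p$ in $x$); you would have to replace it by the Caccioppoli/integral bound already contained in Proposition \ref{Y} (see also (\ref{lam})). The paper sidesteps all this by noting $g=|x|^{b}u^{\mu}\le C|x|^{b}\in L^{N/q+\varepsilon}$ and invoking Serrin's dichotomy \cite{S1},\cite{S2} for $v$: either $v$ is bounded, or $v\asymp|x|^{-(N-q)/(q-1)}$. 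In the strict case $\delta>\frac{(p+a)(q-1)}{N-q}$ this forces $\mathcal{A}_p u\ge C_1|x|^{-p-\varepsilon}$, which the integral estimate (\ref{bla}) contradicts since $u$ is bounded. For the equality case, your claim that removability of the atom gives $\int_{B_r}|x|^{a}v^{\delta}=o(r^{N-p})$ does not follow: extending $u$ to a bounded subsolution on $B_1$ gives $\Delta_p u=\mu$ a nonnegative Radon measure, and $\mu(\{0\})=0$ only yields $\mu(B_r)=o(1)$, not $o(r^{N-p})$. The paper instead forms $w=M-u\ge0$ bounded with $-\Delta_p w\ge C_1|x|^{-p}$ and appeals to the nonexistence result \cite[Proposition 2.7]{BP}, which is the sharp ingredient you would need to make the equality threshold work.
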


\begin{theorem}
\label{remmix}Assume (\ref{D}), $(C_{p}),(C_{q})$. Let $u\in W_{loc}%
^{1,p}\left(  B_{1}^{\prime}\right)  \cap C\left(  B_{1}^{\prime}\right)  ,$
$v\in W_{loc}^{1,q}\left(  B_{1}^{\prime}\right)  \cap C\left(  B_{1}^{\prime
}\right)  $ be nonnegative solutions of
\[
\left\{
\begin{array}
[c]{c}%
-\mathcal{A}_{p}u+|x|^{a}\,v^{\delta}\leq0,\\
-\mathcal{A}_{q}v\geq|x|^{b}u^{\mu\,},
\end{array}
\right.  \qquad\text{in }B_{1}^{\prime}.
\]

\noindent If $\gamma_{a,b}\leq\frac{N-p}{p-1},$ or if $\gamma_{a,b}>\frac
{N-p}{p-1}$ and $\mu>\frac{(N+b)(p-1)}{N-p},$ then $u$ is bounded.
\end{theorem}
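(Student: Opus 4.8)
\textbf{Proof proposal for Theorem \ref{remmix}.}

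The plan is to treat the two ranges of $\gamma_{a,b}$ separately, reducing each to the removability theory for a single quasilinear inequality of the type covered by $(C_{p})$. First suppose $\gamma_{a,b}\leq\frac{N-p}{p-1}$. From Theorem \ref{weightmix} we already have the punctual bound $u(x)\leq C|x|^{-\gamma_{a,b}}$ in $B_{1/2}^{\prime}$, hence in particular $|x|^{a}v^{\delta}$ needs no control: what we want is to show $u$ extends across $0$. The key point is that $u$ is a nonnegative $\mathcal{A}_{p}$-superharmonic function (since $-\mathcal{A}_{p}u\geq -|x|^{a}v^{\delta}$ is not directly the right sign, I would instead use $-\mathcal{A}_{p}u\geqq 0$ is false — rather use that $-\mathcal{A}_{p}u = -|x|^{a}v^{\delta}\leqq 0$, i.e. $u$ is $\mathcal{A}_{p}$-subharmonic). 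So $u$ is a nonnegative $\mathcal{A}_{p}$-subsolution bounded by $C|x|^{-\gamma_{a,b}}$ with $\gamma_{a,b}(p-1)\leq N-p$, i.e. $\gamma_{a,b}\leq (N-p)/(p-1)$, which is exactly the threshold below which an isolated singularity of a bounded-by-the-fundamental-solution subsolution is removable (the capacity of a point is zero at this rate). Under $(C_{p})$, with the homogeneity and strict monotonicity, one invokes the standard removable-singularity result for $\mathcal{A}_{p}$: a function in $W^{1,p}_{loc}(B_{1}^{\prime})$ satisfying $-\mathcal{A}_{p}u\leqq 0$ in $B_{1}^{\prime}$ and $u(x)=o(|x|^{-(N-p)/(p-1)})$ (or $O$ of it, using that the borderline case still works for subsolutions) is in $W^{1,p}_{loc}(B_{1})$ and satisfies the inequality in all of $B_{1}$; in particular it is locally bounded by the weak Harnack inequality (\ref{uh}). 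That settles case one.

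For the second range, $\gamma_{a,b}>\frac{N-p}{p-1}$ together with $\mu>\frac{(N+b)(p-1)}{N-p}$, the bound on $u$ from Theorem \ref{weightmix} is too weak to apply removability directly, so I would bootstrap using the second equation. Since $-\mathcal{A}_{q}v\geq|x|^{b}u^{\mu}\geq0$, $v$ is a nonnegative $\mathcal{A}_{q}$-supersolution, hence by the weak Harnack inequality (\ref{wh}) it is bounded below away from $0$ by a positive constant on each sphere, but more importantly $v$ is $\mathcal{A}_{q}$-superharmonic, so by the classical theory under $(C_{q})$ either $v$ is bounded near $0$ or $v(x)\geq c|x|^{-(N-q)/(q-1)}$ near $0$; in fact the supersolution structure plus $|x|^{b}u^{\mu}\in L^{1}_{loc}$ (which follows from Proposition \ref{Y} applied to $v$, exactly as in the proof of Theorem \ref{weightmix}) gives $v(x)\leq C|x|^{-(N-q)/(q-1)}$ as in (\ref{lim})--(\ref{oth}). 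Feeding the upper bound $v\leq C|x|^{-(N-q)/(q-1)}$ into the first inequality $-\mathcal{A}_{p}u\leqq -|x|^{a}v^{\delta}\leqq 0$, we again get $u$ subharmonic; but now we need an \emph{improved} upper bound on $u$. The mechanism is: from (\ref{lim}) and the Harnack inequality for $v$ we have $v$ bounded on $\partial B_{|x_{0}|/2}(x_{0})$-type spheres by $|x_{0}|^{-(N-q)/(q-1)}$, and then the punctual inequality (\ref{pol}), $u^{\mu}(x_{0})\leq C|x_{0}|^{-(q+b)}v^{q-1}(x_{0})$, yields $u(x)\leq C|x|^{-(q+b+(q-1)(N-q)/(q-1))/\mu}=C|x|^{-(N+b)/\mu}$. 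The hypothesis $\mu>\frac{(N+b)(p-1)}{N-p}$ is precisely $\frac{N+b}{\mu}<\frac{N-p}{p-1}$, so this new exponent for $u$ lies strictly below the removability threshold, and the case-one argument now applies to $u$: it is removable, hence bounded.

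The main obstacle I anticipate is making the ``removable singularity'' step rigorous for the \emph{inequality} rather than the equation, and at the \emph{borderline} exponent. For the strict inequality $\gamma_{a,b}<(N-p)/(p-1)$ one has room to spare and a capacitary/test-function argument (multiply the inequality by a standard logarithmic cutoff $\zeta_{n}$ vanishing near $0$ and pass to the limit, using that the $W^{1,p}$-capacity of a point is zero and that the growth bound kills the boundary term) goes through cleanly; one then upgrades to local boundedness via (\ref{uh}). At the equality case one must be slightly more careful — this is why the theorem only claims the borderline when $\mathcal{A}_{p}=\Delta_{p}$ in the analogous Theorem \ref{remabs}, though here the statement of Theorem \ref{remmix} keeps $\gamma_{a,b}\leq\frac{N-p}{p-1}$ including equality; I would handle the equality case by the same cutoff argument noting that for a \emph{subsolution} bounded by a constant times the fundamental solution the singular set still has zero $p$-capacity contribution, so no extra hypothesis on $\mathcal{A}_{p}$ is needed in this direction (this asymmetry with Theorem \ref{remabs}(ii) is because there one needs removability going ``up'' to an equation, here only ``down'' via a subsolution). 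A secondary technical point is verifying $|x|^{b}u^{\mu}\in L^{1}_{loc}(B_{1})$ and $|x|^{a}v^{\delta}\in L^{1}_{loc}(B_{1})$ so that the extended functions genuinely solve the inequalities in $B_{1}$; both follow from Proposition \ref{Y} with the cutoff (\ref{phi}) once the punctual bounds (\ref{oth}) are in hand, exactly as in the proof of Theorem \ref{weightmix}.
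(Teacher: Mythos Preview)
Your treatment of the second range ($\gamma_{a,b}>\frac{N-p}{p-1}$ with $\mu>\frac{(N+b)(p-1)}{N-p}$) is essentially the paper's: use the bound $v\leq C|x|^{-(N-q)/(q-1)}$ from (\ref{oth}), feed it into the punctual inequality (\ref{pol}) to get $u^{\mu}\leq C|x|^{-(N+b)}$, and observe the resulting exponent for $u$ is strictly below $(N-p)/(p-1)$, so Lemma~\ref{supu} applies. Likewise, for the strict inequality $\gamma_{a,b}<\frac{N-p}{p-1}$ your appeal to the subsolution property plus the growth bound is exactly Lemma~\ref{supu}.

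The gap is at the borderline $\gamma_{a,b}=\frac{N-p}{p-1}$. Your claim that ``for a subsolution bounded by a constant times the fundamental solution the singular set still has zero $p$-capacity contribution, so no extra hypothesis is needed'' is false as stated: the fundamental solution $E$ itself satisfies $-\mathcal{A}_{p}E=0$ in $B_{1}^{\prime}$, hence $-\mathcal{A}_{p}E\leq 0$, and $E\leq C|x|^{-(N-p)/(p-1)}$, yet $E$ is unbounded. So the subsolution property of $u$ and the growth bound $u\leq C|x|^{-(N-p)/(p-1)}$ alone cannot force boundedness; the system structure is essential here. The paper (which simply says ``the proof of part (i) of Theorem~\ref{remabs} is still valid'') argues by contradiction: if $u$ is unbounded, Lemma~\ref{supu} gives $\sup_{|x|=\rho}u\geq C\rho^{-(N-p)/(p-1)}$ for all small $\rho$. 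On the other hand, since $|x|^{(N-p)/(p-1)}u$ is bounded, Lemma~\ref{fon} yields $f=|x|^{a}v^{\delta}\in L^{1}(B_{1/2})$. Now one uses the \emph{second} inequality: the estimate (\ref{bbla}) combined with Lemma~\ref{Har} gives $\sup_{|x|=r_{0}}u^{\mu}\leq Cr_{0}^{-(q+b)-(N+a)(q-1)/\delta}\bigl(\int_{r_{0}/2\leq|x|\leq 3r_{0}/2}f\bigr)^{(q-1)/\delta}$; comparing with the lower bound and using $\gamma_{a,b}=(N-p)/(p-1)$ forces $\int_{r_{0}/2\leq|x|\leq 3r_{0}/2}f\geq C>0$ uniformly, and summing over dyadic annuli contradicts $f\in L^{1}$. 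Your capacitary cutoff idea does not capture this coupling between the two inequalities, and without it the borderline case fails.
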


The proofs require some lemmas, adapted to subsolutions of equation
$\mathcal{A}_{p}u=0.$

\begin{lemma}
\label{supu}Assume $(C_{p}).$ Let $u\in W_{loc}^{1,p}\left(  B_{1}^{\prime
}\right)  \cap C(B_{1}^{\prime})$ be nonnegative, such that
\[
-\mathcal{A}_{p}u\leqq0\text{ \qquad in }B_{1}^{\prime}.
\]
Then, either there exists $C>0$ and $r\in\left(  0,\frac{1}{2}\right)  $ such
that
\begin{equation}
\sup_{\left\vert x\right\vert =\rho}u\geq C\rho^{\frac{p-N}{p-1}},\quad
\quad\text{for any }\rho\in\left(  0,r\right)  , \label{fil}%
\end{equation}
or $u$ is bounded near $0.$
\end{lemma}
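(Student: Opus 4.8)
The plan is to argue by dichotomy on the behaviour of the spherical supremum $M(\rho):=\sup_{|x|=\rho}u$. Set $\mu_0(\rho):=\sup_{|x|=\rho}u$ and note that, since $-\mathcal{A}_p u\le 0$, $u$ is a nonnegative subsolution of $\mathcal{A}_p u=0$ in $B_1'$, so by the weak Harnack inequality \eqref{uh} (applied in balls $B(x,\tfrac{|x|}{4})$ with $x$ on the sphere $|x|=\rho$, and a chaining argument around the sphere) one gets the standard \emph{Harnack-type chain estimate} on spheres: there is $C_1>0$ such that $\sup_{|x|=\rho}u \le C_1 \inf_{|x|=\rho}u$ for all small $\rho$. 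This is where the hypothesis $(C_p)$ (in particular S-$p$-C) is used. Consequently $u$ on each sphere is comparable to its supremum, and $M(\rho)$ is, up to a constant, the only relevant quantity.

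Next I would compare $u$ with the explicit $\mathcal{A}_p$-harmonic-type radial barrier. Under $(C_p)$, by the homogeneity property $\mathrm{A}_p(x,\lambda\xi)=|\lambda|^{p-2}\lambda\,\mathrm{A}_p(x,\xi)$ and strong $p$-coercivity, the radial function $w(x)=|x|^{-\frac{N-p}{p-1}}$ is (up to constants, and up to harmless comparison inequalities coming from the coercivity constants $K_{1,p},K_{2,p}$) a supersolution: $-\mathcal{A}_p w\ge 0$ in any annulus. Now fix a small $r$ and, for $\rho<r$, compare $u$ with the multiple $t\,w$ of $w$ on the annulus $\{\rho\le|x|\le r\}$, where $t$ is chosen so that $t\,w = M(r)$ on $|x|=r$. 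Since $-\mathcal{A}_p(t w)\ge 0\ge -\mathcal{A}_p u$ and $u\le t w$ on the outer sphere $|x|=r$, the comparison principle (valid because of the strict monotonicity in $(C_p)$) forces, on the inner sphere, $M(\rho)=\sup_{|x|=\rho}u \ge \inf_{|x|=\rho}(tw) - \big(\text{error from }|x|=r\big)$... more precisely one subtracts the constant $M(r)$ and works with $u-M(r)$, which is still a subsolution, comparing it to $c\,(|x|^{-\frac{N-p}{p-1}}-r^{-\frac{N-p}{p-1}})$.

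The dichotomy then reads: either for \emph{every} small $r$ the constant $c=c(r)$ one can take stays bounded below by a fixed positive number independent of $r$ — in which case letting the outer radius $r\to 0$ in the comparison yields exactly \eqref{fil} with a uniform $C$ — or else $c(r)\to 0$, which (together with the spherical Harnack chain) forces $M(\rho)-M(r)$ to stay bounded as $\rho\to 0$ along a sequence of $r$'s, hence $M$ is bounded near $0$, i.e. $u$ is bounded near $0$; a standard removable-singularity / maximum-principle argument on the punctured ball then upgrades "$u$ bounded on a sequence of spheres" to "$u$ bounded near $0$". The main obstacle I anticipate is the quantitative comparison step: making the barrier argument fully rigorous requires the comparison principle for $\mathcal{A}_p$ in an annulus (which needs the strict monotonicity in $(C_p)$ and some care with the meaning of sub/supersolution in $W^{1,p}_{loc}$), and tracking that the constant in \eqref{fil} does not degenerate — it is precisely the $r\to0$ limit in this comparison that must be controlled. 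Everything else (the spherical Harnack chain, the homogeneity computation showing $|x|^{-(N-p)/(p-1)}$ is a supersolution) is routine given the tools assembled in Section \ref{TOOLS}.
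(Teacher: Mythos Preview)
Your proposal has two genuine gaps and one unnecessary detour.

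\textbf{The spherical Harnack step is false.} The weak Harnack inequality \eqref{uh} bounds $\sup u$ by an average; it does \emph{not} give a lower bound on $\inf u$. A Harnack chain yielding $\sup_{|x|=\rho}u\le C\inf_{|x|=\rho}u$ would require $u$ to be a nonnegative \emph{supersolution} as well, which you do not have: here $u$ is only a subsolution. Fortunately this step is not needed at all---the paper's argument never uses it.

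\textbf{The radial barrier is not available.} The homogeneity in $(C_p)$ is homogeneity in the gradient variable $\xi$, not in $x$. Since $\mathrm{A}_p(x,\xi)$ may depend on $x$ in a non-radial way, there is no reason for $|x|^{-(N-p)/(p-1)}$ to be $\mathcal{A}_p$-superharmonic. The paper replaces this by the genuine fundamental solution: under $(C_p)$ there exists a renormalized solution $E$ of $-\mathcal{A}_pE=\delta_0$ in $B_1$ (Dal Maso--Murat--Orsina--Prignet), which is $\mathcal{A}_p$-harmonic in $B_1'$ and, by Serrin's estimates, satisfies $C_1|x|^{-(N-p)/(p-1)}\le E(x)\le C_2|x|^{-(N-p)/(p-1)}$ near $0$. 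Homogeneity then guarantees that any positive multiple of $E$ (plus any constant, since $\mathcal{A}_p$ is independent of $u$) is still $\mathcal{A}_p$-harmonic; this is the correct comparison function.

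\textbf{The comparison goes the other way.} Your dichotomy tries to extract a \emph{lower} bound on $M(\rho)$ from comparison, which is not what the comparison principle provides for a subsolution. The clean route (and the paper's) is contrapositive: if \eqref{fil} fails, there is a sequence $r_n\downarrow 0$ with $\sup_{|x|=r_n}u\le n^{-1}r_n^{(p-N)/(p-1)}\le (nC_1)^{-1}E$ on $|x|=r_n$. On the annulus $r_n\le|x|\le\tfrac12$, the function $(nC_1)^{-1}E+\max_{|x|=1/2}u$ is $\mathcal{A}_p$-harmonic and dominates $u$ on both boundary spheres, so by the comparison principle (strict monotonicity in $(C_p)$) it dominates $u$ inside. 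Letting $n\to\infty$ gives $u\le \max_{|x|=1/2}u$ in $B_{1/2}'$, i.e.\ $u$ is bounded near $0$.
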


\begin{proof}
From our assumptions on $\mathcal{A}_{p}$, there exists at least a solution
$E$ of the Dirichlet problem
\[
-\mathcal{A}_{p}E=\delta_{0},\qquad\text{in }B_{1},
\]
where $\delta_{0}$ is the Dirac mass at $0,$ in the renormalized sense, see
\cite[Theorem 3.1]{DMOP}. In particular it satisfies the equation in
$\mathcal{D}^{\prime}(B_{1}),$ and it is a smooth solution of equation
$\mathcal{A}_{p}E=0$ in $B_{1}^{\prime}.$ From \cite{S1}, \cite{S2}, there
exists $C_{1},C_{2}>0$ such that $C_{1}\left\vert x\right\vert ^{-\frac
{N-p}{p-1}}\leqq E(x)\leqq C_{2}\left\vert x\right\vert ^{-\frac{N-p}{p-1}}$
near $0.$ Assume that (\ref{fil}) does not hold. Then there exists $r_{n}%
<\min(1/n,r_{n-1})$ such that
\[
\sup_{\left\vert x\right\vert =r_{n}}u\leq\frac{1}{n}r_{n}^{\frac{p-N}{p-1}%
}\leq\frac{1}{nC_{1}}E(r_{n}).
\]
Next we use the comparison theorem in the annulus $\mathcal{C}_{n}%
\mathcal{=}\left\{  x\in\mathbb{R}^{N}:r_{n}\leq\left\vert x\right\vert
\leq\frac{1}{2}\right\}  $ for functions in $W_{loc}^{1,p}\left(
\mathcal{C}\right)  \cap C(\overline{\mathcal{C}_{n}}),$ and we find that
\[
u(x)\leq\frac{1}{nC_{1}}E(x)+\max_{\left\vert x\right\vert =\frac{1}{2}%
}u,\qquad\text{in }\mathcal{C}_{n}.
\]
Going to the limit as $n\rightarrow\infty,$ we deduce that $u$ is
bounded.\medskip
\end{proof}

Our next lemma complements the results of \cite[Proposition 2.2]{BP}:

\begin{lemma}
\label{fon}Assume that $\mathcal{A}_{p}$ is W-$p$-C. Let $f\in L_{loc}%
^{1}(B_{1}^{\prime}),f\geqq0.$ Let $u\in W_{loc}^{1,p}(B_{1}^{\prime})$ be
nonnegative, such that
\[
-\mathcal{A}_{p}u+f\leqq0\text{ \qquad in }B_{1}^{\prime}.
\]
If $\left\vert x\right\vert ^{\frac{N-p}{p-1}}u$ is bounded near $0,$ then
$f\in L_{loc}^{1}(B_{1}).$
\end{lemma}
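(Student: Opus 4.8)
The plan is to work in a fixed ball $B(x_0,2\rho)\subset B_1'$ away from the origin, apply the integral estimate of Proposition \ref{Y}, and then sum these local estimates over a dyadic decomposition of a punctured neighbourhood of $0$. First I would fix $\ell>p-1$ and, for $0<r\leq\tfrac14$, cover the annulus $A_r=\{r\le|x|\le 2r\}$ by a bounded number (depending only on $N$) of balls $B(x_i,\rho_i)$ with $\rho_i$ comparable to $r$ and $B(x_i,2\rho_i)\subset B_1'$. On each such ball Proposition \ref{Y} (in the form \eqref{inksi}, or \eqref{ino} after normalising) gives
\[
\int_{B(x_i,\rho_i)} f \;\le\; C\,|S_\xi|\,r^{-p}\Big(\oint_{S_\xi} u^{\ell}\Big)^{\frac{p-1}{\ell}},
\]
where $S_\xi$ is contained in a fixed dilate of $A_r$. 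Using $|S_\xi|\le C r^{N}$ this reads $\int_{A_r} f \le C r^{N-p}\big(r^{-N}\int_{\widetilde A_r} u^\ell\big)^{(p-1)/\ell}$, with $\widetilde A_r$ a slightly enlarged annulus still bounded away from $0$.

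Next I would insert the hypothesis that $|x|^{\frac{N-p}{p-1}}u$ is bounded near $0$, say $u(x)\le M|x|^{-\frac{N-p}{p-1}}$ for $|x|$ small. Then $\int_{\widetilde A_r}u^\ell \le C M^\ell r^{N}\, r^{-\ell\frac{N-p}{p-1}}$, so that
\[
\int_{A_r} f \;\le\; C r^{N-p}\Big(M^\ell r^{-\ell\frac{N-p}{p-1}}\Big)^{\frac{p-1}{\ell}}
\;=\; C M^{p-1} r^{\,N-p-(N-p)}\;=\;C M^{p-1}.
\]
The key point is that the exponent of $r$ collapses to $0$: the choice of the Keller–Osserman/fundamental-solution exponent $\frac{N-p}{p-1}$ is exactly the borderline making $\int_{A_r}f$ bounded independently of $r$. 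Taking $r=2^{-k}$ and summing over $k\ge k_0$ would then give $\int_{\{0<|x|<2^{-k_0}\}} f = \sum_k \int_{A_{2^{-k}}} f$, but this is a sum of infinitely many uniformly bounded terms, so more care is needed — the cheap bound is not summable.

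To fix this I would not use the crude pointwise bound on all of $\widetilde A_r$ but rather keep the averaged quantity and exploit that $\int_{B_1'}|x|^{-\ell\frac{N-p}{p-1}}\,dx$ need not converge, so instead I would argue as follows: apply Proposition \ref{Y} once, on a family of balls whose union covers $\{0<|x|<\tfrac14\}$ with bounded overlap, using Remark \ref{part} (the additive form \eqref{inde}) to get in one stroke
\[
\int_{\{|x|<\frac14\}} f\,\varphi \;\le\; C\sum_i |S_\xi^i|\,(\mathrm{diam}\,S_\xi^i)^{-p}\Big(\oint_{S_\xi^i}u^\ell\Big)^{\frac{p-1}{\ell}},
\]
and then bound the right-hand side by $C\sum_i \int_{S_\xi^i} |x|^{-p}\big(\text{average of }u^\ell\big)^{(p-1)/\ell}$; Hölder on each annular piece converts this into $C\int_{\{|x|<\frac12\}} |x|^{-p} u^{p-1}\,dx$ up to the bounded-overlap constant. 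Finally the pointwise hypothesis $u\le M|x|^{-\frac{N-p}{p-1}}$ gives $|x|^{-p}u^{p-1}\le M^{p-1}|x|^{-p-(N-p)}=M^{p-1}|x|^{-N}$, which is still not integrable — so in fact one must retain a small gain. I expect the genuine argument replaces the crude bound by $u^{p-1}\le M^{p-1-\epsilon}u^{p-1-\epsilon}\cdot(\text{something})$...

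Let me restate more honestly: the main obstacle is precisely that the borderline exponent makes the naive estimate fail logarithmically, so the real proof must use that $\big(\oint u^\ell\big)^{(p-1)/\ell}$ with $\ell>p-1$ is genuinely smaller than $\oint u^{p-1}$ unless $u$ is essentially constant on the annulus, OR — more likely the route the authors take — it uses the refined weak Harnack inequality (Lemma \ref{Har}) to replace $u$ on the enlarged annulus by $\sup$ on a slightly smaller one with an $\varepsilon$-dependent constant, then runs a bootstrap via Lemma \ref{boot} in the radial variable to upgrade "$|x|^{\frac{N-p}{p-1}}u$ bounded" to a summable decay of $\int_{A_r}f$. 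Concretely I would: (1) from the refined Harnack write $\big(\oint_{\widetilde A_r}u^\ell\big)^{1/\ell}\le C\varepsilon^{-c}\sup_{A_{r(1+\varepsilon)}}u$; (2) plug this into the local Proposition \ref{Y} estimate to get $\int_{A_r}f\le C\varepsilon^{-c}r^{N-p}\big(\sup_{A_{r(1+\varepsilon)}}u\big)^{p-1}$; (3) since $-\mathcal A_p u\le 0$ there is no lower-order obstruction, and one does not even need $f$ here — the finiteness of $\int_{A_r}f$ summed dyadically then follows once $\sup_{A_r}u\le M r^{-\frac{N-p}{p-1}}$ is combined with the fact that one may instead integrate $f$ against the comparison solution $E(x)\sim|x|^{-\frac{N-p}{p-1}}$ rather than against $\varphi$. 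Indeed, testing \eqref{ws} with a cutoff of $E$ (which is legitimate since $\mathrm A_p(x,\nabla u)\in L^{p'}_{loc}$) and using $-\mathcal A_p E=\delta_0$ away from a neighbourhood of $0$, together with the boundary term controlled by $|x|^{\frac{N-p}{p-1}}u$ being bounded, yields $\int_{B_{1/2}} f\,E<\infty$; since $E\ge C|x|^{-\frac{N-p}{p-1}}\ge C$ near $0$, this forces $f\in L^1_{loc}(B_{1/2})$, hence $f\in L^1_{loc}(B_1)$.

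So the cleanest plan is the last one: use the fundamental solution $E$ from $(C_p)$ (as in the proof of Lemma \ref{supu}) as a test function, pick up the Dirac mass $\delta_0$ with a favorable sign, and control the resulting boundary/gradient terms near $0$ using the hypothesis on $|x|^{\frac{N-p}{p-1}}u$; the hard part is making the integration-by-parts rigorous up to the singularity, which is handled by truncating $E$ near $0$ and passing to the limit exactly as in Lemma \ref{supu}.
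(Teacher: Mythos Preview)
Your proposal has a genuine gap: you never hit on the actual mechanism, and your final suggested route requires hypotheses the lemma does not assume.

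The paper's proof avoids the dyadic summation entirely. One applies Proposition~\ref{Y} with a \emph{single} cutoff $\xi$ that equals $1$ on the whole annulus $\{\rho\le|x|\le\tfrac12\}$ and vanishes outside $\{\tfrac{\rho}{2}\le|x|\le\tfrac34\}$. Then $S_\xi=\mathrm{supp}\,|\nabla\xi|$ splits into two disjoint pieces, an inner one $\{\tfrac{\rho}{2}\le|x|\le\rho\}$ and a fixed outer one $\{\tfrac12\le|x|\le\tfrac34\}$. By Remark~\ref{part} (with $\ell=p$, say),
\[
\int_{\rho\le|x|\le\frac12} f \;\le\; C\rho^{N-p}\Big(\oint_{\frac{\rho}{2}\le|x|\le\rho} u^{\ell}\Big)^{\frac{p-1}{\ell}} \;+\; C\Big(\oint_{\frac12\le|x|\le\frac34} u^{\ell}\Big)^{\frac{p-1}{\ell}}.
\]
The second term is a fixed constant independent of $\rho$. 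For the first, the hypothesis $u\le M|x|^{-\frac{N-p}{p-1}}$ gives the bound $C\rho^{N-p}\cdot M^{p-1}\rho^{-(N-p)}=CM^{p-1}$, also independent of $\rho$. Hence $\int_{\rho\le|x|\le 1/2}f$ is bounded uniformly, and letting $\rho\to0$ yields $f\in L^1(B_{1/2})$. There is no infinite sum, so the ``logarithmic divergence'' you diagnosed never arises: the right-hand side sees $u$ only on the thin inner shell, not on the whole punctured ball.

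Your final plan, to test against a fundamental solution $E$ as in Lemma~\ref{supu}, cannot work under the stated hypotheses. Lemma~\ref{fon} assumes only that $\mathcal{A}_p$ is W-$p$-C; the existence of $E$ with two-sided bounds $C_1|x|^{-\frac{N-p}{p-1}}\le E\le C_2|x|^{-\frac{N-p}{p-1}}$ and the comparison argument in Lemma~\ref{supu} require the stronger structural conditions $(C_p)$ (monotonicity and homogeneity, and in particular that $\mathrm{A}_p$ does not depend on $u$). Moreover $E$ is not an admissible test function in \eqref{ws}, and making the truncation rigorous would again need $(C_p)$. The intermediate ideas you sketch (refined Harnack, bootstrap via Lemma~\ref{boot}) are also unnecessary and, for the Harnack part, again require S-$p$-C rather than W-$p$-C.
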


\begin{proof}
Let $0<\rho<\frac{1}{2}.$ Here we apply Proposition \ref{Y} with $\varphi
=\xi^{\lambda}$ given by
\[
\xi=1\text{ for }\rho<\left\vert x\right\vert <\frac{1}{2},\quad\xi=0\text{
for }\left\vert x\right\vert \leqq\frac{\rho}{2}\text{or }\left\vert
x\right\vert \geqq\frac{3}{4},\quad|\nabla\xi|\leq\frac{C_{0}}{\rho}.
\]
From Remark \ref{part}, we find with for example $\ell=p,$
\begin{equation}
\int_{\rho\leqq\left\vert x\right\vert \leqq\frac{1}{2}}f\leqq C\rho
^{N-p}\left(  \oint_{\frac{\rho}{2}\leqq\left\vert x\right\vert \leqq\rho
}u^{\ell}\right)  ^{\frac{p-1}{\ell}}+C\left(  \oint_{\frac{1}{2}%
\leqq\left\vert x\right\vert \leqq\frac{3}{4}}u^{\ell}\right)  ^{\frac
{p-1}{\ell}}. \label{lam}%
\end{equation}
Hence from our assumption on $u,$ the integral is bounded, then $f\in
L^{1}(B_{\frac{1}{2}}).\medskip$
\end{proof}

\begin{proof}
[Proof of Theorem \ref{remabs}](i) Suppose that $\gamma_{a,b}\leq\frac
{N-p}{p-1}.$ Then $u(x_{0})\leq C\left\vert x_{0}\right\vert ^{-\frac
{N-p}{p-1}}.$ Let us show that $u$ is bounded. If $\gamma_{a,b}<\frac
{N-p}{p-1}$ it is a direct consequence of Lemma \ref{supu}. Then we can assume
$\gamma_{a,b}=\frac{N-p}{p-1}.$ If $u$ is not bounded, then (\ref{fil}) holds
for some $C>0.$ Let us set $f=\left\vert x\right\vert ^{a}v^{\delta}.$ From
(\ref{bbla}) with $\varepsilon=\frac{1}{4}$ then for any $r_{0}\leq\frac{1}%
{2}$ and any $x_{0}$ such that $\left\vert x_{0}\right\vert =r_{0},$ and Lemma
\ref{Har}, taking $\rho=\frac{r_{0}}{4},$
\begin{align*}
u^{\mu}(x_{0})  &  \leq C\oint_{B(x_{0},\rho)}u^{\mu}\leq Cr_{0}%
^{-(q+b)-N\frac{q-1}{\delta}}\left(  \int_{B(x_{0},2\rho)}v^{\delta}\right)
^{\frac{q-1}{\delta}}\\
&  \leq Cr_{0}^{-(q+b)-(N+a)\frac{q-1}{\delta}}\left(  \int_{\frac{r_{0}}%
{2}\leqq\left\vert x\right\vert \leqq\frac{3r_{0}}{2}}f\right)  ^{\frac
{q-1}{\delta}},
\end{align*}
then
\[
Cr_{0}^{-\mu\gamma_{a,b}}=Cr_{0}^{-(q-1)\xi_{a,b}-q-b}\leq\sup_{\left\vert
x\right\vert =r_{0}}u^{\mu}\leq Cr_{0}^{-(q+b)-(N+a)\frac{q-1}{\delta}}\left(
\int_{\frac{r_{0}}{2}\leqq\left\vert x\right\vert \leqq\frac{3r_{0}}{2}%
}f\right)  ^{\frac{q-1}{\delta}},
\]%
\[
Cr_{0}^{-(q-1)\xi_{a,b}\frac{\delta}{q-1}+(N+a)}=Cr_{0}^{0}=C\leq\int
_{\frac{r_{0}}{2}\leqq\left\vert x\right\vert \leqq\frac{3r_{0}}{2}}f;
\]
then for any $n\in\mathbb{N},$
\[
C\leq\int_{\frac{r_{0}}{2.3^{n}}\leqq\left\vert x\right\vert \leqq\frac{r_{0}%
}{2.3^{n-1}}}f.
\]
By summation it contradicts Lemma \ref{fon}. Similarly for $v.\medskip
\medskip$

\noindent(ii) Suppose that $(u,v)$ is a solution of $(A_{w})$ and $u$ is
bounded and $\delta\geq\frac{(p+a)(q-1)}{N-q}.$ Here $v$ satisfies equation
$\mathcal{A}_{q}v=g$ with $g=\left\vert x\right\vert ^{b}u^{\mu}\leqq
C\left\vert x\right\vert ^{b},$ thus $g\in L^{N/q+\varepsilon}(\Omega)$ for
some $\varepsilon>0,$ then from \cite{S1}, \cite{S2}, if $v$ is not bounded
near 0, then there exist $C_{1},C_{2}>0$ such that
\[
C_{1}\left\vert x\right\vert ^{-\frac{N-q}{q-1}}\leqq v\leqq C_{2}\left\vert
x\right\vert ^{-\frac{N-q}{q-1}}%
\]
near $0$. If $\delta>\frac{(p+a)(q-1)}{N-q}$ then
\[
\mathcal{A}_{p}u=|x|^{a}v^{\delta}\geq C_{1}|x|^{a-\delta\frac{N-q}{q-1}%
}=C_{1}|x|^{-p-\varepsilon},
\]
for some $\varepsilon>0,$ then from (\ref{bla}),
\[
\rho^{-p-\varepsilon}\leqq C\oint_{\varphi}|x|^{-p-\varepsilon}\leq C\rho
^{-p}\left(  \oint_{\varphi}u^{\ell}\right)  ^{\frac{p-1}{\ell}}\leqq
C\rho^{-p},
\]
which is a contradiction. If $\delta=\frac{(p+a)(q-1)}{N-q},$ then
\[
C_{2}|x|^{-p}\geq\mathcal{A}_{p}u=|x|^{a}v^{\delta}\geq C_{1}|x|^{-p}.
\]
Otherwise $u$ is bounded by some $M$ in a ball $B_{r}^{\prime}.$ Then the
function $w=M-u$ is nonnegative and bounded and satisfies
\[
-\mathcal{A}_{p}w\geq C_{1}|x|^{-p}\qquad\text{in }B_{r}^{\prime}.
\]
But for $\mathcal{A}_{p}=\Delta_{p},$ there is no bounded solution of this
inequality, from \cite[Proposition 2.7]{BP}, we reach a
contradiction.$\medskip$
\end{proof}

\begin{remark}
The results obviously apply to the scalar case, finding again and improving a
result of \cite{VaV}.$\medskip$
\end{remark}

\begin{proof}
[Proof of Theorem \ref{remmix}](i) Assume $\gamma_{a,b}\leq\frac{N-p}{p-1}.$
The proof of part (i) of Theorem \ref{remabs} is still valid and shows that
$u$ is bounded.$\medskip$

\noindent(ii) Assume $\gamma_{a,b}>\frac{N-p}{p-1}$ and $\mu>\frac
{(N+b)(p-1)}{N-p}.$ Then $\xi_{a,b}>\frac{N-q}{q-1},$ thus the estimate
(\ref{oth}) for $v$ gives $v(x_{0})\leq C\left\vert x_{0}\right\vert
^{-\frac{N-q}{q-1}}$, then
\[
u^{\mu}(x_{0})\leq C\left\vert x_{0}\right\vert ^{-(q+b)}v^{(q-1)}(x_{0})\leq
C\left\vert x_{0}\right\vert ^{-(N+b)}.
\]
Then $\rho^{\frac{N-p}{p-1}}\sup_{\left\vert x\right\vert =\rho}u$ tends to
$0,$ hence $u$ is bounded from Lemma \ref{supu}.
\end{proof}

\begin{remark}
Let us give an alternative proof of (i): the punctual inequality (\ref{pol})
implies that near 0,
\[
\mathcal{A}_{p}u\geq|x|^{a}v^{\delta}\geq C|x|^{a+\delta(q+b)/(q-1)}%
u^{\mu\delta/(q-1)};
\]
then we are reduced to a simple scalar inequality:
\begin{equation}
-\mathcal{A}_{p}u+|x|^{m}u^{Q}\leq0, \label{hij}%
\end{equation}
with $Q=\frac{\mu\delta}{q-1}>p-1$ and $m=a+\frac{\delta(q+b)}{q-1}>-p.$ And
$\gamma_{a,b}=\frac{m+p}{Q+1-p}\leq\frac{N-p}{p-1};$ applying Theorem
\ref{remabs} to the scalar inequality (\ref{hij}), we find again that $u$ is bounded.
\end{remark}

\section{Sharpness of the results\label{cinq}}

In this last section we show the optimality of our results by constructing
some radial solutions of systems $(A_{w})$ or $(M_{w})$ in case $\mathcal{A}%
_{p}=\Delta_{p},\mathcal{A}_{q}=\Delta_{q}.$ They are based on the
transformation introduced in \cite{BGi}, valid for systems with any sign:
\[
\left\{
\begin{array}
[c]{c}%
-\Delta_{p}u=-\operatorname{div}(\left\vert \nabla u\right\vert ^{p-2}\nabla
u)=\varepsilon_{1}\left\vert x\right\vert ^{a}v^{\delta},\\
-\Delta_{q}v=-\operatorname{div}(\left\vert \nabla v\right\vert ^{q-2}\nabla
u)=\varepsilon_{2}\left\vert x\right\vert ^{b}u^{\mu},
\end{array}
\right.
\]
with $\varepsilon_{1}=-1=\varepsilon_{2}$ for the system with absorption, and
$\varepsilon_{1}=-1,\varepsilon_{2}=1$ for the mixed system: setting
\[
X(t)=-\frac{ru^{\prime}}{u},\quad Y(t)=-\frac{rv^{\prime}}{v},\quad
Z(t)=-\varepsilon_{1}r^{1+a}u^{s}v^{\delta}\frac{u^{\prime}}{\left\vert
u^{\prime}\right\vert ^{p}},\quad W(t)=-\varepsilon_{2}r^{1+b}u^{\mu}%
v^{m}\frac{v^{\prime}}{\left\vert v^{\prime}\right\vert ^{q}},
\]
where $t=\ln r,$ and we obtain the system
\[
(\Sigma)\left\{
\begin{array}
[c]{c}%
X_{t}=X\left[  X-\frac{N-p}{p-1}+\frac{Z}{p-1}\right]  ,\\
Y_{t}=Y\left[  Y-\frac{N-q}{q-1}+\frac{W}{q-1}\right]  ,\\
Z_{t}=Z\left[  N+a-\delta Y-Z\right]  ,\\
W_{t}=W\left[  N+b-\mu X-W\right]  .
\end{array}
\right.
\]
And $u,v$ are recovered from $X,Y,Z,W$ by the relations
\begin{equation}
u\mathbf{=}r^{-\gamma_{a,b}}\mathbf{(}\left\vert X\right\vert ^{p-1}%
Z)^{(q-1)/D}\mathbf{(}\left\vert Y\right\vert ^{q-1}W)^{\delta/D}%
,\mathbf{\qquad}v\mathbf{=}r^{-\xi_{a,b}}\mathbf{(}\left\vert X\right\vert
^{p-1}Z)^{\mu/D}\mathbf{(}\left\vert Y\right\vert ^{q-1}W)^{(p-1)/D}.
\label{form}%
\end{equation}

\subsection{About Harnack inequality}

Here we show that Harnack inequality can be \textit{false} in case of system
$(A_{w})$ and also for the function $u$ of system $(M_{w}),$ even in the
radial case; indeed we construct nonnegative radial solutions of system
$(A_{w})$ in a ball such that $u(0)=0<v(0),$ or by symmetry $u(0)>0=v(0)$ and
solutions of system $(M_{w})$ such that $u(0)=0<v(0).$ Such solutions were
constructed in \cite{GMLS} by using Schauder theorem, and in \cite{BGY3} in
the case of system $(A_{w})$ for $p=q=2$ by using system $(\Sigma).$ Here we
show that the construction of \cite{BGY3} extends to the general case. We
consider the radial regular solutions, which are $C^{2}$ if $a,b\geq0,$ and
$C^{1}$ if $a,b>-1.$

\begin{proposition}
\label{exi} Suppose that $\mathcal{A}_{p}=\Delta_{p}$ and $\mathcal{A}%
_{q}=\Delta_{q}.$ For any $v_{0}>0,$ there exists a regular radial solution of
$(A_{w})$ and $(M_{w})$ such that $u(0)=0<v(0)=v_{0}.$
\end{proposition}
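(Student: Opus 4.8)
The idea is to build the solution on the dynamical system $(\Sigma)$ near the critical point corresponding to a regular solution, and then use the recovery formulas (\ref{form}) to read off $u$ and $v$. The regular radial solution should behave, near $r=0$, like $u\sim c_1 r^{p+a \over p-1}$ (because the equation $\Delta_p u = \pm|x|^a v^\delta$ with $v(0)=v_0>0$ forces $u'(0)=0$ and a precise power growth) and $v\to v_0>0$. In the $(X,Y,Z,W)$ variables this corresponds to the stationary point
\[
P_0=\Bigl(0,\,0,\,N+a,\,0\Bigr),
\]
since $X=-ru'/u\to 0$ as $t=\ln r\to-\infty$ (indeed $ru'/u \sim {p+a\over p-1}\cdot\tfrac{1}{?}$—more precisely one checks $X\to 0$ when $u$ vanishes like a positive power, because then $ru'/u$ is bounded but $Z$ absorbs the scaling), $Y\to 0$ since $v\to v_0$, $Z\to N+a$ from the equation $Z_t=Z[N+a-\delta Y-Z]$, and $W\to 0$ since $v'(0)=0$. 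First I would verify that $P_0$ is indeed a stationary point of $(\Sigma)$ and compute the linearization there.

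The key step is to analyze the linearized system at $P_0$ and exhibit a trajectory that converges to $P_0$ as $t\to-\infty$ while staying in the physically meaningful region $X,Z,W\ge 0$ (and $Y$ of the appropriate sign; for $(A_w)$ one has $Y>0$, for $(M_w)$ one has $Y<0$). One expects $P_0$ to have a nontrivial unstable manifold (eigenvalues with positive real part governing the approach from $t=-\infty$), parametrized essentially by the free constant $v_0$ together with the shooting parameter that is $u''(0)$ or the coefficient $c_1$. The eigenvalues of the linearization are read off from the diagonal-ish structure of $(\Sigma)$: near $P_0$, $X_t\approx X[-\tfrac{N-p}{p-1}+\tfrac{N+a}{p-1}]=X\cdot\tfrac{p+a}{p-1}$, $Y_t\approx Y[-\tfrac{N-q}{q-1}]$, $Z_t\approx Z[-(Z-(N+a))]$ wait—$Z_t\approx -(N+a)(Z-(N+a))$, and $W_t\approx (N+b)W$. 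So the eigenvalues are $\tfrac{p+a}{p-1}>0$, $-\tfrac{N-q}{q-1}<0$, $-(N+a)<0$, $N+b>0$ (recall $b>-q>-N$), giving a two-dimensional unstable manifold. Trajectories on this unstable manifold, for the correct signs of the $X$- and $W$-components, will yield a one-parameter family (after scaling) of regular solutions, and by a normalization/scaling argument one can arrange $v(0)=v_0$ for any prescribed $v_0>0$. The vanishing $u(0)=0$ is automatic from $X\to 0$ together with the factor $r^{-\gamma_{a,b}}$ and the powers of $Z,W$ in (\ref{form}): one should check that the product $r^{-\gamma_{a,b}}(|X|^{p-1}Z)^{(q-1)/D}(|Y|^{q-1}W)^{\delta/D}\to 0$.

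The main obstacle will be the global continuation: showing that the local trajectory emanating from the unstable manifold of $P_0$ persists for all $t$ up to some finite $t_0=\ln R$ without blowing up or leaving the admissible region, so that it actually defines a solution on a genuine ball $B_R$. This requires either monotonicity/invariant-region arguments for $(\Sigma)$ (finding a bounded invariant box containing the relevant part of the unstable manifold) or a barrier/comparison argument on the original PDE. A secondary technical point is the regularity claim ($C^2$ if $a,b\ge 0$, $C^1$ if $a,b>-1$): this follows from standard elliptic regularity for the $p$-Laplacian once the right-hand sides are shown to be Hölder (resp. bounded) near $0$, which in turn follows from the asymptotics $u\sim c_1 r^{(p+a)/(p-1)}$, $v\to v_0$. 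I would also need to confirm that the sign choices $\varepsilon_1,\varepsilon_2$ for the two systems do not obstruct the construction—the unstable directions in $W$ differ only by the sign of $W$, and since $W$ appears through $|Y|^{q-1}W$ one must track signs carefully, but the construction of \cite{BGY3} in the case $p=q=2$ shows the scheme goes through, and the general $p,q$ case changes only the numerical values of the exponents, not the structure.
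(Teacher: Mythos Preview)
Your approach via the dynamical system $(\Sigma)$ is the right framework, but you have identified the \emph{wrong fixed point}, and this invalidates the rest of the argument.

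You claim that the desired solution corresponds to the stationary point $P_0=(0,0,N+a,0)$, arguing that ``$X\to 0$ when $u$ vanishes like a positive power''. This is false. If $u(r)\sim c\,r^{\alpha}$ with $\alpha=(p+a)/(p-1)>0$ (which is indeed the correct local behaviour forced by $\Delta_p u=\pm|x|^a v^\delta$ with $v(0)=v_0>0$), then
\[
X=-\frac{ru'}{u}\longrightarrow -\alpha=-\frac{p+a}{p-1}\neq 0.
\]
Likewise, $W$ does \emph{not} tend to $0$: inserting $u\sim c\,r^{(p+a)/(p-1)}$ into the $v$-equation and computing the asymptotics of $v'$ shows that $W\to N+b+\mu\frac{p+a}{p-1}>0$; this is also what the equation $W_t=W[N+b-\mu X-W]$ forces once $X\to -\frac{p+a}{p-1}$. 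The correct fixed point is therefore
\[
S_0=\Bigl(-\tfrac{p+a}{p-1},\,0,\,N+a,\,N+b+\mu\tfrac{p+a}{p-1}\Bigr),
\]
as in the paper.

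This matters structurally: the linearization at $S_0$ has eigenvalues $\lambda_1=\bar X<0$, $\lambda_2=\frac{1}{q-1}\bigl(q+b+\mu\frac{p+a}{p-1}\bigr)>0$, $\lambda_3=-(N+a)<0$, $\lambda_4=-\bar W<0$, so the unstable manifold is \emph{one}-dimensional, transverse to $\{Y=0\}$. This yields a unique trajectory with the correct signs of $Y$ (negative for $(A_w)$, positive for $(M_w)$) and $Z,W>0$, hence a single solution up to the scaling that fixes $v_0$. Your two-dimensional unstable manifold at $P_0$ does not parametrize the solutions you want; trajectories converging to $P_0$ as $t\to-\infty$ have $X\to 0$, which is incompatible with the actual asymptotics of a regular solution satisfying $u(0)=0$. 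Once you replace $P_0$ by $S_0$, the verification that $v\to v_0>0$ and $u\to 0$ via (\ref{form}) is straightforward from $\lim e^{-\lambda_2 t}Y=c>0$, and no separate global-continuation argument is needed.
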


\begin{proof}
The regular solutions $(u,v)$ with nonnegative initial data $(u_{0},v_{0}%
)\neq\left(  0,0\right)  $ are increasing for system $(A_{w})$, hence
$X,Y<0<Z,W$ and $u$ is increasing and $v$ is decreasing for system $(M_{w})$,
hence $X<0<Y$ and $Z,W>0.$ As shown in \cite{BGi}, the solutions $(u,v)$ with
$u(0)=u_{0}>0$ and $v(0)=v_{0}>0$ correspond to the trajectories of system
($\Sigma$) converging to the fixed point $N_{0}=(0,0,N+a,N+b)$ as
$t\longrightarrow-\infty,$ and local existence and uniqueness holds as in
\cite[Proposition 4.4]{BGi}. As in \cite{BGY3} the solutions such that
$u_{0}=0<v_{0}$ correspond to a trajectory converging to the point
$S_{0}=\left(  \bar{X},0,\bar{Z},\bar{W}\right)  =\left(  -\frac{p+a}%
{p-1},0,N+a,N+b+\mu\frac{p+a}{p-1}\right)  .$ The linearization at $S_{0}$
gives the eigenvalues
\[
\lambda_{1}=\bar{X}<0,\quad\lambda_{2}=\frac{1}{q-1}(q+b+\mu\frac{p+a}%
{p-1})>0,\quad\lambda_{3}=-\bar{Z}<0,\quad\lambda_{4}=-\bar{W}<0.
\]
Then the unstable manifold $\mathcal{V}_{u}$ has dimension 1 and
$\mathcal{V}_{u}\cap\left\{  Y=0\right\}  =\emptyset$, thus there exists a
unique trajectory such that $Y<0$ (resp. $Y>0)$ and $Z,W>0$. There holds
$\lim_{t\rightarrow-\infty}e^{-\lambda_{2}t}Y=c>0,$ $\lim X=\bar{X},$ $\lim
Z=\bar{Z},$ $\lim W=\bar{W},$ then from (\ref{form}) $v\mathbf{\ }$ has a
positive limit $v_{0}$, and $u$ tends to 0. By scaling we obtain the existence
and uniqueness of solutions for any $v_{0}>0$.
\end{proof}

\subsection{About removability}

Here also we show that the results of Theorems \ref{remabs} and \ref{remmix}
are optimal, by constructing singular solutions when the assumptions are not
satisfied. We begin by system $(A_{w}),$ extending \cite[Proposition
3.2]{BGY3}. Obviously it admits a particular singular solution when
$\gamma_{a,b}>\frac{N-p}{p-1}$ and $\xi_{a,b}>\frac{N-q}{q-1}.$ Moreover we
find other types of singular solutions:

\begin{proposition}
\label{singabs}Consider system $(A_{w})$ with $\mathcal{A}_{p}=\Delta_{p}$ and
$\mathcal{A}_{q}=\Delta_{q}.\medskip$

\noindent(i) If $\mu<\frac{(q+b)(p-1)}{N-p},$ there exist solutions such that
\[
\lim_{\rho\rightarrow0}\rho^{\frac{N-p}{p-1}}u=\alpha>0,\qquad\lim
_{\rho\rightarrow0}v=\beta>0.
\]
$\medskip$

\noindent(ii) If $\delta<\frac{(N+a)(q-1)}{N-q}$ and $\mu<\frac{(N+b)(p-1)}%
{N-p},$ there exist solutions such that
\[
\lim_{\rho\rightarrow0}\rho^{\frac{N-p}{p-1}}u=\alpha>0,\qquad\lim
_{\rho\rightarrow0}\rho^{\frac{N-q}{q-1}}v=\beta>0.
\]
$\medskip$

\noindent(iii) If $\gamma_{a,b}>\frac{N-p}{p-1},$ and either $\mu
>\frac{(N+b)(p-1)}{N-p}$ or $\mu<\frac{(q+b)(p-1)}{N-p}$, there exist
solutions such that
\[
\lim_{\rho\rightarrow0}\rho^{\frac{N-p}{p-1}}u=\alpha>0,\qquad\lim
_{\rho\rightarrow0}\rho^{\frac{1}{q-1}(\frac{N-p}{p-1}\mu-(q+b))}%
v=\beta(\alpha)>0.
\]
$\medskip$

\noindent The results extend by symmetry, after exchanging $u,v,a,\gamma
_{a,b}$ and $v,u,b,\xi_{a,b}.$
\end{proposition}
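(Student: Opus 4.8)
The plan is to use the dynamical-system formulation $(\Sigma)$ from \cite{BGi}, exactly as in the existence proof of Proposition \ref{exi}, and to identify for each case a fixed point whose unstable (or stable-in-backward-time) manifold produces a trajectory with the prescribed asymptotics at $t\to-\infty$ (i.e.\ $\rho\to0$). The translation dictionary is (\ref{form}): a finite positive limit of $|X|^{p-1}Z$ and $|Y|^{q-1}W$ at $N_0=(0,0,N+a,N+b)$ gives regular behaviour $u,v\to$ const, while a boundary equilibrium on one of the coordinate hyperplanes encodes the blow-up rate $\rho^{-(N-p)/(p-1)}$ for $u$ or $\rho^{-(N-q)/(q-1)}$ for $v$. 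Concretely, the singular behaviour $\rho^{(N-p)/(p-1)}u\to\alpha>0$ corresponds (by the scalar theory recalled via \cite{S1},\cite{S2}) to $X\to\bar X=-(N-p)/(p-1)$ and $Z\to0$ in such a way that $|X|^{p-1}Z$ is nonzero, which in turn forces on the $Z$-equation the condition that the coefficient $N+a-\delta Y-Z$ be positive along the approach — this is where the inequalities on $\delta,\mu$ in each case come from.

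First, for (i): I would linearize $(\Sigma)$ at the fixed point $P_1=\bigl(-\tfrac{N-p}{p-1},\,\bar Y,\,0,\,\bar W\bigr)$ where $Y,W$ take their ``regular for $v$'' values forced by the $Y_t$ and $W_t$ equations with $Z=0$; compute the four eigenvalues; and check that the hypothesis $\mu<\frac{(q+b)(p-1)}{N-p}$ is precisely what makes the relevant eigenvalue have the right sign so that a one-dimensional branch of trajectories enters $P_1$ from the region $Z>0$ as $t\to-\infty$. Then (\ref{form}) gives $u\sim\alpha\rho^{-(N-p)/(p-1)}$ and $v\to\beta>0$. For (ii) I would do the same at the equilibrium where \emph{both} $Z\to0$ and $W\to0$, i.e.\ essentially the point corresponding to two fundamental-solution singularities, requiring both $\delta<\frac{(N+a)(q-1)}{N-q}$ and $\mu<\frac{(N+b)(p-1)}{N-p}$ to get the two sign conditions on the $Z$- and $W$-equations. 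Case (iii) is the mixed-rate case: $u$ blows up like the fundamental solution while $v$ blows up with the nonstandard exponent $\tfrac{1}{q-1}\bigl(\tfrac{N-p}{p-1}\mu-(q+b)\bigr)$ coming from balancing $W_t$'s term $N+b-\mu X-W$ with $X\to\bar X$; here $Z\to0$ but $W$ tends to a positive constant $\bar W$, and the two sub-cases $\mu>\frac{(N+b)(p-1)}{N-p}$ versus $\mu<\frac{(q+b)(p-1)}{N-p}$ select whether the trajectory lies on the one- or two-dimensional piece of the unstable manifold, with $\beta$ depending on $\alpha$ through the scaling invariance of the system.

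In each case the final step is a scaling/normalization argument: $(\Sigma)$ is autonomous, so a trajectory on the appropriate invariant manifold is unique up to translation in $t$, which after undoing $t=\ln r$ corresponds to the free constant $\alpha>0$ (and then $\beta$ or $\beta(\alpha)$ is determined), so one genuinely gets a one-parameter family; and one must verify that along the selected trajectory $X,Y$ stay negative and $Z,W$ stay positive on all of $(-\infty,t_0]$ so that $u,v$ recovered from (\ref{form}) are genuine positive solutions on a punctured ball. The main obstacle I anticipate is the eigenvalue bookkeeping at the boundary equilibria together with checking that the unstable manifold actually meets the physically admissible octant $\{X<0<Y,\ Z,W>0\}$ rather than escaping it — this is the step where the precise inequalities in (i), (ii), (iii) are used, and it must be done carefully for each of the (up to two) eigen-directions in case (iii); the rest is the routine translation through (\ref{form}) and a backward-in-$t$ continuation argument patterned on \cite[Proposition 3.2]{BGY3}.
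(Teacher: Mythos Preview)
Your strategy is the paper's: reduce to $(\Sigma)$, locate the relevant fixed point for each case, linearize, and use the eigenvalue signs to produce trajectories converging there as $t\to-\infty$. The execution, however, contains several concrete errors that would make the argument fail as written.

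The sign conventions are reversed: with $X=-ru'/u$ and $u\sim\alpha r^{-(N-p)/(p-1)}$ one gets $X\to+\tfrac{N-p}{p-1}$, not the negative value; and for $(A_w)$ (where $\varepsilon_1=\varepsilon_2=-1$) the admissible region for singular decreasing $u$ has $Z<0$, not $Z>0$. The actual fixed points are $G_0=(\tfrac{N-p}{p-1},0,0,\,N+b-\tfrac{N-p}{p-1}\mu)$ for (i), $A_0=(\tfrac{N-p}{p-1},\tfrac{N-q}{q-1},0,0)$ for (ii), and $P_0=(\tfrac{N-p}{p-1},Y_*,0,W_*)$ with $Y_*=\tfrac{1}{q-1}(\tfrac{N-p}{p-1}\mu-(q+b))$, $W_*=N+b-\tfrac{N-p}{p-1}\mu$ for (iii). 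Your dimension count in (i) is wrong: under $\mu<\tfrac{(q+b)(p-1)}{N-p}$ three of the four eigenvalues at $G_0$ are positive, so $\dim\mathcal V_u=3$, and the existence of a trajectory in the correct octant comes from the fact that $\mathcal V_u\cap\{Y=0\}$ and $\mathcal V_u\cap\{Z=0\}$ are each only $2$-dimensional --- not from a one-dimensional branch. In (iii) the two sub-hypotheses give $\dim\mathcal V_u=4$ or $2$ respectively, and in the first sub-case $W_*<0$ so the admissible octant has $W<0$; your assertion that $W$ tends to a positive constant throughout is incorrect. Finally, at all these fixed points $Z=0$ (and sometimes $Y=0$ or $W=0$), so $|X|^{p-1}Z\to0$; the singular asymptotics of $u,v$ are recovered not from a nonzero limit of that product but from the exponential rates of approach (the eigenvalues) fed into the $r^{-\gamma_{a,b}}$, $r^{-\xi_{a,b}}$ prefactors in (\ref{form}).
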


\begin{proof}
As in \cite{BG}, \cite{BGY3} we prove the existence of trajectories of system
$(\Sigma)$ and return to $u,v$ by using (\ref{form}).$\medskip$

\noindent(i) Such solutions correspond to trajectories converging to the fixed
point $G_{0}=(\frac{N-p}{p-1},0,0,N+b-\frac{N-p}{p-1}\mu)$ of $(\Sigma).$ The
linearization at $G_{0}$ gives the eigenvalues
\[
\lambda_{1}=\frac{N-p}{p-1}>0,\;\lambda_{2}=\frac{1}{q-1}(q+b-\frac{N-p}%
{p-1}\mu),\;\lambda_{3}=N+a>0,\;\lambda_{4}=\frac{N-p}{p-1}\mu-N-b.
\]
If $\mu<\frac{(q+b)(p-1)}{N-p}$, then $\lambda_{2},\lambda_{4}<0$. Then
$\mathcal{V}_{u}$ has dimension 3, and $\mathcal{V}_{u}\cap\left\{
Y=0\right\}  $ and $\mathcal{V}_{u}\cap\left\{  Z=0\right\}  $ have dimension
2. This implies that ${\mathcal{V}}_{u}$ must contain trajectories such that
$Y,Z<0<X,W$.$\medskip$

\noindent(ii) Such solutions correspond to the fixed point $A_{0}=\left(
\frac{N-p}{p-1},\frac{N-q}{q-1},0,0\right)  .$ All the eigenvalues are
positive:
\[
\lambda_{1}=\frac{N-p}{p-1},\;\lambda_{2}=\frac{N-q}{q-1},\;\lambda
_{3}=N+a-\delta\frac{N-q}{q-1},\;\lambda_{4}=N+b-\mu\frac{N-p}{p-1}.
\]
The unstable manifold $\mathcal{V}_{u}$ has dimension $4,$ then there exists
an infinity of trajectories converging to $A_{0}$ with $X;Y,Z,W<0$.$\medskip$

\noindent(iii) Such solutions correspond to the fixed point $P_{0}=\left(
\frac{N-p}{p-1},Y_{\ast},0,W_{\ast}\right)  ,$ with
\[
Y_{\ast}=\frac{1}{q-1}(\frac{N-p}{p-1}\mu-(q+b)),\qquad W_{\ast}%
=N+b-\frac{N-p}{p-1}\mu.
\]
The eigenvalues are given by
\[
\lambda_{1}=\frac{N-p}{p-1}>0,\quad\lambda_{2}=Y_{\ast},\quad\lambda_{3}%
=\frac{D}{q-1}(\gamma-\frac{N-p}{p-1})>0,\quad\lambda_{4}=-W_{\ast}.
\]
If $\mu>\frac{(N+b)(p-1)}{N-p}$, then $\lambda_{2},\lambda_{4}>0$ and thus
$\mathcal{V}_{u}$ has dimension $4$, then there exist trajectories, with
$X,Y,Z,W<0,$ converging to $P_{0}$. If $\mu<\frac{(q+b)(p-1)}{N-p}$, then
$\lambda_{2},\lambda_{4}<0$, $\mathcal{V}_{u}$ has dimension $2$, and
$\mathcal{V}_{u}\cap\left\{  Z=0\right\}  $ has dimension $1$, thus there also
exist trajectories with $X,Z,W<0<Y$ converging to $P_{0}.\medskip$
\end{proof}

In the same way, system $(M_{w})$ has a particular singular solution when
$\gamma_{a,b}>\frac{N-p}{p-1}$ and $\xi_{a,b}<\frac{N-q}{q-1},$ and we find
other singular solutions:

\begin{proposition}
\label{singmix}Consider system $(M_{w})$ with $\mathcal{A}_{p}=\Delta
_{p},\mathcal{A}_{q}=\Delta_{q}.\medskip$

\noindent(i) If $\gamma_{a,b}>\frac{N-p}{p-1},$ and $\xi_{a,b}>\frac{N-q}%
{q-1},$ there exist solutions such that%
\[
\lim_{\rho\rightarrow0}\rho^{\frac{N-q}{q-1}}v=\beta>0,\qquad\lim
_{\rho\rightarrow0}\rho^{\frac{1}{p-1}(\frac{N-q}{q-1}\delta-(q+a))}%
u=\beta(\alpha)>0.
\]
$\medskip$

\noindent(ii) If $\delta<\frac{(N+a)(q-1)}{N-q}$ and $\mu<\frac{(N+b)(p-1)}%
{N-p},$ there exist solutions such that
\[
\lim_{\rho\rightarrow0}\rho^{\frac{N-p}{p-1}}u=\alpha>0,\qquad\lim
_{\rho\rightarrow0}\rho^{\frac{N-q}{q-1}}v=\beta>0.
\]

\end{proposition}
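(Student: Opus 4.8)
The plan is to argue exactly as in the proof of Proposition \ref{singabs}: realise each family of singular solutions as a family of trajectories of the autonomous system $(\Sigma)$ converging, as $t=\ln r\to-\infty$, to a suitable fixed point, and then recover $u,v$ and their precise rates from the reconstruction formulas (\ref{form}). Throughout one keeps in mind that for $(M_{w})$ one has $\varepsilon_{1}=-1$, $\varepsilon_{2}=1$, so that along an admissible trajectory the signs of $X,Y,Z,W$ must be compatible with the coordinates of the target fixed point and with $(C_{p}),(C_{q})$.

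For part (ii), both $u$ and $v$ are prescribed to have the fundamental-solution rates $\rho^{-(N-p)/(p-1)}$ and $\rho^{-(N-q)/(q-1)}$, so the relevant fixed point is $A_{0}=\big(\tfrac{N-p}{p-1},\tfrac{N-q}{q-1},0,0\big)$, the same one used in Proposition \ref{singabs}(ii). Linearising $(\Sigma)$ at $A_{0}$, the Jacobian is block triangular (the $Z$- and $W$-rows vanish except on the diagonal), with eigenvalues $\lambda_{1}=\tfrac{N-p}{p-1}$, $\lambda_{2}=\tfrac{N-q}{q-1}$, $\lambda_{3}=N+a-\delta\tfrac{N-q}{q-1}$ and $\lambda_{4}=N+b-\mu\tfrac{N-p}{p-1}$; the hypotheses $\delta<\tfrac{(N+a)(q-1)}{N-q}$ and $\mu<\tfrac{(N+b)(p-1)}{N-p}$ are precisely $\lambda_{3}>0$ and $\lambda_{4}>0$. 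Hence $A_{0}$ is a source, its unstable manifold is $4$-dimensional, and there is an infinity of trajectories leaving $A_{0}$; selecting those lying in the octant with the signs forced by $\varepsilon_{1},\varepsilon_{2},(C_{p}),(C_{q})$ and inserting them into (\ref{form}) yields $u\sim\alpha\,\rho^{-(N-p)/(p-1)}$, $v\sim\beta\,\rho^{-(N-q)/(q-1)}$.

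For part (i), the prescribed rate $\rho^{(N-q)/(q-1)}v\to\beta$ says $Y\to\tfrac{N-q}{q-1}$ and $W\to 0$, while the first equation forces $u$ to behave like the $p$-potential of $|x|^{a}v^{\delta}$, i.e. $X\to\sigma:=\tfrac{1}{p-1}\big(\delta\tfrac{N-q}{q-1}-(p+a)\big)$, with $Z\to Z_{\ast}:=N+a-\delta\tfrac{N-q}{q-1}$. I would therefore linearise $(\Sigma)$ at $Q_{0}=\big(\sigma,\tfrac{N-q}{q-1},Z_{\ast},0\big)$; a direct computation gives the eigenvalues $\lambda_{1}=\sigma$, $\lambda_{2}=\tfrac{N-q}{q-1}>0$, $\lambda_{3}=-Z_{\ast}=(p-1)\big(\sigma-\tfrac{N-p}{p-1}\big)$ and $\lambda_{4}=N+b-\mu\sigma$, and one checks that $\lambda_{4}>0$ is exactly the integrability condition $\xi_{a,b}>\tfrac{N-q}{q-1}$ (so that the source $|x|^{b}u^{\mu}$ is locally integrable and the $W$-direction is unstable), while $\gamma_{a,b}>\tfrac{N-p}{p-1}$ is the non-removability condition of Theorem \ref{remmix} needed for $u$ to be genuinely singular. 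Thus the unstable manifold of $Q_{0}$ has dimension at least $2$, it meets the invariant set $\{W=0\}$ transversally, so it contains a one-parameter family of trajectories with $W\neq 0$ of a fixed sign converging to $Q_{0}$; (\ref{form}) then gives $\rho^{(N-q)/(q-1)}v\to\beta$ and $\rho^{\sigma}u\to\beta(\alpha)$, and the scaling invariance $r\mapsto\lambda r$ of $(\Sigma)$ produces the whole family parametrised by $\alpha>0$.

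The step I expect to be the real obstacle is not the dimension count but checking that the trajectories singled out above remain, for all $t\le t_{0}$, in the region where the formulas (\ref{form}) actually define nonnegative solutions of $(M_{w})$: one must verify that along each chosen trajectory the signs of $X,Y,Z,W$ never change, that the quantities $|X|^{p-1}Z$ and $|Y|^{q-1}W$ stay positive, and that the approach to the fixed point is at the linear rate so that the exponents in (\ref{form}) are exactly the claimed ones. As in \cite{BG} and \cite{BGY3}, this is carried out by a local analysis of the flow along the unstable directions near $Q_{0}$ (resp. $A_{0}$), combined with an invariant-region argument that propagates the correct signs away from the fixed point.
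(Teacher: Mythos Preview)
Your approach is the same as the paper's: realise the solutions as trajectories of $(\Sigma)$ converging to a fixed point and read off the rates from (\ref{form}). For (ii) your argument is identical to the paper's, which simply invokes Proposition~\ref{singabs}(ii) at $A_{0}$.

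For (i) you also land on the right fixed point: the paper describes it in one line as ``the fixed point $Q_{0}$ deduced from $P_{0}$ by symmetry'', and your explicit $Q_{0}=(\sigma,\frac{N-q}{q-1},Z_{\ast},0)$ and eigenvalues $\lambda_{1}=\sigma$, $\lambda_{2}=\frac{N-q}{q-1}$, $\lambda_{3}=-Z_{\ast}$, $\lambda_{4}=\frac{D}{p-1}(\xi_{a,b}-\frac{N-q}{q-1})$ are the symmetric counterparts of those at $P_{0}$ in Proposition~\ref{singabs}(iii). The one real difference is how the hypotheses are used. The paper does not stop at ``$\dim\mathcal V_{u}\ge 2$'': it asserts that $\gamma_{a,b}>\frac{N-p}{p-1}$ together with $\xi_{a,b}>\frac{N-q}{q-1}$ implies $\delta>\frac{(N+a)(q-1)}{N-q}$, which forces $\lambda_{1}>0$ and $\lambda_{3}>0$ as well, so that $Q_{0}$ is a source and one is exactly in the situation symmetric to the first case of Proposition~\ref{singabs}(iii); the required sign pattern $X,Y,Z<0<W$ then follows immediately. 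In your write-up the hypothesis $\gamma_{a,b}>\frac{N-p}{p-1}$ is left as a ``non-removability condition'' with no role in the eigenvalue count; you should tie it to the signs of $\lambda_{1},\lambda_{3}$ (equivalently of $X_{\ast},Z_{\ast}$), since otherwise your transversality argument alone does not guarantee that the selected trajectories lie in the octant compatible with $(M_{w})$.
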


\begin{proof}
(i) These solutions correspond to the fixed point $Q_{0}$ deduced from $P_{0}$
by symmetry, and our assumptions imply $\delta>\frac{(N+a)(q-1)}{N-q},$ hence
there exist trajectories, such that $X,Y,Z<0<W$ converging to $Q_{0}.\medskip$

\noindent(ii) The conclusion follows as in Proposition \ref{singabs},
(ii).$\medskip$
\end{proof}

We refer to \cite{BG} and \cite{BG2} for a description of all the (various)
possible behaviours of the solutions in the case $p=q=2.\medskip$

\textbf{Acknowledgments} The authors thank the anonymous referees for their
relevant remarks and suggestions which have improved the final form of the manuscript.

The first author was supported by Fondecyt 1110268 and Ecos-Conicyt C08E04.
The second and the third authors were supported by Fondecyt 1110003 and
1110268, as well as Ecos-Conicyt C08E04.

\end{document}